\documentclass{article}
\usepackage{makecell}

\PassOptionsToPackage{numbers,compress,sort}{natbib}

\usepackage[preprint]{neurips_2026}
\newcommand{\red}{\color{red}}

\usepackage{amsmath,amsfonts,bm}

\def\1{\bm{1}}

\def\vg{{\bm{g}}}

\def\vv{{\bm{v}}}

\def\mA{{\bm{A}}}
\def\mB{{\bm{B}}}
\def\mC{{\bm{C}}}
\def\mD{{\bm{D}}}

\def\mF{{\bm{F}}}
\def\mG{{\bm{G}}}

\def\mI{{\bm{I}}}

\def\mM{{\bm{M}}}

\def\mO{{\bm{O}}}
\def\mP{{\bm{P}}}
\def\mQ{{\bm{Q}}}

\def\mS{{\bm{S}}}

\def\mU{{\bm{U}}}
\def\mV{{\bm{V}}}
\def\mW{{\bm{W}}}
\def\mX{{\bm{X}}}
\def\mY{{\bm{Y}}}
\def\mZ{{\bm{Z}}}

\DeclareMathAlphabet{\mathsfit}{\encodingdefault}{\sfdefault}{m}{sl}
\SetMathAlphabet{\mathsfit}{bold}{\encodingdefault}{\sfdefault}{bx}{n}

\def\gC{{\mathcal{C}}}

\def\gN{{\mathcal{N}}}
\def\gO{{\mathcal{O}}}
\def\gP{{\mathcal{P}}}

\def\gT{{\mathcal{T}}}

\newcommand{\R}{\mathbb{R}}

\usepackage{amsmath}
\usepackage{amssymb}
\usepackage{mathtools}
\usepackage{amsthm}
\usepackage{thmtools,thm-restate}
\usepackage{nicefrac}
\usepackage{multirow}
\usepackage{tikz-cd} 
\usepackage{subcaption}
\usepackage{multicol,caption}

\usepackage{wrapfig}
\usepackage{algorithm}

\usepackage[flushleft]{threeparttable} 
\usepackage{colortbl}
\definecolor{bgcolor}{rgb}{0.8,1,1}
\definecolor{bgcolor2}{rgb}{0.8,1,0.8}
\definecolor{niceblue}{rgb}{0.0,0.19,0.56}

\usepackage{hyperref}
\hypersetup{colorlinks,linkcolor={blue},citecolor={blue},urlcolor={blue}}

\usepackage{pifont}
\definecolor{PineGreen}{RGB}{0,110,51}
\definecolor{BrickRed}{RGB}{143,20,2}
\newcommand{\cmark}{{\color{PineGreen}\ding{51}}}%
\newcommand{\xmark}{{\color{BrickRed}\ding{55}}}%

\usepackage{thmtools}

\newcommand{\Exp}[1]{\mathbb{E} \left[ #1\right]}

\definecolor{shadecolor}{gray}{0.9}
\declaretheoremstyle[
headfont=\normalfont\bfseries,
notefont=\mdseries, notebraces={(}{)},
bodyfont=\normalfont,
postheadspace=0.5em,
spaceabove=1pt,
mdframed={
  skipabove=8pt,
  skipbelow=8pt,
  hidealllines=true,
  backgroundcolor={shadecolor},
  innerleftmargin=4pt,
  innerrightmargin=4pt,
  innertopmargin=10pt,    
  innerbottommargin=6pt
}
]{shaded}

\declaretheorem[style=shaded,within=section]{definition}
\declaretheorem[style=shaded,sibling=definition]{theorem}

\declaretheorem[style=shaded,sibling=definition]{assumption}

\declaretheorem[style=shaded,sibling=definition]{lemma}

\usepackage{xspace}
\usepackage{xspace}

\newcommand{\algname}[1]{{\sf  #1}\xspace}

\newcommand{\la}{\left\langle}
\newcommand{\ra}{\right\rangle}

\newcommand{\eqdef}{:=}

\usepackage{hyperref}
\hypersetup{colorlinks,linkcolor={blue},citecolor={blue},urlcolor={blue}}

\usepackage{natbib}
\usepackage{sidecap}

\usepackage[utf8]{inputenc} 
\usepackage[T1]{fontenc}    
\usepackage{hyperref}       
\usepackage{url}            
\usepackage{booktabs}       
\usepackage{amsfonts}       
\usepackage{nicefrac}       
\usepackage{microtype}      
\usepackage{xcolor}         
\usepackage{algorithm}
\usepackage{algorithmic}
\usepackage{xargs}
\usepackage[colorinlistoftodos,prependcaption,textsize=tiny]{todonotes}
\newcommandx{\unsure}[2][1=]{\todo[linecolor=red,backgroundcolor=red!25,bordercolor=red,#1]{#2}}
\newcommandx{\change}[2][1=]{\todo[linecolor=blue,backgroundcolor=blue!25,bordercolor=blue,#1]{#2}}
\newcommandx{\info}[2][1=]{\todo[linecolor=OliveGreen,backgroundcolor=OliveGreen!25,bordercolor=OliveGreen,#1]{#2}}
\newcommandx{\improvement}[2][1=]{\todo[linecolor=Plum,backgroundcolor=Plum!25,bordercolor=Plum,#1]{#2}}

\allowdisplaybreaks[1]
\mathtoolsset{showonlyrefs=true}

\usepackage{mdframed}
\usepackage{etoolbox}
\newmdenv[
skipabove=0pt,
skipbelow=0pt,
hidealllines=true,
backgroundcolor=shadecolor,
innerleftmargin=2pt,
innerrightmargin=2pt,
innertopmargin=3.5pt,
innerbottommargin=3.5pt
]{shadedbox}

\usepackage[most]{tcolorbox}

\definecolor{phasepink}{RGB}{252,236,240}
\definecolor{phasegreen}{RGB}{235,246,236}

\newtcolorbox{phasebox}[2]{%
  enhanced,
  frame hidden,
  boxrule=0pt,
  sharp corners,
  colback=#1,
  left=1mm,
  right=1mm,
  top=0.7mm,
  bottom=0.7mm,
  boxsep=0pt,
  width=\dimexpr\linewidth-2.2cm\relax,
  before skip=2pt,
  after skip=2pt,
  overlay={
    \node[
      anchor=west,
      font=\footnotesize\bfseries,
      align=center,
      text width=1.8cm
    ] at ([xshift=6pt]frame.east) {#2};
  }
}

\usepackage{tabularx}
\usepackage{threeparttable}
\usepackage[table]{xcolor}
\usepackage{pifont}

\newcolumntype{L}[1]{>{\raggedright\arraybackslash}p{#1}}
\newcolumntype{C}[1]{>{\centering\arraybackslash}p{#1}}
\newcolumntype{Y}{>{\centering\arraybackslash}X}

\title{Muon with Nesterov Momentum: Heavy-Tailed Noise and (Randomized) Inexact Polar Decomposition}

\newcommand{\affmark}[1]{\textsuperscript{#1}}

\author{%
Sayantan Choudhury\affmark{1} \quad
Xiaoran Cheng\affmark{2} \quad
Martin Takáč\affmark{1} \quad
Sen Na\affmark{3} \quad
Mladen Kolar\affmark{1,4}
\\[0.5em]
{\normalfont\small
\affmark{1} MBZUAI \quad
\affmark{2} Penn State University \quad
\affmark{3} Georgia Institute of Technology \quad
\affmark{4} University of Southern California
}
}

\begin{document}

\maketitle

\begin{abstract}
Most first-order optimizers treat matrix-valued parameters as vectors, ignoring the intrinsic geometry of hidden-layer weights in neural networks. \algname{Muon} addresses this mismatch by updating along the polar factor of a momentum matrix, but its theoretical understanding has lagged behind practice. In particular, practical implementations incorporate Nesterov momentum, compute the polar factor only approximately, and operate with stochastic gradients that may be heavy-tailed. We close this gap by developing a convergence theory for \algname{Muon} with Nesterov momentum and inexact polar decomposition in non-convex matrix optimization under heavy-tailed noise. Our analysis builds on a unified framework for inexact polar decomposition that captures practical iterative approximations such as Newton-Schulz and quantifies how their errors propagate through the optimization dynamics. Under this framework, we establish an optimal iteration and sample complexity of $\gO (\varepsilon^{\nicefrac{-(3\alpha-2)}{(\alpha-1)}})$ for finding an $\varepsilon$-stationary point, where $\alpha\in(1,2]$~denotes the heavy-tail index. For the inexact-polar setting with $\sigma_1=0$, we also provide guarantees that do not require prior knowledge of $\alpha$. We analyze a randomized low-rank polar decomposition that is substantially more efficient than full-space methods while remaining compatible with our theory. Numerical experiments further demonstrate the effectiveness of the proposed inexact and randomized variants.
\end{abstract}

\section{Introduction}

Most widely used first-order optimizers \citep{kingma2014adam,hinton2012neural,loshchilov2017decoupled,nguyen2017sarah,johnson2013accelerating} still treat matrix-valued parameters as vectors, even though the weights of hidden linear layers are inherently structured matrices \citep{sato2025convergence}. Recent norm-based perspectives suggest that this geometric choice is consequential, and that different tensor structures may benefit from different update norms \citep{bernstein2024old}.
\algname{Muon}, introduced by \cite{jordan2024muon}, is a compelling realization of this idea: it replaces the raw momentum matrix with its orthogonalized polar factor before applying the update. For solving the optimization problem $\min_{\mX \in \R^{m \times n}} f(\mX)$, the update rule of \algname{Muon} with Nesterov momentum is given by: 
\begin{equation}\label{eq:muon_inexact}
\mG_k =  \frac{1}{B}\sum_{i = 1}^B \mG_k^i, \;\;\; \mC_k = \beta \mC_{k - 1} + \mG_k, \;\;\; \mM_k = \beta \mC_k + \mG_k, \;\;\; \mX_{k+1} =  \mX_k - \eta \gT(\mM_k),
\end{equation}
where $\beta \in (0,1)$ and $\eta > 0$ are the momentum parameter and step size, respectively, $B$ is the batch size, $\mG^i_k$ is an estimate of the gradient $\nabla f(\mX_k)$, and $\gT(\mM_k)$ is an approximation to the polar factor of $\mM_k$. When computed exactly, $\gT(\mM_k)=\mU_k\mV_k^\top$ for the singular value decomposition $\mM_k=\mU_k\bm{\Sigma}_k\mV_k^\top$, so \algname{Muon} replaces the raw momentum direction with its orthogonalized counterpart.

This orthogonalization step is central to the method, but it also constitutes the primary computational bottleneck. For large models, computing the exact polar factor of $\mM_k$ is often prohibitively expensive. Consequently, practical implementations rely on a small number of iterations of fast matrix polynomial methods, such as Newton-Schulz \citep{higham1986computing,jordan2024muon} and PolarExpress \citep{amsel2025polar}, to obtain $\gT(\mM_k)$, an efficient approximation of the polar factor. This discrepancy creates a noticeable gap between theory and practice. Existing analyses of \algname{Muon} typically focus on either exact polar decomposition or Polyak-momentum variants \citep{shen2025convergence, chang2025convergence, sato2025convergence, shulgin2025beyond}. In contrast, the default PyTorch implementation employs Nesterov momentum together with an inexact polar decomposition \citep{pytorch_muon_doc, pytorch2019}. This practical variant is already being deployed at scale: \citet{liu2025muonscalablellmtraining} trained Moonlight, 3B and 16B parameter language models, using \algname{Muon} and reported roughly $2\times$ computational efficiency gain over \algname{AdamW}. More recently, \algname{Muon} with Nesterov momentum and inexact polar decomposition has been used to train one of the largest open-source language models, DeepSeek-V4 \citep{deepseekai2026deepseekv4}.

A second mismatch between existing theory and modern practice lies in the assumptions on the stochastic gradients $\mG_k^i$. Most optimization analyses assume bounded variance \citep{lan2020first,takavc2013mini}, while empirical evidence suggests that stochastic gradients in modern deep networks can exhibit heavy-tailed noise \citep{simsekli2019tail, battash2024revisiting}. Consequently, a realistic theory of \algname{Muon} should simultaneously account for three ingredients: Nesterov momentum, inexact polar decomposition, and heavy-tailed stochastic gradients.

Our goal in this paper is to provide precisely such a theory for non-convex matrix optimization.~We develop a unified framework for analyzing inexact polar decomposition (Assumption \ref{assume:general_inexact}), and establish convergence guarantees (Theorem \ref{theorem:inexact}) for \algname{Muon} with Nesterov momentum under heavy-tailed noise (Assumption \ref{assume:general_bounded_moment}). In Table \ref{tab:comparison}, we provide a detailed comparison of our analysis with existing prior works on \algname{Muon}.

\begin{table*}[ht]
\centering
\small
\setlength{\tabcolsep}{7pt}
\renewcommand{\arraystretch}{1.5}
\caption{\small 
Comparisons of theory of \algname{Muon} across prior works. In the fourth column, $\sigma_1\geq 0$ and $\alpha\in(1,2]$ denote the heavy tail parameters in Assumption \ref{assume:general_bounded_moment}. The classical bounded-variance regime corresponds to $\sigma_1=0$ and $\alpha=2$. The convergence rate in the last column is measured with respect to $\min_{0 \leq k \leq K} \Exp{\left\| \nabla f(\mX_k) \right\|_F}$.}
\label{tab:comparison}
\begin{tabular}{lcccc}
\toprule
\makecell{\textbf{Paper}} & \makecell{\textbf{Nesterov} \\ \textbf{Momentum}} & \makecell{\textbf{Inexact Polar} \\ \textbf{Decomposition}} & \makecell{\textbf{Heavy Tailed} \\ \textbf{Noise?}} & \makecell{\textbf{Convergence} \\ \textbf{Rate}}\\
\midrule
\citet{riabinin2025gluon}  & \xmark & \xmark & $\sigma_1 = 0, \alpha = 2$ & $\gO \left( \nicefrac{1}{K^{\nicefrac{1}{4}}}\right)$ \\
\citet{kovalev2025understanding}  & \xmark & \xmark & $\sigma_1 = 0, \alpha = 2$ & $\gO \left( \nicefrac{1}{K^{\nicefrac{1}{4}}}\right)$ \\
\citet{li2025note}           & \xmark & \xmark & $\sigma_1 = 0, \alpha = 2$ & $\gO \left( \nicefrac{1}{K^{\nicefrac{1}{4}}}\right)$ \\
\citet{shen2025convergence}  & \xmark & \xmark & $\sigma_1 = 0, \alpha = 2$ & $\gO \left( \nicefrac{1}{K^{\nicefrac{1}{4}}}\right)$ \\
\citet{sato2025convergence}  & \cmark & \xmark & $\sigma_1 = 0, \alpha = 2$ & --- \\
\citet{chang2025convergence} & \cmark & \xmark & $\sigma_1 = 0, \alpha = 2$ & $\gO \left( \nicefrac{\log K}{K^{\nicefrac{1}{4}}}\right)$ \\
\citet{shulgin2025beyond}    & \xmark & \cmark & $\sigma_1 = 0, \alpha = 2$ & $\gO \left( \nicefrac{1}{K^{\nicefrac{1}{4}}}\right)$ \\
\citet{he2025low} & \xmark & \cmark & $\sigma_1 = 0, \alpha \in (1, 2]$ & $\gO \left( \nicefrac{\log K}{K^{\nicefrac{(\alpha - 1)}{(3 \alpha - 2)}}} \right)$ \\
\cellcolor{phasegreen}\textbf{Ours} (Theorem \ref{theorem:inexact})& \cellcolor{phasegreen} \cmark & \cellcolor{phasegreen} \cmark & \cellcolor{phasegreen} $\sigma_1 \geq 0, \alpha \in (1, 2]$ & \cellcolor{phasegreen} $\gO \left( \nicefrac{1}{K^{\nicefrac{(\alpha - 1)}{(3 \alpha - 2)}}} \right)$\\
\bottomrule	
\end{tabular}
\end{table*}            
   
Finally, we analyze a \textbf{randomized} low-rank polar decomposition procedure (Algorithm \ref{alg:rand_ns_update}) that is significantly more efficient than full-space polar decomposition methods while remaining fully compatible with our theoretical framework. This algorithm exploits the widely observed phenomenon that the parameters of modern neural networks are often approximately low rank \citep{hao2024flora, hu2022lora, he2025low, zhao2024galore}.~A~related deterministic low-rank polar decomposition method was recently proposed \cite{he2025low}; however, their analysis relies on a stopping criterion involving the computationally expensive quantity $\|\mM_k - \gT(\mM_k)\|_*$.\footnote{$\|\cdot\|_*$ denotes the nuclear norm of a matrix.} In contrast, our analysis avoids this requirement altogether, thereby removing a key computational bottleneck and making the resulting method considerably more practical for large-scale settings.

\textbf{Main Contributions.} We summarize the main contributions of the paper below. 
\begin{itemize}
\item \textit{Heavy-tailed noise analysis for non-convex matrix optimization.} 
We provide a convergence analysis under relaxed assumptions on the stochastic gradients (Assumption~\ref{assume:general_bounded_moment}), allowing for heavy-tailed noise beyond standard bounded-variance settings. This enables us to capture the behavior of \algname{Muon} in modern deep learning applications. To our knowledge, this is the first work that accommodates the regime $\sigma_1 > 0$ in Assumption \ref{assume:general_bounded_moment} for matrix optimization.

\item \textit{Optimal convergence guarantee.} 
We establish optimal sample (same as iteration) complexity for \algname{Muon} with inexact polar decomposition in solving non-convex optimization problems under heavy-tailed noise. In particular, Theorem \ref{theorem:inexact} implies an $\varepsilon$-stationarity complexity of $\gO ( \varepsilon^{\nicefrac{-(3\alpha - 2)}{(\alpha - 1)}} )$. To our knowledge, this is the first work that achieves optimal sample complexity for \algname{Muon} with Nesterov momentum, even under bounded-variance assumption.

\item \textit{Unified inexact polar decomposition framework.} 
We develop a unified framework for inexact polar decomposition via Assumption~\ref{assume:general_inexact}, under which the Newton-Schulz iteration arises as a special case (see Proposition \ref{prop:ns}). This framework enables us to precisely quantify how approximation errors propagate through the optimization dynamics (see Theorem \ref{theorem:inexact}).

\item \textit{Adaptivity to unknown tail index $\alpha\in(1,2]$.} 
For the inexact-polar setting with $\sigma_1 = 0$ in Assumption \ref{assume:general_bounded_moment}, we establish convergence in Corollary \ref{corollary:inexact_without_alpha} without requiring prior knowledge of the tail parameter $\alpha$, yielding an adaptive method that is practically applicable~when the distributional properties of stochastic gradients are unknown.

\item \textit{Randomized polar decomposition.} 
Inspired by \cite{halko2011finding}, we analyze a randomized variant of polar decomposition (Algorithm \ref{alg:rand_ns_update}) that enables efficient low-rank implementations. We show that it satisfies Assumption \ref{assume:general_inexact}; consequently, Theorem \ref{theorem:inexact} provides rigorous convergence guarantees for this randomized variant.

\item \textit{Numerical experiments.} 
We complement our theory on two standard benchmarks in Section~\ref{sec:experiment}: nanoGPT pretraining \cite{Penedo2024Fineweb, Jordan2024Modded} and CNN training on CIFAR-10 \cite{Krizhevsky2009Learning}. Randomized \algname{Muon} performs comparably to the full-space \algname{Muon} while reducing per-step optimizer FLOPs by roughly $8.5\times$ on nanoGPT ($2136\to 251$~GFLOPs) and $1.8\times$ on CIFAR-10 ($14.80\to 8.08$~GFLOPs). Additionally, replacing the dense Gaussian sketch with a coordinate (Kaczmarz) sketch yields a further reduction in both settings.
\end{itemize}        

\textbf{Related Works.} 
Recent work has increasingly focused on geometry-aware and structure-exploiting optimization methods. For instance, \algname{Muon} \citep{jordan2024muon} orthogonalizes matrix-valued momentum using an approximate polar factor for hidden-layer updates. This perspective is naturally extended by the norm-constrained LMO framework underlying \algname{Scion} \citep{pethick2025training}. Building on these insights, \algname{Gluon} \citep{riabinin2025gluon} unifies \algname{Muon} and \algname{Scion}, effectively bridging the gap between theoretical guarantees and layer-wise practical implementation. 
Alongside these methods, second-order approaches such as \algname{Shampoo}~\citep{gupta2018shampoo} exploit tensor-structured preconditioning, which \algname{SOAP} \citep{vyas2024soap} further refines by incorporating Adam-style updates within \algname{Shampoo}'s eigenbasis to improve stability and efficiency. Taking a fundamentally different approach, \algname{Lion} \citep{chen2023symbolic} employs a sign-based momentum discovered via symbolic search, deliberately prioritizing memory efficiency over complex matrix geometry.

\section{(Randomized) Inexact Polar Decomposition}\label{section:motivation}

Given the SVD of the momentum matrix $\mM_k = \mU_k \bm{\Sigma}_k \mV_k^\top$, the exact polar factor is the solution to $\max_{\|\mO \|_{\mathrm{op}} \le 1} \langle \mM_k, \mO \rangle$, and the maximum is attained at $\mO = \mU_k \mV_k^\top$. However, computing this factor exactly is often prohibitively expensive in high-dimensional settings. Practical implementations, therefore, rely on iterative approximation schemes such as Newton-Schulz \citep{higham1986computing} and PolarExpress \citep{amsel2025polar}, which avoid an explicit SVD and instead repeatedly apply a polynomial map to a normalized matrix. Throughout this section, for an odd scalar polynomial $p$ and a rectangular matrix $\mZ=\mU\bm{\Sigma}\mV^\top$, the notation $p(\mZ)$ denotes the corresponding singular-value map $\mU p(\bm{\Sigma})\mV^\top$. Starting from $\mZ_0 = \mM_k / \delta$ with $\delta \geq \| \mM_k\|_{\rm op}$ (practitioners often use $\delta = \| \mM_k\|_F$), these methods generate iterates $\mZ_{t+1} = \varphi(\mZ_t), t = 0,1,\dots,q-1$, where $\varphi$ is chosen to drive the singular values toward $1$. Equivalently, if $p_0(x)=x$ and $p_{t+1}(x)=\varphi(p_t(x))$,~then~$\mZ_q = p_q(\mM_k/\delta)$ serves as a fast, hardware-friendly approximation of $\mU_k \mV_k^\top$. Two standard choices of polynomials are:
\begin{itemize}
\item Newton-Schulz with a degree-$3$ polynomial: $\varphi(x) = \frac{1}{2}(3x - x^3)$,
\item Newton-Schulz with a degree-$5$ polynomial: $\varphi(x) = \frac{1}{8}(15x - 10x^3 + 3x^5)$.
\end{itemize}
For a fixed polynomial $\varphi$, \algname{Muon} implements $\gT(\mM_k) = p_q \left(\nicefrac{\mM_k}{\delta}\right)$ with a small number of iterations, typically $q$ in $5-10$.

The above characterization suggests that a useful inexact polar map should satisfy two properties: it should remain nearly feasible in operator norm, and it should retain most of the optimal alignment with $\mM_k$. This motivates the following assumption.

\begin{assumption}\label{assume:general_inexact}
We assume there exist $\gamma_k \in [0, 1)$ and $\nu_k \ge 0$ such that
\begin{equation*}
\Exp{ \langle \mM_k, \gT(\mM_k) \rangle \mid \mM_k} \geq (1 - \gamma_k) \| \mM_k\|_*\quad\quad \text{and}\quad\quad \Exp{\| \gT(\mM_k)\|_{\text{op}}^2 \mid \mM_k} \leq (1 + \nu_k)^2.	
\end{equation*}
\end{assumption}
\vspace{-2mm}

Assumption~\ref{assume:general_inexact} recovers the exact polar decomposition $\gT(\mM_k)=\mU_k\mV_k^\top$ with $\gamma_k=\nu_k=0$.~Indeed, $\|\gT(\mM_k)\|_{\mathrm{op}} = \|\mU_k\mV_k^\top\|_{\mathrm{op}} = 1$, and
\begin{equation*}
\langle \mM_k, \gT(\mM_k)\rangle
= \langle \mM_k, \mU_k\mV_k^\top\rangle
= \mathrm{tr}\!\left(\mV_k \bm{\Sigma}_k \mU_k^\top \mU_k \mV_k^\top\right)
= \mathrm{tr}(\bm{\Sigma}_k)
= \|\mM_k\|_*.	
\end{equation*}
Moreover, the approximation condition $\|\gP(\mM_k)-\gT(\mM_k)\|_{\mathrm{op}} \le \varepsilon_k$ with $\varepsilon_k<1$ (where $\gP(\mM_k) \eqdef \mU_k \mV_k^\top$) from \cite{shulgin2025beyond} is a special case of Assumption~\ref{assume:general_inexact}. Indeed, we note that
\begin{align*}
\langle \mM_k, \gT(\mM_k)\rangle
&= \langle \mM_k, \gP(\mM_k)\rangle - \langle \mM_k, \gP(\mM_k)-\gT(\mM_k)\rangle \\
&\ge \|\mM_k\|_* - \|\mM_k\|_* \|\gP(\mM_k)-\gT(\mM_k)\|_{\mathrm{op}} \ge (1-\varepsilon_k)\|\mM_k\|_*,	
\end{align*}
while the triangle inequality gives $\|\gT(\mM_k)\|_{\mathrm{op}} \le \|\gP(\mM_k)\|_{\mathrm{op}} + \|\gT(\mM_k)-\gP(\mM_k)\|_{\mathrm{op}} \le 1+\varepsilon_k$. Beyond these assumptions, we show that the few-step Newton-Schulz updates~\citep{higham1986computing} discussed above also fall within this framework.    

\begin{restatable}{proposition}{propns}
\label{prop:ns}
Suppose $\varphi(x) = \frac{1}{2}(3x-x^3)$ or $\varphi(x) = \frac{1}{8}(15x-10x^3+3x^5)$. Let $p_0(x)=x$ and $p_{t+1}(x)=\varphi(p_t(x))$ for $t=0,1,\dots,q-1$. Let $\mM=\mU\bm{\Sigma}\mV^\top$ be a compact SVD of a rank-$r$ matrix with singular values $\sigma_1,\dots,\sigma_r>0$, and let $\delta\geq \|\mM\|_{\mathrm{op}}$. Define
\begin{equation*}
\gT \left( \mM \right) \eqdef p_q \left(\nicefrac{\mM}{\delta}\right) = \mU \operatorname{diag} \left( p_q \left( \nicefrac{\sigma_1}{\delta} \right), \cdots, p_q \left( \nicefrac{\sigma_r}{\delta}\right) \right) \mV^\top.	
\end{equation*}
Then, $\gT(\mM)$ satisfies Assumption~\ref{assume:general_inexact} with
\begin{equation*}
\nu = 0,
\qquad
\gamma = 1 - \frac{\sum_{i=1}^r \sigma_i p_q \left(\nicefrac{\sigma_i}{\delta}\right)}{\sum_{i=1}^r \sigma_i}.
\end{equation*}		
\end{restatable}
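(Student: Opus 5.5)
The map $\gT$ is deterministic, so both conditional expectations in Assumption~\ref{assume:general_inexact} reduce to the quantities themselves. The proof will therefore be a scalar analysis of the polynomial $p_q$ on $[0,1]$.

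The key lemma is that $\varphi$ maps $[0,1]$ into $[0,1]$ and satisfies $\varphi(x)\ge x$ there. For the cubic, $\varphi(x)-x=\tfrac12 x(1-x^2)\ge 0$ and $\varphi'(x)=\tfrac32(1-x^2)\ge 0$, so $\varphi$ is nondecreasing on $[0,1]$. Hence $0=\varphi(0)\le\varphi(x)\le\varphi(1)=1$.

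For the quintic, $\varphi'(x)=\tfrac{15}{8}(1-x^2)^2\ge 0$, so again $\varphi$ is monotone with $\varphi(0)=0$ and $\varphi(1)=1$. Also
\[
\varphi(x)-x=\tfrac18 x\,(7-10x^2+3x^4)=\tfrac18 x\,(1-x^2)(7-3x^2)\ge 0
\]
on $[0,1]$. By induction on $t$ we then get $p_t([0,1])\subseteq[0,1]$ and $p_t(x)\ge x\ge 0$. Since $\delta\ge\|\mM\|_{\mathrm{op}}=\max_i\sigma_i$, every $\sigma_i/\delta$ lies in $(0,1]$, so $p_q(\sigma_i/\delta)\in(0,1]$.

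This positivity is what justifies the displayed formula $\gT(\mM)=\mU\,\mathrm{diag}(p_q(\sigma_i/\delta))\mV^\top$ as a genuine SVD-type expression. Since $p_q$ is an odd polynomial, $p_q(\mM/\delta)$ computed by matrix products equals $\mU p_q(\bm\Sigma/\delta)\mV^\top$; this is the singular-value map convention stated earlier in the paper.

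\textbf{Operator norm.} Because $\mU,\mV$ have orthonormal columns and the diagonal entries are nonnegative and at most $1$, we get $\|\gT(\mM)\|_{\mathrm{op}}=\max_i p_q(\sigma_i/\delta)\le 1$. This gives $\nu=0$.

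\textbf{Alignment.} We compute
\[
\langle \mM,\gT(\mM)\rangle=\mathrm{tr}\bigl(\mV\bm\Sigma\mU^\top\mU\,\mathrm{diag}(p_q(\sigma_i/\delta))\mV^\top\bigr)=\sum_i\sigma_i\,p_q(\sigma_i/\delta).
\]
This equals $(1-\gamma)\|\mM\|_*$ by the definition of $\gamma$, since $\|\mM\|_*=\sum_i\sigma_i$. It remains to check $\gamma\in[0,1)$. Since $0<p_q(\sigma_i/\delta)\le 1$, the ratio $\sum_i\sigma_i p_q(\sigma_i/\delta)\big/\sum_i\sigma_i$ lies in $(0,1]$, so $\gamma\in[0,1)$.

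The strict positivity uses $p_q(x)\ge x>0$. If $\mM=0$ (so $r=0$), both conditions hold trivially for any $\gamma$.

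The only nontrivial step is the invariance and monotonicity of $\varphi$ on $[0,1]$, in particular the factorization for the quintic. Everything else is routine trace computation.
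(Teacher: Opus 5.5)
Your proof is correct and follows the paper's argument essentially step for step: scalar invariance $\varphi([0,1])\subseteq[0,1]$ with $\varphi(x)\ge x$, induction to get $0<p_q(\sigma_i/\delta)\le 1$, and from there $\|\gT(\mM)\|_{\mathrm{op}}\le 1$ together with the trace identity for the alignment. The differences are cosmetic: you get $\varphi\le 1$ from monotonicity and $\varphi(1)=1$ rather than from the paper's factorizations of $1-\varphi(x)$, and you use the odd-polynomial singular-value-map convention instead of the paper's explicit induction $\mZ_t=\mU\operatorname{diag}(p_t(s_i))\mV^\top$.
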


The proposition above shows that Assumption~\ref{assume:general_inexact} captures the deterministic inexact polar decompositions used by \algname{Muon}. However, when the matrices are large, even a small number of Newton-Schulz steps requires several dense matrix multiplications in the full ambient space. To reduce this cost, we propose a randomized lifted variant that first compresses $\mM$ into a low-dimensional subspace and then performs the inexact polar computation in that reduced space.
\vspace{-2mm}
\begin{algorithm}[ht]
\caption{Lifted Randomized Polar Decomposition}\label{alg:rand_ns_update}
\begin{algorithmic}[1]
\REQUIRE Matrix $\mM \in \mathbb{R}^{m \times n}$ with rank $r$, target rank $1 \leq s < r$, oversampling parameter $p \ge 2$ with $\ell:=s+p\le \min(m,n)$, power iteration $h \ge 0$, Newton-Schulz steps $q \ge 0$ and scaling parameter $\delta\geq \|\mM\|_{\mathrm{op}}$.
\begin{phasebox}{phasepink}{Project Down}	
\STATE Draw a Gaussian sketch $\bm\Omega \sim \mathcal{N}(0,1)^{n \times \ell}$;
\STATE Form $\mY := (\mM \mM^\top)^h \mM \bm\Omega$, and compute orthonormal basis $\mQ := \operatorname{orth}(\mY)$;
\STATE Set $\mB := \mQ^\top \mM \in \mathbb{R}^{\ell \times n}$;
\end{phasebox}
\begin{phasebox}{phasegreen}{Iterative \\ Approximation}
\STATE Set $\mZ_0 := \mB/\delta$;
\FOR{$t = 0, 1,\dots,q-1$}
\STATE $ \mZ_{t+1} := \varphi \left( \mZ_t \right)$;
\ENDFOR
\end{phasebox}
\begin{phasebox}{phasepink}{Project Up}
\STATE \textbf{Return} $\gT_{h,q}( \mM; \bm\Omega) := \mQ \mZ_q$.
\end{phasebox}
\end{algorithmic}			
\end{algorithm}
\vspace{-2mm}

Algorithm \ref{alg:rand_ns_update} draws a Gaussian sketch $\bm\Omega \in \mathbb{R}^{n \times \ell}$ with $\ell = s+p$ and forms $\mY := (\mM\mM^\top)^h \mM \bm\Omega$ to capture the dominant column space. Then, an orthonormal basis $\mQ := \operatorname{orth}(\mY)$ is computed via a QR decomposition and used to project $\mM$ onto the reduced matrix $\mB := \mQ^\top \mM \in \mathbb{R}^{\ell \times n}$. We then compute an inexact polar decomposition of $\mB$, and lift the result back via multiplication with $\mQ$. The randomness of the method is induced solely by the Gaussian sketch $\bm\Omega$, while its computational efficiency arises from performing the inexact polar decomposition in the reduced $\ell$-dimensional subspace. Our approach builds on the randomized sketching framework of \cite{halko2011finding}. As in this work, Algorithm~\ref{alg:rand_ns_update} first constructs a low-dimensional sketch of the range of the input matrix. The key difference is that we avoid computing an SVD of the compressed matrix; instead, we obtain an approximate polar factor through inexpensive iterative Newton-Schulz updates.

The compression induced by the sketching matrix yields a significant computational advantage. Compared with applying Newton-Schulz directly in the full space, which for $\mM \in \R^{m \times n}$ incurs a cost of $\gO \left(q(4d_1d_0^{2}+2d_0^{3})\right)$ with $d_0:=\min(m,n)$ and $d_1:=\max(m,n)$, our method first compresses $\mM\in\R^{m\times n}$ into $\mB \in \R^{\ell \times n}$, where $\ell = s+p \ll d_0$, and then performs the same iteration only in this $\ell$-dimensional subspace. The resulting complexity becomes $\gO \left((4h+6)mn\ell + q(4n\ell^{2}+2\ell^{3}) \right)$, so the cubic dependence on $d_0$ is replaced by a cubic dependence on the much smaller sketch size $\ell$. In the square case $m=n=d$, this reduces the scaling from $\gO(qd^{3})$ to $\gO \left((4h+6)d^{2}\ell + q(4d\ell^{2}+2\ell^{3})\right)$. For instance, with $(d,\ell,q,h)=(4096,256,5,1)$, the factorization cost is reduced by roughly $40\times$.

The next proposition shows that the randomized lifted scheme (Algorithm \ref{alg:rand_ns_update}) remains spectrally bounded and preserves a controlled amount of alignment with the momentum matrix $\mM$.

\vspace{-0.15cm}
\begin{restatable}{proposition}{proprandomizedpolar}
\label{prop:randomized_polar}
Let $\mM \in \R^{m \times n}$ have rank $r$ with singular values
$\sigma_1 \ge \sigma_2 \ge \cdots \ge \sigma_r > 0$. Let $1 \leq s < r$,
let $p \ge 2$ and $\ell=s+p\le \min(m,n)$, and let $h,q$ be nonnegative
integers. Define $\varrho_s \eqdef \nicefrac{\sigma_{s + 1}}{\sigma_s} \in [0, 1]$.
For any $\delta\ge \|\mM\|_{\mathrm{op}}$, the output
$\gT_{h, q}(\mM; \bm\Omega)$ from Algorithm \ref{alg:rand_ns_update}, with
$\bm\Omega\sim\mathcal{N}(0,1)^{n\times \ell}$ and
$\varphi(x) = \frac{1}{2} (3x - x^3)$ or
$\varphi(x) = \frac{1}{8} (15x - 10x^3 + 3x^5)$, satisfies
\begin{equation}\label{eq:randomized_polar}
\textstyle
\mathbb{E}_{\bm\Omega} \left[\langle \mM, \gT_{h,q}( \mM; \bm\Omega )\rangle\right] \ge
\frac{1}{\delta}
\left[
\sum_{j=1}^s \sigma_j^2
-
\frac{s}{p-1} \varrho_s^{4h}\sum_{j > s}\sigma_j^2
\right]_+ . \tag{2}
\end{equation}
Moreover, almost surely,
$\left\| \gT_{h, q} \left( \mM; \bm\Omega \right) \right\|_{\mathrm{op}} \leq 1$.
\end{restatable}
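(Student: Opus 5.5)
The plan is to reduce both claims to scalar facts about $p_q$ on $[0,1]$, and then to import the Halko--Martinsson--Tropp expected error bound for the randomized range finder.

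\textbf{Operator-norm bound.} Since $\mQ$ has orthonormal columns, $\|\mB\|_{\mathrm{op}}=\|\mQ^\top\mM\|_{\mathrm{op}}\le\|\mM\|_{\mathrm{op}}\le\delta$. Hence the singular values of $\mZ_0=\mB/\delta$ lie in $[0,1]$. For both choices of $\varphi$, an elementary calculus check shows $\varphi$ maps $[0,1]$ into $[0,1]$ and satisfies $\varphi(x)\ge x$ there. For the cubic, $\varphi'(x)=\tfrac32(1-x^2)\ge0$ and $\varphi(1)=1$. For the quintic, $\varphi'(x)=\tfrac{15}{8}(1-x^2)^2\ge0$ and $\varphi(1)=1$. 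In both cases $\varphi(x)-x$ is nonnegative on $[0,1]$.

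By induction, $p_q$ maps $[0,1]$ into $[0,1]$ and $p_q(x)\ge x$. Writing $\mZ_0=\mU_B\bm\Sigma_B\mV_B^\top$, we get $\mZ_q=\mU_B\,p_q(\bm\Sigma_B)\mV_B^\top$, so $\|\mZ_q\|_{\mathrm{op}}\le1$. Therefore $\|\mQ\mZ_q\|_{\mathrm{op}}\le1$, and this holds almost surely.

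\textbf{Alignment.} Compute
\[
\langle\mM,\mQ\mZ_q\rangle=\langle\mQ^\top\mM,\mZ_q\rangle=\langle\mB,\mZ_q\rangle=\delta\sum_i\tau_i\,p_q(\tau_i),
\]
where $\tau_i=\sigma_i(\mB)/\delta\in[0,1]$. Using $p_q(\tau)\ge\tau$, this gives
\[
\langle\mM,\mQ\mZ_q\rangle\ge\delta\sum_i\tau_i^2=\frac{1}{\delta}\|\mB\|_F^2=\frac{1}{\delta}\|\mQ\mQ^\top\mM\|_F^2.
\]
By Pythagoras, $\|\mQ\mQ^\top\mM\|_F^2=\|\mM\|_F^2-\|(\mI-\mQ\mQ^\top)\mM\|_F^2$. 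Taking expectations,
\[
\mathbb{E}\langle\mM,\gT_{h,q}\rangle\ge\frac{1}{\delta}\Bigl(\|\mM\|_F^2-\mathbb{E}\|(\mI-\mQ\mQ^\top)\mM\|_F^2\Bigr).
\]
The left side is also nonnegative because each term $\tau_ip_q(\tau_i)\ge0$. This justifies the $[\cdot]_+$ in the final bound.

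\textbf{Range-finder error (main step).} It remains to show
\[
\mathbb{E}\|(\mI-\mQ\mQ^\top)\mM\|_F^2\le\sum_{j>s}\sigma_j^2+\frac{s}{p-1}\varrho_s^{4h}\sum_{j>s}\sigma_j^2.
\]
Subtracting this from $\|\mM\|_F^2=\sum_j\sigma_j^2$ yields exactly \eqref{eq:randomized_polar}.

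For this I follow Halko et al., Theorem 9.1. Split $\mV^\top\bm\Omega$ into blocks $\bm\Omega_1\in\R^{s\times\ell}$ and $\bm\Omega_2$. Then the deterministic bound applies to $\mM_h=(\mM\mM^\top)^h\mM$, whose singular values are $\sigma_j^{2h+1}$. The range of $\mY$ equals the range of $\mM_h\bm\Omega$.

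The Frobenius version of that bound, applied directly to $\mM$ with the power-iteration structure, gives
\[
\|(\mI-\mP_Y)\mM\|_F^2\le\|\bm\Sigma_2\|_F^2+\|\bm\Sigma_2^{2h+1}\bm\Omega_2\bm\Omega_1^\dagger\bm\Sigma_1^{-2h}\|_F^2.
\]
This is obtained by testing against the matrix $\mM_h\bm\Omega\bm\Omega_1^\dagger\bm\Sigma_1^{-2h-1}$, which lies in the range of $\mY$, and comparing. Now bound
\[
\|\bm\Sigma_2^{2h+1}\bm\Omega_2\bm\Omega_1^\dagger\bm\Sigma_1^{-2h}\|_F\le\sigma_{s+1}^{2h}\sigma_s^{-2h}\,\|\bm\Sigma_2\bm\Omega_2\bm\Omega_1^\dagger\|_F.
\]
Then take expectations using the independence of $\bm\Omega_1$ and $\bm\Omega_2$, the identity $\mathbb{E}\|\mS\bm\Omega_2\mT\|_F^2=\|\mS\|_F^2\|\mT\|_F^2$, and the Gaussian pseudoinverse moment $\mathbb{E}\|\bm\Omega_1^\dagger\|_F^2=s/(p-1)$. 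This produces the factor $\varrho_s^{4h}\frac{s}{p-1}\sum_{j>s}\sigma_j^2$.

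The delicate point is obtaining $\varrho_s^{4h}$ rather than a weaker power. This requires choosing the test matrix carefully, so that only the ratio $(\sigma_{s+1}/\sigma_s)^{2h}$ survives after normalizing by $\bm\Sigma_1^{-2h}$. It also requires handling the case where $\bm\Omega_1$ has full row rank, which holds almost surely.
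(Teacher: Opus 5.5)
Your proposal is correct and follows essentially the same route as the paper. The scalar facts $0\le p_q(x)\le 1$ and $p_q(x)\ge x$ on $[0,1]$ give both the operator-norm bound and $\langle \mM,\gT_{h,q}(\mM;\bm\Omega)\rangle\ge \|\mP_{\mQ}\mM\|_F^2/\delta$, and you obtain the range-finder estimate exactly as the paper's lemma does: test against $\mY\bm\Omega_1^\dagger\bm\Sigma_1^{-(2h+1)}$, extract $\varrho_s^{2h}\|\bm\Sigma_2\bm\Omega_2\bm\Omega_1^\dagger\|_F$, then apply $\mathbb{E}\|\bm\Omega_1^\dagger\|_F^2=s/(p-1)$. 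The remaining differences are cosmetic: you justify $[\cdot]_+$ by nonnegativity of the alignment rather than of $\|\mP_{\mQ}\mM\|_F^2$, and your passing remark about applying the deterministic bound to $\mM_h$ is superseded by the direct test-matrix argument you actually carry out.
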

\vspace{-0.25cm}

Proposition~\ref{prop:randomized_polar} gives the required operator norm control for $\gT_{h,q}$, but it does not immediately imply Assumption~\ref{assume:general_inexact}. The lower bound on the expected-alignment is useful only when the bracketed term on the right hand side is strictly positive. Due to space constraints, we discuss in Appendix \ref{appendix:discuss_prop} how the parameters $s, h$ can be chosen in Algorithm \ref{alg:rand_ns_update} so that $\gT_{h, q}$ satisfies Assumption \ref{assume:general_inexact}.
Therefore, the general analysis developed in the next section applies not only to the exact polar factor and deterministic inexact polar maps such as Newton-Schulz, but also to randomized constructions such as Algorithm \ref{alg:rand_ns_update}, whenever the resulting alignment parameter satisfies the nondegeneracy condition required in Assumption \ref{assume:general_inexact}. Beyond this Gaussian sketching scheme, our experiments also implement a faster randomized Kaczmarz-style sparse sketch, which replaces the dense sketching matrix with rescaled column samples of $\mM$. This practical variant is described in Appendix~\ref{app:kaczmarz}.

\section{Main Theoretical Results}\label{section:main_results}
\vspace{-0.1cm}

In this section, we establish convergence guarantees for \algname{Muon} with Nesterov momentum under heavy-tailed stochastic gradient noise. Our analysis proceeds in two steps. We first study an idealized version of \algname{Muon} with exact polar decomposition, which isolates the effects of the momentum~dynamics and heavy-tailed noise. We then extend the argument to inexact polar decompositions, capturing practical implementations based on approximate polar iterations. Throughout, let $f_\star$ denote a finite lower bound on $f$, let $d_0 \eqdef \min\{m,n\}$ for $\mX\in\mathbb{R}^{m\times n}$, and let $\mathcal{F}_k$ be the $\sigma$-field generated by all randomness up to the beginning of iteration $k$, so that $\mX_k$ is $\mathcal{F}_k$-measurable. Unless otherwise stated, $\gO(\cdot)$ hides constants depending only on $\alpha$ and $d_0$. We assume that $f$ is $L$-smooth as stated in the following assumption.
\vspace{-2mm}
\begin{assumption}\label{assume:lipschitz}
We assume that $f$ is $L$-smooth, that is, for any two matrices $\mX,\mY$, $f(\mY) \leq f(\mX) + \la \nabla f(\mX), \mY - \mX \ra + \frac{L}{2} \| \mX - \mY\|_F^2.$
\end{assumption}
            
\vskip -0.2cm

Assumption~\ref{assume:lipschitz} is standard in the analysis of first-order methods~\citep{loizou2021stochastic,ward2020adagrad,nesterov2004introductory} and is also commonly adopted in recent analyses of \algname{Muon}~\citep{sato2025convergence,chang2025convergence,shen2025convergence}.

Existing analyses of \algname{Muon} typically focus on a bounded-variance condition, $\mathbb{E}[\| \mG_k^i - \nabla f (\mX_k) \|_F^2 \mid \mathcal{F}_k] \leq \sigma_0^2$ (see \citep{sato2025convergence,chang2025convergence}). To better accommodate the training regimes encountered in modern deep learning, we instead adopt the following generalized moment assumption.

\vspace{-2mm}
\begin{assumption}[\textit{Generalized Heavy-Tailed Noise}]\label{assume:general_bounded_moment}
Conditional on $\mathcal{F}_k$, we assume that $\mG_k^1,\ldots,\mG_k^B$ are independent, and satisfy $\Exp{\mG_k^i \mid \mathcal{F}_k} = \nabla f(\mX_k)$ and
\begin{equation}\label{eq:general_bounded_moment}
\textstyle
\Exp{ \| \mG_k^i - \nabla f (\mX_k)\|_F^\alpha \mid \mathcal{F}_k } \leq \sigma_0^\alpha + \sigma_1^\alpha \| \nabla f(\mX_k) \|_F^\alpha
\end{equation}
with $\sigma_0, \sigma_1 \geq 0$ and $\alpha \in (1, 2]$ for all $i \in [B]$.
\end{assumption}

Assumption \ref{assume:general_bounded_moment} covers several familiar regimes. When $\alpha=2$ and $\sigma_1=0$, it reduces to the standard bounded-variance condition. When $\alpha \in (1,2)$, it accommodates heavy-tailed stochastic gradients, while the term involving $\sigma_1$ further allows the noise to grow with the gradient magnitude.~Related assumptions have been studied for vector-valued problems in \cite{liu2024nonconvex}; the special case $\sigma_1=0$ recovers the $\alpha$-bounded moment assumption considered in prior works \citep{hubler2024gradient,zhang2020adaptivemethodsgoodattention,chezhegov2024clipping}. To the best of our~knowledge, \cite{he2025low} is the only prior work that analyzes \algname{Muon} with $\alpha \in (1, 2]$, but it focuses on Polyak~momentum and $\sigma_1 = 0$. Moreover, \cite{liu2024nonconvex} provides a simple counterexample showing that Assumption \ref{assume:general_bounded_moment} with $\sigma_1 = 0$ may fail to hold even for 1-dimensional regression problems, thereby highlighting the need for $\sigma_1 > 0$ in practical settings. The generalization to $\alpha \in (1, 2]$ is particularly relevant in light of empirical evidence for heavy-tailed gradient noise in both image classification and large language model training \citep{simsekli2019tail,battash2024revisiting,zhang2020adaptivemethodsgoodattention,ahn2023linear}. Finally, Assumption \ref{assume:general_bounded_moment} yields a bound on the noise level of $\mG_k$~(Lemma \ref{lemma:batch_variance}) that decreases monotonically with the batch size $B$. This shows that mini-batching reduces the effective noise level, and will be used in both the exact and inexact analyses below.

\subsection{Warm-up: Analysis for Exact Polar Decomposition}

We begin with the idealized setting in which the polar decomposition is computed exactly, namely $\gT (\mM_k) = \mU_k \mV_k^\top$ whenever $\mM_k = \mU_k \bm{\Sigma}_k \mV_k^\top$ is the SVD of $\mM_k$. This setting isolates the effect of Nesterov momentum without the additional complication of approximation error in the polar step.

It is convenient to rescale the momentum matrices by $(1 - \beta)$, i.e., $\widetilde{\mC}_k \eqdef (1-\beta) \mC_k$ and $\widetilde{\mM}_k \eqdef (1-\beta) \mM_k$. This rescaling does not change the polar factor, since multiplying a matrix by a positive scalar only rescales its singular values. Thus, $\gT(\widetilde{\mM}_k) = \gT(\mM_k)=\mU_k\mV_k^\top$. However, $\widetilde{\mM}_k$ satisfies a nice simple recursion.

\vskip-0.15cm
\begin{restatable}{lemma}{lemmascaledrecurrence}
\label{lemma:scaled_recurrence}
For $\widetilde{\mC}_k \eqdef (1-\beta) \mC_k$ and $\widetilde{\mM}_k \eqdef (1-\beta) \mM_k$, we have, for all $k\ge1$,
\begin{equation*}
\textstyle
\widetilde{\mC}_k = \beta\widetilde{\mC}_{k-1} + (1-\beta)\mG_k \quad\quad\text{and}\quad\quad
\widetilde{\mM}_k  = \beta\widetilde{\mM}_{k-1} + (1-\beta)\big((1+\beta)\mG_k-\beta\mG_{k-1}\big).
\end{equation*}
\end{restatable}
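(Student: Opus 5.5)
The proof is a direct algebraic manipulation of the update rule \eqref{eq:muon_inexact}, namely $\mC_k = \beta\mC_{k-1} + \mG_k$ and $\mM_k = \beta\mC_k + \mG_k$. We prove the two identities in order, with the first feeding into the second.

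\textbf{First identity.} Multiply the momentum recursion by $(1-\beta)$. This gives
$$\widetilde{\mC}_k = (1-\beta)\beta\mC_{k-1} + (1-\beta)\mG_k = \beta\widetilde{\mC}_{k-1} + (1-\beta)\mG_k.$$

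\textbf{Second identity.} First express $\widetilde{\mM}_k$ in terms of $\widetilde{\mC}_k$ by scaling the Nesterov step:
$$\widetilde{\mM}_k = \beta\widetilde{\mC}_k + (1-\beta)\mG_k.$$
Write the same relation at index $k-1$, so $\widetilde{\mM}_{k-1} = \beta\widetilde{\mC}_{k-1} + (1-\beta)\mG_{k-1}$, and form the difference $\widetilde{\mM}_k - \beta\widetilde{\mM}_{k-1}$:
$$\widetilde{\mM}_k - \beta\widetilde{\mM}_{k-1} = \beta(\widetilde{\mC}_k - \beta\widetilde{\mC}_{k-1}) + (1-\beta)\mG_k - \beta(1-\beta)\mG_{k-1}.$$
By the first identity, $\widetilde{\mC}_k - \beta\widetilde{\mC}_{k-1} = (1-\beta)\mG_k$. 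Substituting this, the right-hand side becomes $(1-\beta)\big(\beta\mG_k + \mG_k - \beta\mG_{k-1}\big)$, which equals $(1-\beta)\big((1+\beta)\mG_k - \beta\mG_{k-1}\big)$. Rearranging yields the claimed recursion for $\widetilde{\mM}_k$.

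\textbf{Main obstacle.} The only delicate point is the indexing for $k\ge1$. The relation $\mM_{k-1} = \beta\mC_{k-1} + \mG_{k-1}$ must hold at index $k-1$, which requires the initialization convention at $k=0$: $\mC_{-1}$ is taken as $0$ (or arbitrary), and $\mM_0$ and $\mG_0$ are defined by the same update rule. We state this convention explicitly, and with it in place the argument is purely algebraic.
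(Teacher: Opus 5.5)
Your proof is correct and follows essentially the same route as the paper: both scale the two update equations by $(1-\beta)$ and eliminate $\widetilde{\mC}_{k-1}$ via $\widetilde{\mM}_{k-1} = \beta\widetilde{\mC}_{k-1} + (1-\beta)\mG_{k-1}$. You form the difference $\widetilde{\mM}_k - \beta\widetilde{\mM}_{k-1}$, whereas the paper substitutes $\beta^2\widetilde{\mC}_{k-1} = \beta\widetilde{\mM}_{k-1} - \beta(1-\beta)\mG_{k-1}$ directly; your initialization remark is harmless but unnecessary, since the relation at index $k-1\ge 0$ is just the definition of $\mM_{k-1}$.
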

\vskip-0.15cm
 
Our analysis begins with a descent inequality for the exact-polar update (Lemma~\ref{lemma:exact_descent}). Following standard \algname{Muon} analyses \citep{shen2025convergence,chang2025convergence}, this bound depends on the momentum error term $\mS_k = \widetilde{\mM}_k - \nabla f(\mX_k)$. The main technical challenge is therefore to control $\mS_k$, which is not itself a martingale difference because of Nesterov momentum.
We address this by unrolling the recursion for $\mS_k$ into three components: the initial bias $\mS_0$, a drift term $\mD_k=\nabla f(\mX_{k-1})-\nabla f(\mX_k)$, and a geometrically weighted sum of stochastic gradient noises $\bm\xi_k=\mG_k-\nabla f(\mX_k)$ (Lemma~\ref{lemma:eps_unroll}). Smoothness bounds the drift, while Lemma~\ref{lemma:batch_variance} together with the martingale inequality in Lemma~\ref{lemma:bound_martingle} controls the weighted noise terms. Finally, for \algname{Muon} with Nesterov momentum and exact polar decomposition, we establish the following convergence guarantee under heavy-tailed noise.

\vskip-0.15cm
\begin{restatable}{theorem}{corollaryexactwithalpha}
\label{corollary:exact_with_alpha}
Suppose Assumptions \ref{assume:lipschitz} and \ref{assume:general_bounded_moment} hold. For a horizon $K\ge2$, \algname{Muon} \eqref{eq:muon_inexact} with exact polar decompositions, step size $\eta = \nicefrac{1}{K^{\nicefrac{(2 \alpha - 1)}{(3 \alpha - 2)}}}$, momentum parameter $\beta = 1 - \nicefrac{1}{K^{\nicefrac{\alpha}{(3 \alpha - 2)}}}$, and batch size $B > \{ 2 \sqrt{\pi} ( 1 + \sqrt{d_0} ) \gC_\alpha^{\nicefrac{1}{\alpha}} \sigma_1 \}^{\nicefrac{\alpha}{\alpha - 1}}$ satisfies
\begin{align}\label{eq:exact_with_alpha}
\min_{0 \leq k \leq K-1} \Exp{\|\nabla f(\mX_k)\|_F} & \leq \gO \left(
\frac{f(\mX_0)-f_\star}{\rho_{\rm ex} {\red K^{\nicefrac{\alpha - 1}{3\alpha - 2}}} }
+ \frac{L}{\rho_{\rm ex} {\red K^{\nicefrac{\alpha - 1}{3 \alpha - 2}}} }
+ \frac{\sigma_0}{\rho_{\rm ex} B^{\nicefrac{\alpha - 1}{\alpha}} {\red K^{\nicefrac{\alpha - 1}{3 \alpha - 2}}}}
\right.  \nonumber \\
&\hskip-0.3cm \left. 
+ \frac{L}{\rho_{\rm ex} K^{\nicefrac{2\alpha - 1}{3\alpha - 2}}} 
+ \frac{\Exp{\| \mS_0 \|_F}}{\rho_{\rm ex} K^{\nicefrac{2\alpha - 2}{3\alpha - 2}}} + \frac{\sigma_1 \| \nabla f(\mX_0)\|_F}{\rho_{\rm ex} B^{\nicefrac{\alpha - 1}{\alpha}} K}\right), 
\end{align}
where $\rho_{\rm ex}\hskip-1pt = \hskip-1pt 1 - (1\hskip-2pt+\hskip-2pt\sqrt{d_0})2\sqrt{\pi}\gC_\alpha^{\nicefrac{1}{\alpha}}\sigma_1/B^{\nicefrac{\alpha-1}{\alpha}}\hskip-1pt>\hskip-1pt0$ and $0<\gC_\alpha \leq 2$ is a scalar depending only~on~$\alpha$.
\end{restatable}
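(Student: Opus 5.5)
The plan is to follow the three-step template described just before the statement: a descent inequality, a bound on the momentum error $\mS_k=\widetilde{\mM}_k-\nabla f(\mX_k)$, and telescoping with the prescribed $\eta,\beta$.

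\textbf{Descent inequality.} Since $\gT(\widetilde{\mM}_k)=\mU_k\mV_k^\top$ and $\|\mU_k\mV_k^\top\|_F^2\le d_0$, $L$-smoothness gives
\[
f(\mX_{k+1})\le f(\mX_k)-\eta\langle \nabla f(\mX_k),\mU_k\mV_k^\top\rangle+\tfrac{L d_0\eta^2}{2}.
\]
Write $\nabla f(\mX_k)=\widetilde{\mM}_k-\mS_k$. Then $\langle \widetilde{\mM}_k,\mU_k\mV_k^\top\rangle=\|\widetilde{\mM}_k\|_*$. By the triangle inequality and $\|\cdot\|_*\le\sqrt{d_0}\|\cdot\|_F$,
\[
\|\widetilde{\mM}_k\|_*\ge \|\nabla f(\mX_k)\|_F-\sqrt{d_0}\|\mS_k\|_F,
\]
and $|\langle\mS_k,\mU_k\mV_k^\top\rangle|\le\|\mS_k\|_*\le\sqrt{d_0}\|\mS_k\|_F$. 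Combining these yields
\[
f(\mX_{k+1})\le f(\mX_k)-\eta\|\nabla f(\mX_k)\|_F+(1+\sqrt{d_0})\eta\|\mS_k\|_F+\tfrac{Ld_0\eta^2}{2}.
\]
This is Lemma~\ref{lemma:exact_descent}, and it is the source of the factor $(1+\sqrt{d_0})$ in $\rho_{\rm ex}$. Summing over $k$ gives
\[
\eta\sum_k\Exp{\|\nabla f(\mX_k)\|_F}\le f(\mX_0)-f_\star+(1+\sqrt{d_0})\eta\sum_k\Exp{\|\mS_k\|_F}+\tfrac{KLd_0\eta^2}{2}.
\]

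\textbf{Bounding $\sum_k\Exp{\|\mS_k\|_F}$.} By Lemma~\ref{lemma:scaled_recurrence},
\[
\mS_k=\beta\mS_{k-1}+\beta\mD_k+(1-\beta)\big((1+\beta)\bm\xi_k-\beta\bm\xi_{k-1}\big)+\beta(1-\beta)\mD_k,
\]
possibly up to rearrangement. Unrolling as in Lemma~\ref{lemma:eps_unroll} gives three pieces:
\begin{itemize}
\item an initial-bias term $\beta^k\mS_0$;
\item a drift term $\sum_j\beta^{k-j}c\,\mD_j$, which by smoothness satisfies $\|\mD_j\|_F\le L\eta\sqrt{d_0}$ and therefore contributes $\gO(L\eta/(1-\beta))$;
\item a noise term $(1-\beta)\sum_j w_{k,j}\bm\xi_j$ with weights $|w_{k,j}|\lesssim\beta^{k-j}$.
\end{itemize}
The Nesterov correction $-\beta\bm\xi_{k-1}$ only reindexes these weights, so after regrouping the noise is still a martingale sum with bounded geometric weights.

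For the noise term, I would apply Lemma~\ref{lemma:bound_martingle}, a von Bahr–Esseen/Burkholder-type bound with constant $2\sqrt{\pi}\gC_\alpha^{1/\alpha}$, together with Lemma~\ref{lemma:batch_variance}. Lemma~\ref{lemma:batch_variance} gives
\[
\Exp{\|\bm\xi_j\|_F^\alpha\mid\mathcal F_j}\lesssim B^{1-\alpha}\big(\sigma_0^\alpha+\sigma_1^\alpha\|\nabla f(\mX_j)\|_F^\alpha\big).
\]
This yields
\[
\Exp{\|\text{noise}\|_F}\le 2\sqrt{\pi}\gC_\alpha^{1/\alpha}(1-\beta)B^{-(\alpha-1)/\alpha}\Big(\sum_j\beta^{\alpha(k-j)}\big(\sigma_0^\alpha+\sigma_1^\alpha\Exp{\|\nabla f(\mX_j)\|_F^\alpha}\big)\Big)^{1/\alpha}.
\]
Subadditivity of $t\mapsto t^{1/\alpha}$ splits the $\sigma_0$ part, which gives $\sigma_0(1-\beta)^{(\alpha-1)/\alpha}B^{-(\alpha-1)/\alpha}$, from the $\sigma_1$ part. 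For the $\sigma_1$ part I bound $(\sum a_j^\alpha)^{1/\alpha}\le\sum a_j$, so it becomes
\[
\sigma_1(1-\beta)B^{-(\alpha-1)/\alpha}\sum_j\beta^{k-j}\Exp{\|\nabla f(\mX_j)\|_F}.
\]
Summed over $k$, the geometric weights give at most $\sigma_1 B^{-(\alpha-1)/\alpha}\sum_j\Exp{\|\nabla f(\mX_j)\|_F}$ times the constant. The $j=0$ piece, via the $\mG_0$ or $\bm\xi_{-1}$ boundary term, produces the $\sigma_1\|\nabla f(\mX_0)\|_F/K$ term.

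\textbf{Main obstacle and final step.} The main obstacle is absorbing the $\sigma_1$ contribution. After multiplication by $(1+\sqrt{d_0})$, it must be moved to the left-hand side, which gives the coefficient $\rho_{\rm ex}\eta$; the batch-size condition exactly ensures $\rho_{\rm ex}>0$. The delicate point is obtaining the $\sigma_1$ term as a plain sum of $\Exp{\|\nabla f\|_F}$ with total weight at most $1$, rather than an $\alpha$-power. If the direct Minkowski-type bound fails, I would apply the martingale inequality conditionally, term by term, before using Jensen's inequality. Dividing by $\rho_{\rm ex}\eta K$ then gives
\[
\frac{f(\mX_0)-f_\star}{\rho_{\rm ex}\eta K}+\frac{L\eta}{1-\beta}+L\eta+\frac{\sigma_0(1-\beta)^{(\alpha-1)/\alpha}}{B^{(\alpha-1)/\alpha}}+\frac{\Exp{\|\mS_0\|_F}}{(1-\beta)K}+\frac{\sigma_1\|\nabla f(\mX_0)\|_F}{B^{(\alpha-1)/\alpha}K},
\]
each divided by $\rho_{\rm ex}$. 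Substituting $\eta=K^{-(2\alpha-1)/(3\alpha-2)}$ and $1-\beta=K^{-\alpha/(3\alpha-2)}$ makes the first, second and fourth terms all $K^{-(\alpha-1)/(3\alpha-2)}$. The other terms follow in the same way: $L\eta=LK^{-(2\alpha-1)/(3\alpha-2)}$ and $\Exp{\|\mS_0\|_F}/((1-\beta)K)=\Exp{\|\mS_0\|_F}K^{-(2\alpha-2)/(3\alpha-2)}$. Finally, the minimum is bounded by the average over $k$.
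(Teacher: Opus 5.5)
Your architecture matches the paper's: Lemma~\ref{lemma:exact_descent}, the unrolling of Lemma~\ref{lemma:eps_unroll}, the martingale bound combined with Lemma~\ref{lemma:batch_variance}, absorption of the $\sigma_1$ term, then substitution. However, the step you flag as delicate does fail as written, and the theorem depends on it (when $\sigma_1=0$ your route is fine). Once Jensen moves the expectation inside the $1/\alpha$ power, the $\sigma_1$ contribution is $\big(\sum_j\beta^{\alpha(k-j)}\sigma_1^\alpha\,\Exp{\|\nabla f(\mX_j)\|_F^\alpha}\big)^{1/\alpha}\le\sigma_1\sum_j\beta^{k-j}\big(\Exp{\|\nabla f(\mX_j)\|_F^\alpha}\big)^{1/\alpha}$. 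Since $(\Exp{X^\alpha})^{1/\alpha}\ge\Exp{X}$ for $\alpha>1$, replacing this by $\sigma_1\sum_j\beta^{k-j}\Exp{\|\nabla f(\mX_j)\|_F}$ goes the wrong way. The descent inequality controls only first moments, so an $L^\alpha$ norm of the gradient cannot be moved to the left-hand side to produce $\rho_{\rm ex}$.

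The paper avoids this by never taking expectations of the gradient before using subadditivity. It invokes a predictable version of Lemma~\ref{lemma:bound_martingle}, $\Exp{\|\sum_s a_s\bm\xi_s\|_F}\le 2\sqrt{\pi}\,\Exp{\big(\sum_s|a_s|^\alpha\,\Exp{\|\bm\xi_s\|_F^\alpha\mid\mathcal F_s}\big)^{1/\alpha}}$. It then bounds the conditional moments pointwise via Lemma~\ref{lemma:batch_variance}, keeping $\|\nabla f(\mX_s)\|_F$ random, and applies $(x+y)^{1/\alpha}\le x^{1/\alpha}+y^{1/\alpha}$ inside the outer expectation. This gives the linear term $\sigma_1\sum_s|a_s|\Exp{\|\nabla f(\mX_s)\|_F}$. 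Your fallback (``apply the martingale inequality conditionally, term by term'') points in this direction but is not yet an argument. Also, that predictable form is not literally Lemma~\ref{lemma:bound_martingle}, whose right side involves $\|\mW_t\|_F^p$ rather than conditional moments, and it does not follow from the stated lemma by Jensen's inequality alone.

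A self-contained repair that uses Lemma~\ref{lemma:bound_martingle} exactly as stated is to split $\bm\xi_s=w_s\bm\xi_s+(1-w_s)\bm\xi_s$. Take the $\mathcal F_s$-measurable weight $w_s=\sigma_0/(\sigma_0+\sigma_1\|\nabla f(\mX_s)\|_F)$, with $w_s=1$ if the denominator vanishes; both pieces are then martingale differences. Because $\sigma_0^\alpha+\sigma_1^\alpha x^\alpha\le(\sigma_0+\sigma_1x)^\alpha$, Lemma~\ref{lemma:batch_variance} gives the deterministic bound $\Exp{\|w_s\bm\xi_s\|_F^\alpha\mid\mathcal F_s}\le\gC_\alpha\sigma_0^\alpha/B^{\alpha-1}$. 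Your Jensen route is therefore legitimate for the first piece and produces the $\Gamma_\alpha\sigma_0(1-\beta)^{(\alpha-1)/\alpha}$ term. The second piece needs no cancellation: the triangle inequality with conditional H\"older gives $\Exp{\|(1-w_s)\bm\xi_s\|_F\mid\mathcal F_s}\le\gC_\alpha^{1/\alpha}\sigma_1\|\nabla f(\mX_s)\|_F/B^{(\alpha-1)/\alpha}$. That is exactly the linear term you need, even without the factor $2\sqrt{\pi}$, so the stated batch condition suffices.

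Separately, fix a constant in your descent step. Combining $\|\widetilde{\mM}_k\|_*\ge\|\nabla f(\mX_k)\|_F-\sqrt{d_0}\|\mS_k\|_F$ with $|\langle\mS_k,\mU_k\mV_k^\top\rangle|\le\sqrt{d_0}\|\mS_k\|_F$ gives $2\sqrt{d_0}$, not $1+\sqrt{d_0}$, which would not match $\rho_{\rm ex}$ or the batch threshold. Use $\|\widetilde{\mM}_k\|_*\ge\|\widetilde{\mM}_k\|_F\ge\|\nabla f(\mX_k)\|_F-\|\mS_k\|_F$ as the paper does. Finally, the drift coefficient in the one-step recursion is $\beta^2$, not $\beta(2-\beta)$; this slip is harmless.
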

\vskip-0.15cm

An interesting feature of \algname{Muon} is that the choices of step size $\eta$ and momentum parameter $\beta$ do not require knowledge of the smoothness constant $L$. For fixed problem parameters and fixed $B$ satisfying the batch size condition, the dominant term on the right side of \eqref{eq:exact_with_alpha} is $\gO(\nicefrac{1}{K^{\nicefrac{(\alpha - 1)}{(3 \alpha - 2)}}} )$. Thus, to obtain an $\varepsilon$-accurate point, the algorithm requires $\gO ( \varepsilon^{\nicefrac{-(3\alpha - 2)}{(\alpha - 1)} })$ iterations. This matches the optimal complexity for non-convex minimization under heavy-tailed noise~\citep{zhang2020adaptivemethodsgoodattention}. In particular, when $\alpha = 2$, this bound recovers the $\gO ( \varepsilon^{-4} )$ complexity, which is known to be optimal under the standard bounded-variance setting~\citep{lan2012optimal, ghadimi2013stochastic}. Compared with $\gO \left( \log K / K^{1/4} \right)$ rate obtained for the Nesterov variant under bounded variance in \cite[Theorem~3.1(1)]{chang2025convergence}, our result covers the heavy-tailed noise regime ($\alpha \in (1, 2), \sigma_1 > 0$) and yields a faster asymptotic rate in the special case $\alpha=2, \sigma_1=0$.

The lower bound on $B$ is required only because we allow more general regime $\sigma_1>0$ in Assumption~\ref{assume:general_bounded_moment}. When $\sigma_1=0$, no such restriction is needed, and even $B=1$ is admissible. Therefore, a large batch size is not a limitation of our approach. This dependence of $B$ on $\sigma_1$ aligns with the findings of \cite{liu2024nonconvex} for the $\sigma_1 > 0$ regime. Notably, since the batch size threshold is independent of $K$, the sample complexity has the same order as the iteration complexity for fixed $B$, i.e., $\gO ( \varepsilon^{\nicefrac{-(3\alpha - 2)}{(\alpha - 1)}} )$.

\vspace{-0.1cm}
\subsection{Analysis for Inexact Polar Decomposition}    
\vspace{-0.1cm}

We now turn to the practically relevant setting in which the polar decomposition is computed only approximately, captured by Assumption~\ref{assume:general_inexact}. The proof follows a similar overall structure to the exact case, but the descent argument must now account for the loss of alignment between the momentum matrix $\widetilde{\mM}_k$ and the approximate polar factor $\gT(\mM_k)$. The key observation is that Assumption~\ref{assume:general_inexact} implies
$\mathbb{E}[\langle \widetilde{\mM}_k, \gT(\mM_k)\rangle \mid \mM_k]
\geq
(1-\gamma_k)\|\widetilde{\mM}_k\|_*$,
because $\widetilde{\mM}_k=(1-\beta)\mM_k$. Thus, after taking outer expectations, the inexact polar step still preserves a fraction of the alignment achieved in the exact case. Together with the control of the momentum drift as before, this yields the following result.

\vskip-0.15cm
\begin{restatable}{theorem}{theoreminexact}
\label{theorem:inexact}
Suppose Assumptions \ref{assume:general_inexact}, \ref{assume:lipschitz}, and \ref{assume:general_bounded_moment} hold. Define $\Bar{\nu} = \max_{0 \leq k \leq K-1} \nu_k$ and $\Bar{\gamma} = \max_{0 \leq k \leq K-1} \gamma_k$. Then, for any $\eta>0$ and $\beta\in(0,1)$, \algname{Muon} \eqref{eq:muon_inexact} with inexact polar decomposition and batch size $B > \{ \nicefrac{2 \sqrt{\pi} \left( 1 + \sqrt{d_0} (1 + \Bar{\nu}) \right) \gC_\alpha^{\nicefrac{1}{\alpha}} \sigma_1 }{(1 - \Bar{\gamma})} \}^{\nicefrac{\alpha}{\alpha - 1}}$ satisfies
\begin{multline*}
\min_{0 \leq k \leq K-1} \Exp{\|\nabla f(\mX_k)\|_F}
\leq
\gO \Bigg(
\frac{f(\mX_0)-f_\star}{\rho{\red \eta K}}
+ \frac{\beta^2 L (1 + \Bar{\nu})^2{\red \eta} }{\rho{\red (1 - \beta)} }
+ \frac{(1 + \Bar{\nu}) \sigma_0{\red (1-\beta)^{\nicefrac{\alpha-1}{\alpha}}}}{\rho B^{\nicefrac{\alpha-1}{\alpha}}}\\
+ \frac{L \eta \left( 1 + \Bar{\nu} \right)^2}{\rho}
+ \frac{(1 + \Bar{\nu}) \Exp{\left\| \mS_0 \right\|_F}}{\rho(1 - \beta) K } 
+ \frac{(1 + \Bar{\nu}) \sigma_1 \|\nabla f(\mX_0)\|_F}{\rho B^{\nicefrac{\alpha-1}{\alpha}} K}
\Bigg),
\end{multline*}
where $\rho \eqdef 1 - \Bar{\gamma} - \left( 1+\sqrt{d_0} \left( 1 + \Bar{\nu} \right) \right) 2 \sqrt{\pi} \gC_\alpha^{\nicefrac{1}{\alpha}} \sigma_1/B^{\nicefrac{\alpha-1}{\alpha}}>0$ and $\gC_\alpha\in(0,2]$ depends only~on~$\alpha$.
\end{restatable}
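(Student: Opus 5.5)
We follow the template of the exact-polar analysis and begin with a one-step descent inequality. By Assumption~\ref{assume:lipschitz} and $\mX_{k+1}=\mX_k-\eta\gT(\mM_k)$,
\begin{equation*}
f(\mX_{k+1})\le f(\mX_k)-\eta\langle\nabla f(\mX_k),\gT(\mM_k)\rangle+\tfrac{L\eta^2}{2}\|\gT(\mM_k)\|_F^2 .
\end{equation*}
We write $\nabla f(\mX_k)=\widetilde{\mM}_k-\mS_k$ and split the inner product into $\langle\widetilde{\mM}_k,\gT(\mM_k)\rangle$ and $\langle\mS_k,\gT(\mM_k)\rangle$.

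For the first piece, condition on $\mM_k$ and use Assumption~\ref{assume:general_inexact} together with $\widetilde{\mM}_k=(1-\beta)\mM_k$. This gives a lower bound $(1-\gamma_k)\|\widetilde{\mM}_k\|_*$. Then $\|\widetilde{\mM}_k\|_*\ge\|\widetilde{\mM}_k\|_F\ge\|\nabla f(\mX_k)\|_F-\|\mS_k\|_F$.

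For the second piece, use $|\langle\mS_k,\gT\rangle|\le\|\mS_k\|_*\|\gT\|_{\rm op}\le\sqrt{d_0}\|\mS_k\|_F\|\gT\|_{\rm op}$. Cauchy--Schwarz with the second-moment bound gives $\mathbb{E}[\|\gT\|_{\rm op}\mid\mM_k]\le 1+\nu_k$. The quadratic term is bounded by $\|\gT\|_F^2\le d_0\|\gT\|_{\rm op}^2\le d_0(1+\bar\nu)^2$ in expectation, and $d_0$ is absorbed into $\gO$.

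Summing over $k$ and telescoping then yields
\begin{equation*}
(1-\bar\gamma)\eta\sum_k\mathbb{E}\|\nabla f(\mX_k)\|_F\le f(\mX_0)-f_\star+\eta\bigl(1+\sqrt{d_0}(1+\bar\nu)\bigr)\sum_k\mathbb{E}\|\mS_k\|_F+\tfrac{L\eta^2K}{2}d_0(1+\bar\nu)^2 .
\end{equation*}
The factor $1+\sqrt{d_0}(1+\bar\nu)$ matches the coefficient appearing in $\rho$.

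Next we control $\sum_k\mathbb{E}\|\mS_k\|_F$ via Lemma~\ref{lemma:eps_unroll}. Lemma~\ref{lemma:scaled_recurrence} unrolls $\mS_k$ into three parts: $\beta^k\mS_0$, a geometrically weighted sum of drifts $\mD_j=\nabla f(\mX_{j-1})-\nabla f(\mX_j)$ (with Nesterov coefficients $\beta$ and $(1-\beta)\beta$), and a weighted sum of noises $\bm\xi_j$.

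The three parts are handled as follows.
\begin{itemize}
\item The $\mS_0$ part sums to $\mathbb{E}\|\mS_0\|_F/(1-\beta)$.
\item For the drift, smoothness gives $\|\mD_j\|_F\le L\eta\|\gT(\mM_{j-1})\|_F$, whose expectation is at most $L\eta\sqrt{d_0}(1+\bar\nu)$. Summing the geometric weights yields the terms $\beta^2L\eta(1+\bar\nu)/(1-\beta)$ and $L\eta(1+\bar\nu)$ per iterate, up to constants. Multiplied by the prefactor $(1+\bar\nu)$, these give the stated $(1+\bar\nu)^2$ terms.
\item For the noise, apply the martingale inequality (Lemma~\ref{lemma:bound_martingle}) to $\sum_j(1-\beta)\beta^{k-j}c_j\bm\xi_j$. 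This gives $\mathbb{E}\|\cdot\|_F\le 2\sqrt{\pi}\gC_\alpha^{1/\alpha}\bigl(\sum_j w_j^\alpha\mathbb{E}\|\bm\xi_j\|_F^\alpha\bigr)^{1/\alpha}$. Lemma~\ref{lemma:batch_variance} bounds $\mathbb{E}\|\bm\xi_j\|_F^\alpha\lesssim B^{1-\alpha}(\sigma_0^\alpha+\sigma_1^\alpha\|\nabla f(\mX_j)\|_F^\alpha)$. Since $\sum_j((1-\beta)\beta^{k-j})^\alpha\lesssim(1-\beta)^{\alpha-1}$, the $\sigma_0$ contribution is $\sigma_0(1-\beta)^{(\alpha-1)/\alpha}/B^{(\alpha-1)/\alpha}$.
\end{itemize}

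The main obstacle is the $\sigma_1$ contribution, because it reintroduces gradient norms on the right-hand side. We use $(\sum a_j^\alpha)^{1/\alpha}\le\sum a_j$ to get $\sigma_1B^{-(\alpha-1)/\alpha}\sum_j w_{k,j}\|\nabla f(\mX_j)\|_F$. After summing over $k$, the weights satisfy $\sum_k w_{k,j}\lesssim1$ (careful Nesterov bookkeeping is needed to keep this constant at $1$, with the $\mG_{k-1}$ term producing the boundary term in $\|\nabla f(\mX_0)\|_F$). The result is at most $2\sqrt{\pi}\gC_\alpha^{1/\alpha}\sigma_1B^{-(\alpha-1)/\alpha}\sum_k\mathbb{E}\|\nabla f(\mX_k)\|_F$ plus the $\|\nabla f(\mX_0)\|_F/K$ term.

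After multiplying by the prefactor $1+\sqrt{d_0}(1+\bar\nu)$, we move this quantity to the left-hand side. The remaining coefficient on the left is exactly $\rho$, and the batch-size condition guarantees $\rho>0$.

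Finally, we divide by $\rho\eta K$ and use $\min_k\le$ average. This produces each of the six stated terms, for arbitrary $\eta$ and $\beta$.
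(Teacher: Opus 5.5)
Your skeleton matches the paper's proof. It has the inexact descent inequality with coefficient $1+\sqrt{d_0}(1+\Bar{\nu})$ on $\sum_k\mathbb{E}\|\mS_k\|_F$, the unrolling of Lemma~\ref{lemma:eps_unroll}, the drift bound via smoothness, the unit-mass bookkeeping of the $\sigma_1$ weights, and absorption into $\rho$. The gap is in the noise step, exactly where the $\sigma_1$ term is created.

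You bound $\mathbb{E}\|\sum_j w_j\bm\xi_j\|_F$ by $2\sqrt{\pi}\bigl(\sum_j w_j^\alpha\mathbb{E}\|\bm\xi_j\|_F^\alpha\bigr)^{1/\alpha}$. That is, you move the expectation inside the $\alpha$-th root, which is what Lemma~\ref{lemma:bound_martingle} followed by Jensen gives. With unconditional expectations, Lemma~\ref{lemma:batch_variance} then gives $\sigma_1^\alpha\,\mathbb{E}\|\nabla f(\mX_j)\|_F^\alpha$, not $\sigma_1^\alpha\|\nabla f(\mX_j)\|_F^\alpha$. Subadditivity then leaves $\sigma_1\sum_j w_j\bigl(\mathbb{E}\|\nabla f(\mX_j)\|_F^\alpha\bigr)^{1/\alpha}$. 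Since $\bigl(\mathbb{E}\|\nabla f(\mX_j)\|_F^\alpha\bigr)^{1/\alpha}\ge\mathbb{E}\|\nabla f(\mX_j)\|_F$, this cannot be moved onto a left-hand side that only controls $\sum_k\mathbb{E}\|\nabla f(\mX_k)\|_F$. So your claim that the $\sigma_1$ contribution is at most a multiple of $\sum_k\mathbb{E}\|\nabla f(\mX_k)\|_F$ does not follow. Your display with the random quantity $\|\nabla f(\mX_j)\|_F^\alpha$ on the right is valid only conditionally, and the needed outer expectation is never supplied. For $\sigma_1=0$ your argument is fine.

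What you need is a predictable form of the martingale inequality, with the outer expectation outside the root and conditional moments inside:
\[
\mathbb{E}\Bigl\|\sum_j w_j\bm\xi_j\Bigr\|_F\le c\,\mathbb{E}\Bigl[\Bigl(\sum_j w_j^\alpha\,\mathbb{E}\bigl[\|\bm\xi_j\|_F^\alpha\mid\mathcal{F}_j\bigr]\Bigr)^{1/\alpha}\Bigr].
\]
You then apply Lemma~\ref{lemma:batch_variance} inside. Next, use $(x+y)^{1/\alpha}\le x^{1/\alpha}+y^{1/\alpha}$ and $(\sum_j a_j^\alpha)^{1/\alpha}\le\sum_j a_j$ pathwise. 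Only afterwards take the outer expectation, which yields $\sigma_1\sum_j w_j\,\mathbb{E}\|\nabla f(\mX_j)\|_F$. This is exactly what the paper does in Lemma~\ref{lemma:bound_S_k}, where it invokes a ``predictable conditional version'' of Lemma~\ref{lemma:bound_martingle}.

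This version does not follow from Lemma~\ref{lemma:bound_martingle} and Jensen alone. Replacing the realized $\|\bm\xi_j\|_F^\alpha$ by their conditional expectations under the concave power $1/\alpha$ needs a domination argument such as Lenglart's inequality, which may change the numerical constant.

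A smaller point of the same kind arises in the descent step. Conditioning on $\mM_k$ alone does not make $\mS_k$ measurable. To pull $\|\mS_k\|_F$ out of $\mathbb{E}\bigl[\|\mS_k\|_F\|\gT(\mM_k)\|_{\mathrm{op}}\bigr]$, you need Assumption~\ref{assume:general_inexact} to hold conditionally on the whole history up to the formation of $\mM_k$, as the paper does with $\mathcal G_k$.
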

\vskip-0.15cm

Theorem~\ref{theorem:inexact} shows that inexact polar decompositions affect the convergence guarantee through two quantities, $\Bar{\gamma}, \Bar{\nu}$. More generally, larger values of $\Bar{\gamma}$ or $\Bar{\nu}$ increase the required batch size $B$ and decrease the value of $\rho$, thereby worsening the constants in the bound of Theorem~\ref{theorem:inexact}. 
    
Nevertheless, the dependence on $K$ remains unchanged. In particular, as in Theorem \ref{corollary:exact_with_alpha}, choosing the step size $\eta = \nicefrac{1}{K^{\nicefrac{(2 \alpha - 1)}{(3 \alpha - 2)}}}$ and the momentum parameter $\beta = 1 - \nicefrac{1}{K^{\nicefrac{\alpha}{(3 \alpha - 2)}}}$ yields an iteration complexity of $\gO ( \varepsilon^{\nicefrac{-(3\alpha - 2)}{(\alpha - 1)}} )$ for \algname{Muon} up to constants depending on $\Bar{\gamma}, \Bar{\nu}$ and $\rho$ (dominating terms are highlighted in red). As discussed in Section~\ref{section:motivation}, this convergence guarantee applies to approximate polar factors computed via a finite number of Newton-Schulz iterations.

\vspace{-0.2cm}
\paragraph{Convergence Guarantees without the Knowledge of $\alpha$.} 
The parameter choices in the previous theorems depend on the heavy-tail parameter $\alpha$ in Assumption \ref{assume:general_bounded_moment}, which may be difficult to estimate in practice. We therefore also provide a universal schedule that guarantees convergence without requiring knowledge of $\alpha$ in the inexact-polar setting with $\sigma_1=0$.

\vskip-0.15cm
\begin{restatable}{corollary}{corollaryinexactwithoutalpha}
\label{corollary:inexact_without_alpha}
Suppose Assumptions \ref{assume:general_inexact}, \ref{assume:lipschitz}, and \ref{assume:general_bounded_moment} hold with $\sigma_1=0$. Define
$\Bar{\nu} \eqdef \max_{0 \leq k \leq K-1}\nu_k$, $\Bar{\gamma} \eqdef \max_{0 \leq k \leq K-1}\gamma_k$, $\rho_0 \eqdef 1-\Bar{\gamma}$.
For a horizon $K\ge2$, \algname{Muon} \eqref{eq:muon_inexact} with inexact polar decomposition, step size $\eta = \frac{1}{K^{3/4}}$, momentum parameter $\beta = 1 - \frac{1}{\sqrt{K}}$, and any batch size $B\ge1$ satisfies
\begin{multline*}
\min_{0 \leq k \leq K-1} \Exp{\|\nabla f(\mX_k)\|_F}
\leq \gO \Bigg(
\frac{f(\mX_0)-f_\star}{\rho_0 K^{\frac{1}{4}}}
+ \frac{L(1+\Bar{\nu})^2}{\rho_0 K^{\frac{3}{4}}}
+ \frac{(1+\Bar{\nu})\Exp{\| \mS_0 \|_F}}{\rho_0\sqrt{K}} \\
+ \frac{L(1+\Bar{\nu})^2}{\rho_0 K^{\frac{1}{4}}}
+ \frac{(1+\Bar{\nu})\sigma_0}
{\rho_0 B^{\frac{\alpha - 1}{\alpha}} {\red K^{\frac{\alpha - 1}{2 \alpha}}}}
\Bigg).
\end{multline*}
\end{restatable}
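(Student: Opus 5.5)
This corollary is a direct specialization of Theorem~\ref{theorem:inexact}, so I would instantiate that bound rather than redo the descent analysis. With $\sigma_1=0$, the batch-size condition of Theorem~\ref{theorem:inexact} reads $B>0$, so any $B\ge1$ is admissible. The contraction constant becomes $\rho = 1-\Bar{\gamma} = \rho_0>0$, since $\gamma_k\in[0,1)$. The last term of the theorem, proportional to $\sigma_1\|\nabla f(\mX_0)\|_F$, also vanishes. Importantly, the theorem holds for arbitrary $\eta>0$ and $\beta\in(0,1)$, so the $\alpha$-free schedule $\eta=K^{-3/4}$, $1-\beta=K^{-1/2}$ can be plugged in directly. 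For $K\ge2$ this gives $\beta\in(0,1)$.

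The remaining work is term-by-term bookkeeping.
\begin{itemize}
\item The optimization term gives $\frac{f(\mX_0)-f_\star}{\rho_0\eta K} = \frac{f(\mX_0)-f_\star}{\rho_0 K^{1/4}}$.
\item The momentum-drift term gives $\frac{\beta^2L(1+\Bar{\nu})^2\eta}{\rho_0(1-\beta)} \le \frac{L(1+\Bar{\nu})^2 K^{-3/4}}{\rho_0 K^{-1/2}} = \frac{L(1+\Bar{\nu})^2}{\rho_0K^{1/4}}$, using $\beta^2\le1$.
\item The plain smoothness term gives $\frac{L\eta(1+\Bar{\nu})^2}{\rho_0} = \frac{L(1+\Bar{\nu})^2}{\rho_0K^{3/4}}$.
\item The initial-bias term gives $\frac{(1+\Bar{\nu})\Exp{\|\mS_0\|_F}}{\rho_0(1-\beta)K} = \frac{(1+\Bar{\nu})\Exp{\|\mS_0\|_F}}{\rho_0\sqrt{K}}$.
\item The noise term gives $\frac{(1+\Bar{\nu})\sigma_0(1-\beta)^{(\alpha-1)/\alpha}}{\rho_0B^{(\alpha-1)/\alpha}} = \frac{(1+\Bar{\nu})\sigma_0}{\rho_0B^{(\alpha-1)/\alpha}K^{(\alpha-1)/(2\alpha)}}$.
\end{itemize}
Collecting these reproduces exactly the five terms of the stated bound.

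There is no genuine obstacle; the only points needing care are the following.
\begin{itemize}
\item The hidden constants in Theorem~\ref{theorem:inexact} depend only on $\alpha$ and $d_0$, through $\gC_\alpha\le2$. They therefore remain bounded uniformly, and in particular do not blow up when $\sigma_1=0$.
\item The schedule itself involves no $\alpha$, which is the claimed adaptivity. The price is that the noise term decays only as $K^{-(\alpha-1)/(2\alpha)}$ rather than at the optimal rate.
\item We should check that $\rho$ in the theorem, with the $\sigma_1$ term removed, is exactly $\rho_0$, so that no extra positivity condition is needed.
\end{itemize}
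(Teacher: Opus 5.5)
Your proposal is correct and follows the paper's proof. Both specialize Theorem~\ref{theorem:inexact} to $\sigma_1=0$, which makes the batch-size condition vacuous, gives $\rho=\rho_0$, and removes the $\sigma_1\|\nabla f(\mX_0)\|_F$ term; both then substitute $\eta=K^{-3/4}$ and $1-\beta=K^{-1/2}$ term by term and absorb $\beta^2\le 1$ into the constant.
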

\vskip-0.15cm


Corollary~\ref{corollary:inexact_without_alpha} removes the need to know $\alpha$ when choosing the parameters $\eta$ and $\beta$. In the regime $\sigma_1=0$, the batch size can also be chosen arbitrarily, so no algorithmic parameter depends on $\alpha$. This universality comes at the cost of a slower rate. The dominant stochastic term scales as $\gO (K^{-\nicefrac{(\alpha-1)}{2\alpha}})$, which leads to an iteration complexity of $\gO ( \varepsilon^{\nicefrac{- 2 \alpha}{(\alpha - 1)}} )$. However, when $\alpha=2$, this recovers the optimal $\gO ( \varepsilon^{-4})$ complexity.

\section{Numerical Experiments}\label{sec:experiment}
\vspace{-2mm}
In this section, we present numerical experiments evaluating randomized \algname{Muon} with Nesterov momentum against five baselines: \algname{AdamW}, \algname{SGD} with Nesterov momentum, \algname{Muon} with Polyak momentum, \algname{Muon} with Nesterov momentum, and randomized \algname{Muon} with Polyak momentum. 
We consider two standard benchmarks: pretraining a nanoGPT model on the FineWeb dataset \citep{Penedo2024Fineweb} and training a CNN on CIFAR-10 \citep{Krizhevsky2009Learning}. For each benchmark, we include additional ablation studies in Appendix \ref{app:nano_ablation} and \ref{app:cifar_ablation} to examine the sensitivity of randomized \algname{Muon} with Nesterov momentum. Full experimental setup details for both benchmarks are deferred to Appendix \ref{app:expdetails}. Codes are available at our GitHub repository: \href{https://github.com/Xiaoran-Cheng/muon-randomized-svd}{https://github.com/Xiaoran-Cheng/muon-randomized-svd}.

\subsection{Pretraining of nanoGPT on FineWeb10B}\label{subsec:nanogpt}

We pretrain a 135M-parameter nanoGPT \citep{Jordan2024Modded} on FineWeb10B \citep{Penedo2024Fineweb} using the \texttt{modded-nanogpt} speedrun codebase of~\cite{Jordan2024Modded}, following the setup adopted in recent analyses of inexact and low-rank \algname{Muon} \citep{shulgin2025beyond,he2025low}. We report validation perplexity on the FineWeb10B validation set \citep{radford2019language,vaswani2017attention,Touvron2023LLaMA} to amplify small differences in validation loss. Each configuration is averaged over $5$ random seeds. Table~\ref{tab:nanogpt_e5} reports the final validation perplexity, and Figure~\ref{fig:nanogpt_e5} plots the convergence trajectories for the optimizer comparison and the effect of varying the randomized rank. Full implementation details are deferred to Appendix~\ref{app:expdetails_nanogpt}.

\begin{figure}[ht]
    \centering
    \begin{subfigure}[t]{0.48\textwidth}
        \centering
        \includegraphics[width=\linewidth]{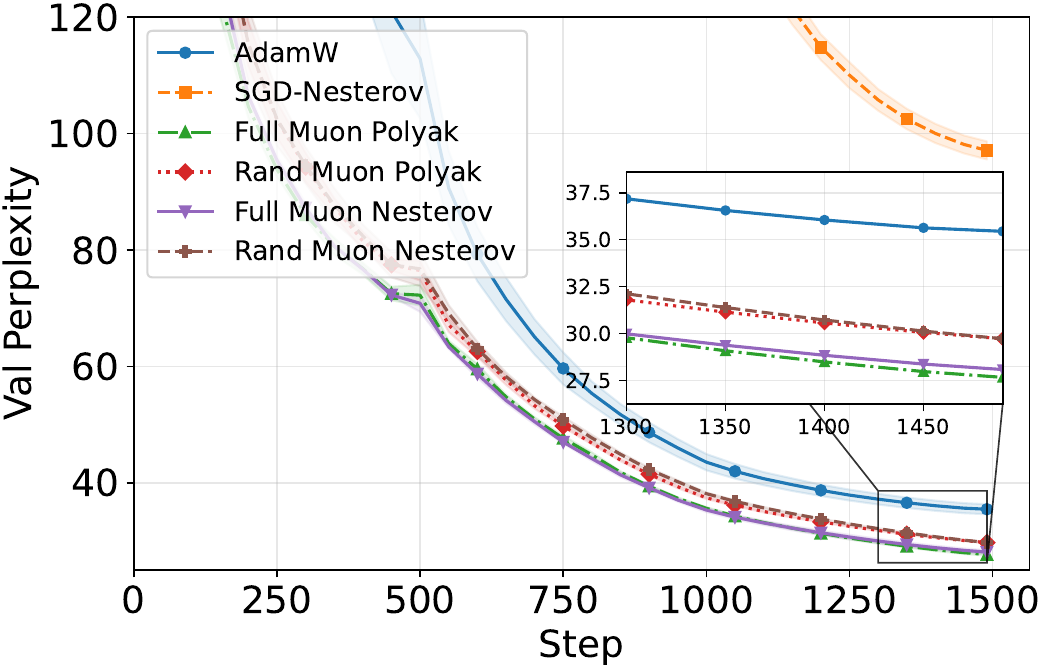}
        \caption{nanoGPT: optimizer}\label{fig:nanogpt_e5_methods}
    \end{subfigure}\hfill
    \begin{subfigure}[t]{0.48\textwidth}
        \centering
        \includegraphics[width=\linewidth]{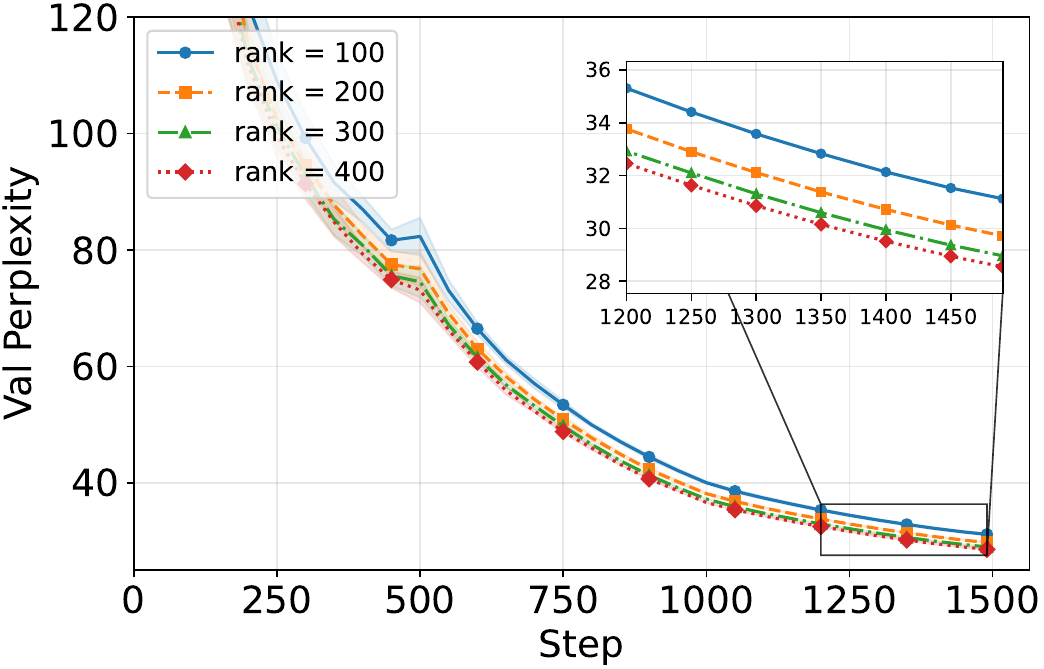}
        \caption{nanoGPT: rank $s$}\label{fig:nanogpt_abl_e2_rank}
    \end{subfigure}

    \vspace{0.5em}

    \begin{subfigure}[t]{0.48\textwidth}
        \centering
        \includegraphics[width=\linewidth]{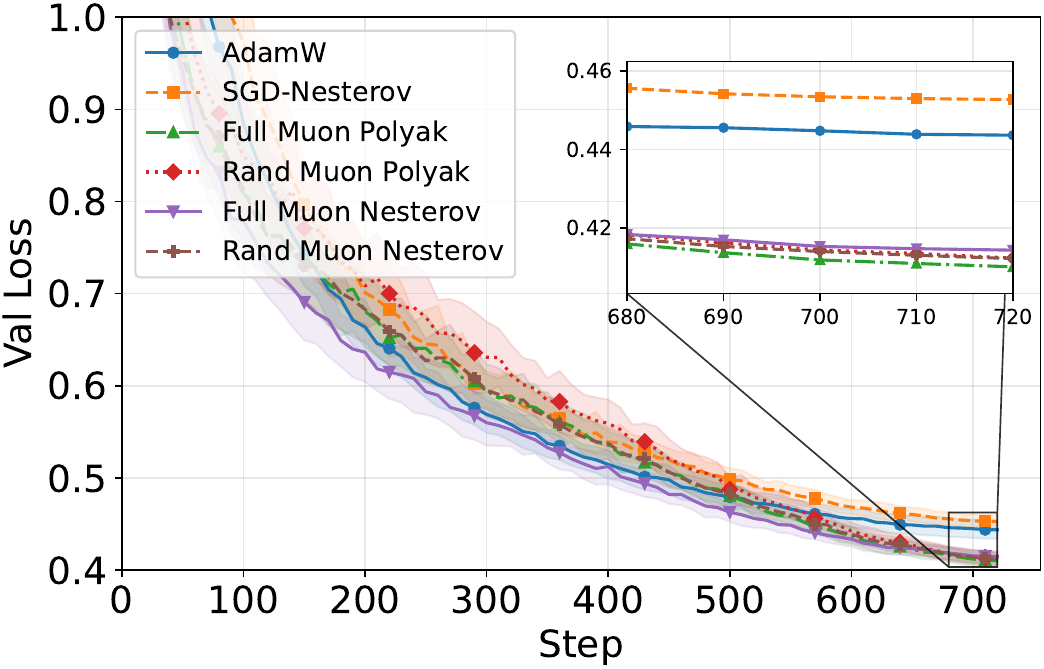}
        \caption{CIFAR-10: optimizer}\label{fig:cifar10_e5_methods}
    \end{subfigure}\hfill
    \begin{subfigure}[t]{0.48\textwidth}
        \centering
        \includegraphics[width=\linewidth]{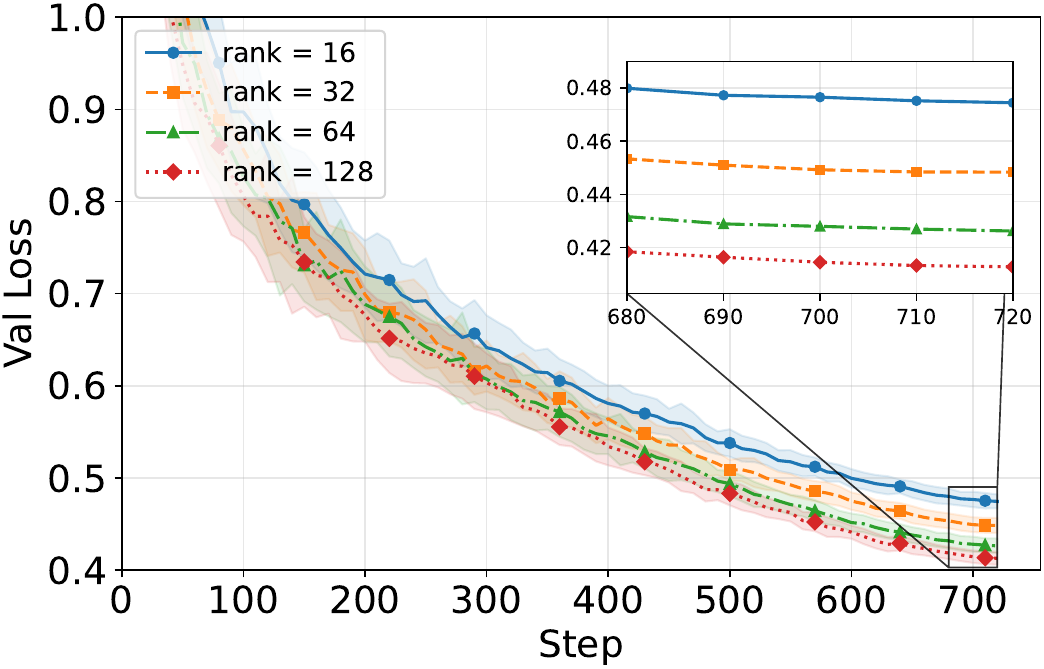}
        \caption{CIFAR-10: rank $s$}\label{fig:abl_e2_rank}
    \end{subfigure}

    \caption{\small Convergence trajectories on the two benchmarks. (a,b) nanoGPT: optimizer comparison and randomized \algname{Muon} (Nesterov) at varying randomized rank $s$, reporting validation perplexity. (c,d) CIFAR-10: same comparison reporting validation loss.}
    \label{fig:nanogpt_e5}\label{fig:cifar10_e5}
\end{figure}

Figure~\ref{fig:nanogpt_e5_methods} shows that all \algname{Muon} variants clearly outperform \algname{AdamW} and \algname{SGD}-Nesterov. Moreover, the randomized \algname{Muon} variants perform comparably to their full-space counterparts under both Polyak and Nesterov momentum. As reported in Table~\ref{tab:nanogpt_e5}, the gap between full and randomized \algname{Muon} is small under both Polyak and Nesterov momentum. This comparable result is obtained with a randomized rank of only $200$, about $26\%$ of the corresponding full rank. Figure~\ref{fig:nanogpt_abl_e2_rank} further shows that validation perplexity decreases monotonically as the randomized rank grows. Even at rank $400$, the sketch uses only about $52\%$ of the corresponding full rank while further reducing validation perplexity. Figure~\ref{fig:nanogpt_abl_e3_q} additionally shows that increasing the Newton-Schulz or PolarExpress inner-iteration count $q$ improves final perplexity monotonically, though the gains diminish beyond $q = 7$.

Despite the comparable perplexity gap, the computational efficiency gain is substantial. To quantify this, we report per-step FLOPs \citep{Hoffmann2022Training, Narayanan2021Efficient, deepseekai2026deepseekv4} in Table \ref{tab:nanogpt_e5}. Since the forward and backward pass is identical across methods, we focus on the optimizer-update components, which capture all optimizer-specific steps. Full-space \algname{Muon} requires roughly $2136$ GFLOPs per step (where $1$ GFLOP $= 10^9$ FLOPs), whereas randomized \algname{Muon} requires only about $251$ GFLOPs, which is nearly an order-of-magnitude reduction. The Polyak variant is marginally cheaper than Nesterov since Nesterov adds one extra vector combination in the momentum update.

The cost can be further reduced by using a canonical-basis sketch rather than a dense Gaussian sketch, so that each column of $\bm\Omega$ has a single nonzero entry. Then $\mM\bm\Omega$ reduces to a rescaled column selection rather than a dense matrix-matrix multiplication. With this Kaczmarz-style sparse sketch, the per-step optimizer cost drops to $217$ GFLOPs while keeping almost the same validation perplexity. As we can see from Figure~\ref{fig:nanogpt_abl_e5_projector} and Table~\ref{tab:nanogpt_e5}, the Kaczmarz-style drawing achieves nearly identical final perplexity to the Gaussian drawing and also has almost the same rate of decreasing while further reducing per-step computational cost. The sparse drawing scheme is described in detail in Algorithm~\ref{alg:kaczmarz_rand_ns_update}.

\vspace{-2mm}
\subsection{CNN on CIFAR-10}\label{subsec:cifar10}
\vspace{-2mm}
To assess randomized \algname{Muon} on a different data modality, we train a small CNN on CIFAR-10~\citep{Krizhevsky2009Learning} using the CIFAR-10 Airbench codebase of~\cite{Jordan2024Cifar94}. Each configuration is averaged over $50$ random seeds. Table~\ref{tab:cifar10_e5} reports the final test accuracy and the per-step optimizer GFLOPs, and Figure~\ref{fig:cifar10_e5} plots the validation-loss convergence trajectories. Full implementation details are deferred to Appendix~\ref{app:expdetails_cifar10}.

Figure~\ref{fig:cifar10_e5_methods} and Table~\ref{tab:cifar10_e5} show that the \algname{Muon} variants reach essentially the same test accuracy and outperform \algname{AdamW} and \algname{SGD}-Nesterov. The full and randomized variants are nearly indistinguishable on this benchmark. The convergence curves of Figure~\ref{fig:cifar10_e5_methods} are correspondingly indistinguishable across \algname{Muon} variants. This narrow separation may be due to the limited capacity of the compact CNN, which restricts the potential gains from orthogonalization-based optimization over well-tuned baselines. Figure~\ref{fig:abl_e2_rank} shows that validation loss decreases monotonically as the randomized rank grows from $s=16$ to $s=128$, with diminishing returns at larger ranks. Figure~\ref{fig:abl_e3_q} similarly shows that increasing the Newton-Schulz or PolarExpress inner-iteration count $q$ improves final loss, though the gains are marginal beyond $q = 5$.

Following the same FLOP-counting convention as in Section~\ref{subsec:nanogpt}, we focus on the per-step optimizer GFLOPs reported in Table~\ref{tab:cifar10_e5}. Full-space \algname{Muon} spends roughly $14.79$ GFLOPs per step, whereas randomized \algname{Muon} drops to about $8.07$ GFLOPs, approximately a $45\%$ reduction. The canonical-basis (Kaczmarz) sketch lowers the cost further to $7.54$ GFLOPs while attaining a marginally higher test accuracy than the Gaussian sketch. Figure~\ref{fig:abl_e5_projector} and Table~\ref{tab:cifar10_e5} show that the Kaczmarz-style drawing not only reduces computational cost but also attains a marginally lower final validation loss than the Gaussian drawing. As in the nanoGPT experiment, the Polyak variant is marginally~cheaper~than~Nesterov.

\begin{table}[t]
\vspace{-4mm}
    \centering
    \small
    \setlength{\tabcolsep}{4pt}
    \caption{\small Final validation perplexity for nanoGPT and test accuracy for CIFAR-10, and the per-step optimizer GFLOPs. Best metric and lowest GFLOPs among \algname{Muon} variants  are in \textbf{bold}.}
    \label{tab:nanogpt_e5}\label{tab:cifar10_e5}
    \begin{tabular*}{\linewidth}{@{\extracolsep{\fill}}lcccc}
        \toprule
        \multirow{2}{*}[-2pt]{\textbf{Method}} & \multicolumn{2}{c}{\textbf{nanoGPT}} & \multicolumn{2}{c}{\textbf{CIFAR-10}} \\
         \cmidrule(lr){2-3} \cmidrule(lr){4-5}
         & \textbf{Val.~Perplexity} & \textbf{GFLOPs} & \textbf{Test Accuracy} & \textbf{GFLOPs} \\
        \midrule
        \algname{AdamW}                      & $35.4402 \pm 0.8650$ & $7.2621$    & $0.9184 \pm 0.0020$ & $0.0237$ \\
        \algname{SGD} (Nesterov)             & $97.1136 \pm 1.5568$ & $3.1123$    & $0.9211 \pm 0.0018$ & $0.0118$ \\
        \algname{Muon} (Polyak)              & $\mathbf{27.6600 \pm 0.2075}$ & $2135.5947$ & $\mathbf{0.9375 \pm 0.0014}$ & $14.7920$ \\
        Rand \algname{Muon} (Polyak)         & $29.7155 \pm 0.0925$ & $250.8060$  & $0.9365 \pm 0.0015$ & $8.0723$ \\
        \algname{Muon} (Nesterov)            & $28.0773 \pm 0.3372$ & $2135.7551$ & $0.9370 \pm 0.0016$ & $14.7960$ \\
        Rand \algname{Muon} (Nes.-Gaussian)  & $29.7168 \pm 0.1004$ & $250.9665$  & $0.9362 \pm 0.0015$ & $8.0763$ \\
        Rand \algname{Muon} (Nes.-Kaczmarz)  & $29.7167 \pm 0.1404$ & $\mathbf{217.4581}$ & $0.9366 \pm 0.0015$ & $\mathbf{7.5373}$ \\
        \bottomrule
    \end{tabular*}%
\end{table}
\vspace{-2mm}

\begin{figure}[t]
    \centering
    \begin{subfigure}[t]{0.48\textwidth}
        \centering
        \includegraphics[width=\linewidth]{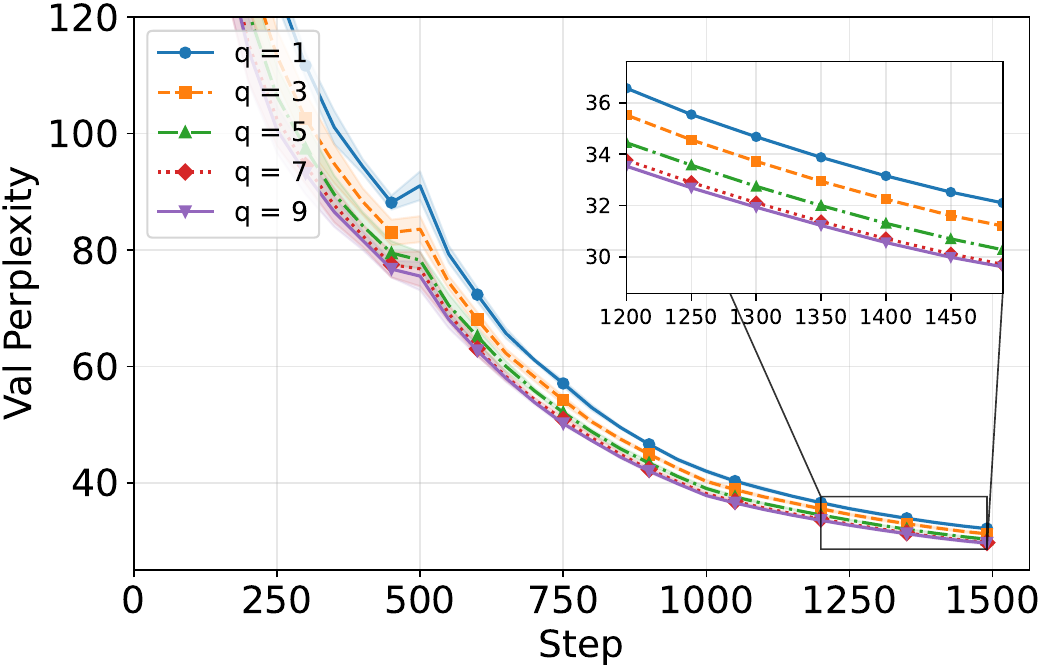}
        \caption{nanoGPT: $q$}\label{fig:nanogpt_abl_e3_q}
    \end{subfigure}\hfill
    \begin{subfigure}[t]{0.48\textwidth}
        \centering
        \includegraphics[width=\linewidth]{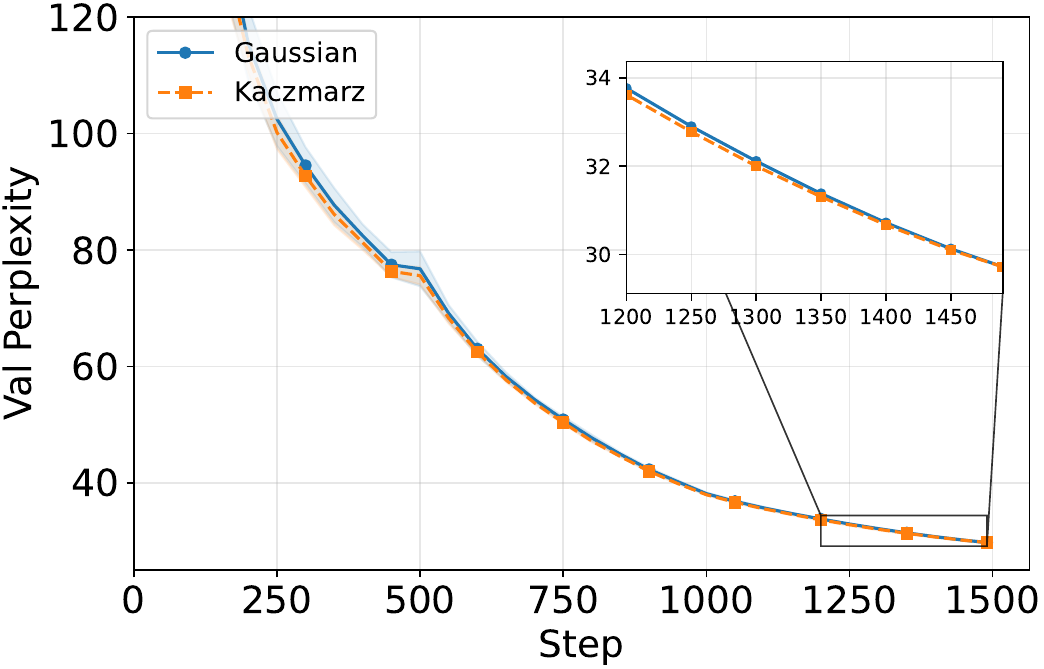}
        \caption{nanoGPT: Sketching}\label{fig:nanogpt_abl_e5_projector}
    \end{subfigure}

    \vspace{0.5em}

    \begin{subfigure}[t]{0.48\textwidth}
        \centering
        \includegraphics[width=\linewidth]{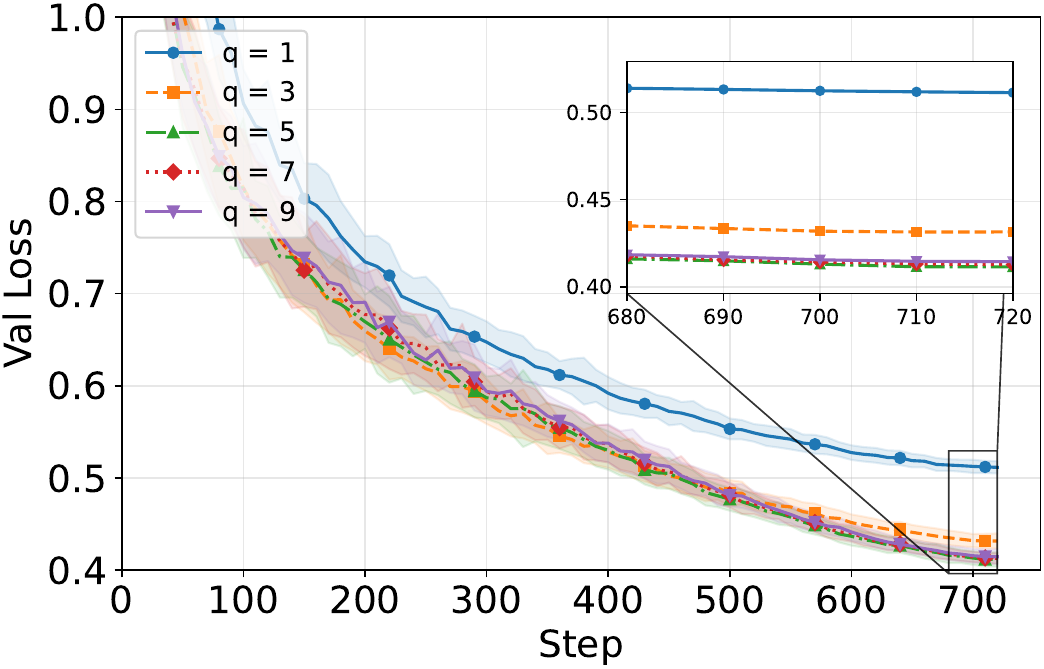}
        \caption{CIFAR-10: $q$}\label{fig:abl_e3_q}
    \end{subfigure}\hfill
    \begin{subfigure}[t]{0.48\textwidth}
        \centering
        \includegraphics[width=\linewidth]{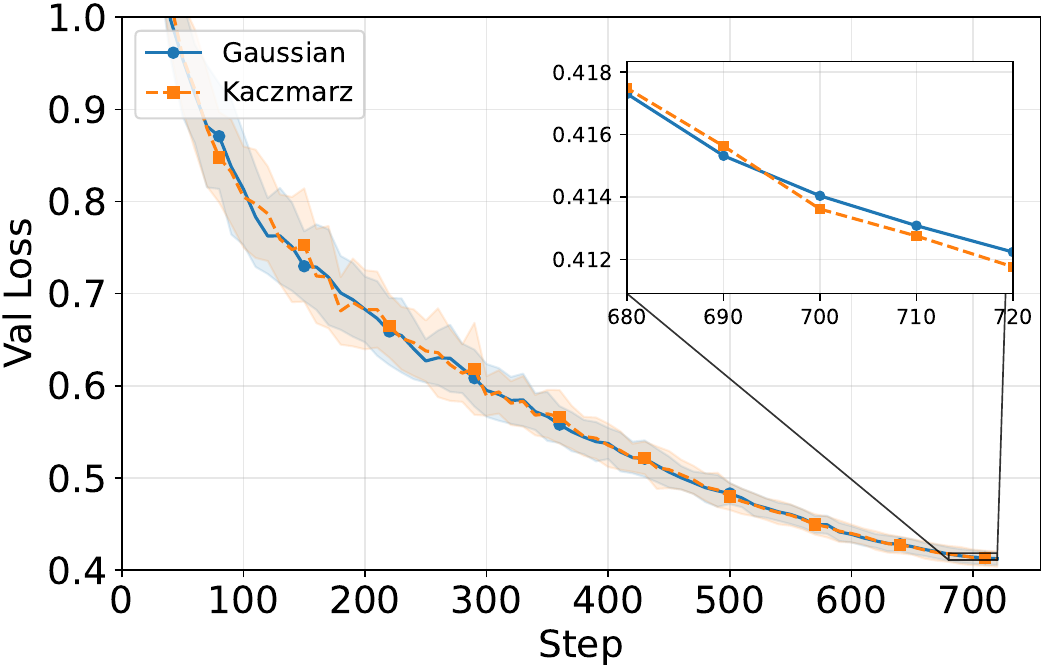}
        \caption{CIFAR-10: Sketching}\label{fig:abl_e5_projector}
    \end{subfigure}

    \caption{\small (a,b) nanoGPT: validation perplexity when varying the Newton-Schulz or PolarExpress inner-iteration count $q$ and the randomized sketching. (c,d) CIFAR-10: validation loss for the same two ablations.}
    \label{fig:ablations_main}
\end{figure}

\vspace{-2mm}
\section{Conclusion}
\vspace{-2mm}
We developed a convergence theory for \algname{Muon} that incorporates Nesterov momentum, inexact polar decomposition, and heavy-tailed stochastic gradients in non-convex matrix optimization.  
Our analysis introduces a unified inexact-polar framework that captures practical approximations such as Newton-Schulz, while quantifying how approximation errors influence the optimization dynamics.  
Under generalized heavy-tailed noise, we established an optimal $\gO \left(\varepsilon^{-(3\alpha-2)/(\alpha-1)} \right)$ iteration and sample complexity for finding an $\varepsilon$-stationary point.  
Beyond full-space polar decomposition, we further analyzed a randomized low-rank polar procedure that remains compatible with our theory and substantially reduces optimizer-side computational cost.  
Experiments on nanoGPT \mbox{pretraining}~and~CIFAR-10 demonstrated the practical effectiveness of the proposed inexact and randomized variants.

\newpage

\bibliography{iclr2026_conference}


\newpage
\appendix
\part*{Appendix}

\tableofcontents

\newpage
\section{Preparation Lemmas}

\begin{lemma}
For $\mA \in \R^{m \times n}$, let $d_0 \eqdef \min\{m,n\}$. Then
\begin{equation*}
\| \mA \|_F \leq \| \mA \|_* \leq \sqrt{\operatorname{rank}(\mA)} \| \mA \|_F\quad\quad \text{and}\quad\quad \| \mA \|_F \leq \sqrt{d_0} \| \mA\|_{\mathrm{op}}.
\end{equation*}
\end{lemma}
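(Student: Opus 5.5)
Everything reduces to comparing norms of the vector of singular values. Let $\mA = \mU \bm{\Sigma} \mV^\top$ be a compact SVD with singular values $\sigma_1 \ge \cdots \ge \sigma_r > 0$, where $r = \operatorname{rank}(\mA) \le d_0$. By unitary invariance and the definitions,
\begin{equation*}
\|\mA\|_* = \textstyle\sum_{i=1}^r \sigma_i, \qquad \|\mA\|_F = \big(\sum_{i=1}^r \sigma_i^2\big)^{1/2}, \qquad \|\mA\|_{\mathrm{op}} = \sigma_1.
\end{equation*}
For the Frobenius identity, note $\|\mA\|_F^2 = \mathrm{tr}(\mA^\top \mA) = \mathrm{tr}(\mV \bm{\Sigma}^2 \mV^\top) = \mathrm{tr}(\bm{\Sigma}^2)$. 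So each claimed inequality is a statement about the vector $\bm{\sigma} = (\sigma_1,\dots,\sigma_r) \in \R^r$: namely $\|\bm\sigma\|_2 \le \|\bm\sigma\|_1 \le \sqrt{r}\,\|\bm\sigma\|_2$ and $\|\bm\sigma\|_2 \le \sqrt{d_0}\,\|\bm\sigma\|_\infty$.

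The first inequality follows from expanding $\big(\sum_i \sigma_i\big)^2 = \sum_i \sigma_i^2 + 2\sum_{i<j}\sigma_i\sigma_j \ge \sum_i \sigma_i^2$, which uses the nonnegativity of the $\sigma_i$. The second is Cauchy--Schwarz applied to $\langle \bm 1, \bm\sigma\rangle$ in $\R^r$, giving $\sum_i \sigma_i \le \sqrt{r}\,\big(\sum_i \sigma_i^2\big)^{1/2}$. The third uses $\sigma_i \le \sigma_1$ for each $i$, so $\sum_{i=1}^r \sigma_i^2 \le r\,\sigma_1^2 \le d_0\,\sigma_1^2$, since $r \le d_0$.

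The only degenerate case is $\mA = 0$, where every norm is zero and all three inequalities hold trivially. There is no real obstacle here. The one point to state explicitly is the identification of the three matrix norms with the $\ell_1$, $\ell_2$ and $\ell_\infty$ norms of the singular-value vector, together with the fact that $\operatorname{rank}(\mA) \le d_0$.
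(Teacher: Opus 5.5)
Your proof is correct: identifying the nuclear, Frobenius, and operator norms with the $\ell_1$, $\ell_2$, and $\ell_\infty$ norms of the singular-value vector, and using $\operatorname{rank}(\mA)\le d_0$, is the standard argument. The paper states this lemma without proof as a well-known fact, so there is no separate argument to compare against.
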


\begin{lemma}
For matrices $\mA, \mB \in \R^{m \times n}$, we have $\left|\left\langle \mA, \mB \right\rangle\right| \leq \left\| \mA \right\|_F \left\| \mB \right\|_F$.
\end{lemma}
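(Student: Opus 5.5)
The statement to prove is the second preparation lemma, the Cauchy--Schwarz inequality for the Frobenius inner product: $|\langle \mA,\mB\rangle|\le\|\mA\|_F\|\mB\|_F$. The plan is to reduce it to the vector Cauchy--Schwarz inequality by vectorization. Write $\langle \mA,\mB\rangle=\operatorname{tr}(\mA^\top\mB)=\sum_{i,j}A_{ij}B_{ij}$. This equals $\langle \operatorname{vec}(\mA),\operatorname{vec}(\mB)\rangle$ in $\R^{mn}$. Likewise $\|\mA\|_F^2=\sum_{i,j}A_{ij}^2=\|\operatorname{vec}(\mA)\|_2^2$, and similarly for $\mB$. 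So the claim is exactly the Euclidean Cauchy--Schwarz inequality in $\R^{mn}$.

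For a self-contained argument I would prove that vector inequality directly. If $\mB=0$, both sides vanish and there is nothing to show. Otherwise, consider the nonnegative quadratic in $t\in\R$:
\begin{equation*}
0\le\|\mA-t\mB\|_F^2=\|\mA\|_F^2-2t\langle\mA,\mB\rangle+t^2\|\mB\|_F^2 .
\end{equation*}
Substituting $t=\langle\mA,\mB\rangle/\|\mB\|_F^2$ gives
\begin{equation*}
0\le\|\mA\|_F^2-\frac{\langle\mA,\mB\rangle^2}{\|\mB\|_F^2},
\end{equation*}
that is, $\langle\mA,\mB\rangle^2\le\|\mA\|_F^2\|\mB\|_F^2$. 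Taking square roots yields the result.

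I expect no real obstacle here. The only point needing care is the expansion of $\|\mA-t\mB\|_F^2$, which uses bilinearity and symmetry of the trace inner product, $\operatorname{tr}(\mA^\top\mB)=\operatorname{tr}(\mB^\top\mA)$, together with the separate handling of the degenerate case $\mB=0$.
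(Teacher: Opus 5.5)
Your proof is correct. The paper states this lemma without proof, treating it as the standard Cauchy--Schwarz inequality for the Frobenius (trace) inner product, and your argument is exactly the textbook justification: identify the trace inner product with the Euclidean one via vectorization, then use the nonnegativity of $\|\mA - t\mB\|_F^2$ at the minimizing $t$, handling $\mB = \bm{0}$ separately.
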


\begin{lemma}[\textit{Jensen's Inequality}]
Let $X$ be a random variable for which the following expectations exist.
If $\varphi$ is convex, then
\begin{equation}\label{eq:jensen_convex}
\varphi\left( \Exp{X} \right) \leq \Exp{\varphi(X)}.
\end{equation}
If $\varphi$ is concave, then
\begin{equation}\label{eq:jensen_concave}
\Exp{\varphi(X)} \leq \varphi\left( \Exp{X} \right).
\end{equation}		
\end{lemma}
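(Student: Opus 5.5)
The plan is to prove the convex case \eqref{eq:jensen_convex} with a supporting-hyperplane argument at the point $\mu \eqdef \Exp{X}$, and then deduce the concave case \eqref{eq:jensen_concave} by applying the convex case to $-\varphi$. I will treat $X$ as taking values in a convex set $\mathcal{D}\subseteq\R^d$ (scalars, or vectors/matrices viewed in $\R^{m\times n}\cong\R^{mn}$ with the Frobenius inner product). Then $\mu\in\mathcal{D}$, because $X\in\mathcal{D}$ almost surely and $\mathcal{D}$ is convex. The hypothesis that the relevant expectations exist means $\Exp{\|X\|}<\infty$ and that $\Exp{\varphi(X)}$ is well defined.

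The key step is a supporting affine minorant of $\varphi$ at $\mu$. If $\mu$ lies in the interior of $\mathcal{D}$ (relative interior in general), a convex function admits a subgradient $\vg\in\partial\varphi(\mu)$. In the scalar case one may take any value between the left and right derivatives, which exist by monotonicity of difference quotients. The subgradient gives
\begin{equation*}
\varphi(x) \geq \varphi(\mu) + \la \vg, x-\mu\ra \qquad \text{for all } x\in\mathcal{D}.
\end{equation*}
Substituting $x=X$ and taking expectations, linearity gives $\Exp{\la \vg, X-\mu\ra} = \la \vg, \Exp{X}-\mu\ra = 0$. Hence $\Exp{\varphi(X)} \geq \varphi(\mu)=\varphi(\Exp{X})$. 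The lower bound by an integrable affine function also shows that $\Exp{\varphi(X)}$ is well defined in $(-\infty,+\infty]$, so no integrability issue arises beyond what is assumed.

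The main obstacle is the boundary case, where $\mu$ lies on the relative boundary of $\mathcal{D}$ and a finite subgradient may fail to exist. My first attempt is to reduce the dimension. If $\mu$ is on the boundary, a supporting hyperplane $\{\la \bm a, x\ra = \la \bm a,\mu\ra\}$ of $\mathcal{D}$ at $\mu$ satisfies $\la \bm a, X-\mu\ra \le 0$ almost surely and has zero mean. Therefore $X$ lies almost surely on the face $\mathcal{D}\cap\{\la \bm a, x\ra=\la \bm a,\mu\ra\}$, which is lower-dimensional. I then induct on dimension; in dimension zero the claim is trivial since $X=\mu$ almost surely. In every application in this paper, $\varphi$ is a power $t\mapsto t^{p}$ or a norm on $[0,\infty)$ or on a whole space, where subgradients exist everywhere (at $0$ for $t^p$, $p\ge1$, take slope $0$). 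So in practice this step can be bypassed.

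Finally, for concave $\varphi$, the function $-\varphi$ is convex, and the convex case gives $-\varphi(\Exp{X}) \le \Exp{-\varphi(X)} = -\Exp{\varphi(X)}$. Rearranging yields \eqref{eq:jensen_concave}.
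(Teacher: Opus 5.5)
Your argument is correct. The paper itself gives no proof of this lemma; it is listed among the preparation lemmas as a standard fact, so there is no competing route to compare. What you wrote is the textbook supporting-hyperplane proof. Your dimension-reduction treatment of the relative-boundary case makes it valid for an arbitrary convex domain in finite dimensions, which is more generality than the paper ever uses.

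Three side remarks need small corrections.

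First, ``$\mu\in\mathcal{D}$ because $\mathcal{D}$ is convex'' is immediate when $\mathcal{D}$ is closed. In general, for instance for non-closed $\mathcal{D}$ in dimension $\ge 2$, it needs justification: separate $\mu$ from $\mathcal{D}$ and recurse on a lower-dimensional slice, exactly as in your boundary case. Alternatively, read it as part of the hypothesis that $\varphi(\mathbb{E}[X])$ is defined.

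Second, the boundary step cannot be bypassed in the paper's applications. The paper uses the concave case for $t\mapsto\sqrt{t}$, $t\mapsto t^{p/2}$ with $p\le 2$, and $z\mapsto(z_1+\cdots+z_T)^{1/p}$ on nonnegative orthants. For these, $-\varphi$ has no finite subgradient at $0$, so ``subgradients exist everywhere'' is false there. The boundary case is nevertheless trivial, since a nonnegative random variable or vector with mean $0$ vanishes almost surely.

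Third, the paper actually invokes conditional versions, given $\mathcal{H}_T$ or $\mathcal{G}_k$, which the stated lemma does not literally cover. These follow from your unconditional argument applied to a regular conditional distribution of $X$. Such a distribution exists because the relevant quantities live in Euclidean spaces with closed domains.
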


\begin{lemma}
For $x, y \geq 0$ and $\alpha \in (1, 2]$, we have $\left( x + y \right)^{\nicefrac{1}{\alpha}} \leq x^{\nicefrac{1}{\alpha}} + y^{\nicefrac{1}{\alpha}}$.
\end{lemma}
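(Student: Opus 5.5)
The statement is the elementary subadditivity of $t\mapsto t^{1/\alpha}$ on $[0,\infty)$. The plan is to use the fact that a concave function vanishing at the origin is subadditive, with the exponent $1/\alpha\in[\nicefrac{1}{2},1)$.

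First dispose of the trivial case. If $x+y=0$ then $x=y=0$ and both sides vanish, so assume $x+y>0$.

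For the main case, set $a \eqdef x/(x+y)$ and $b \eqdef y/(x+y)$, so that $a,b\in[0,1]$ and $a+b=1$. Since $1/\alpha\le 1$ and $a,b\in[0,1]$, we have $a^{1/\alpha}\ge a$ and $b^{1/\alpha}\ge b$. Hence
\begin{equation*}
a^{1/\alpha}+b^{1/\alpha}\ge a+b=1 .
\end{equation*}
Multiplying by $(x+y)^{1/\alpha}>0$ and using $(x+y)^{1/\alpha}a^{1/\alpha}=x^{1/\alpha}$ (and likewise for $b$) gives $x^{1/\alpha}+y^{1/\alpha}\ge (x+y)^{1/\alpha}$, which is the claim.

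An alternative route is to fix $y$ and differentiate $g(x)=x^{1/\alpha}+y^{1/\alpha}-(x+y)^{1/\alpha}$. Then $g(0)=0$, and $g'(x)=\tfrac1\alpha\bigl(x^{1/\alpha-1}-(x+y)^{1/\alpha-1}\bigr)\ge0$ because $1/\alpha-1\le0$. This version needs slightly more care at $x=0$, where $g'$ blows up, so I would present the normalization argument. There is no real obstacle here; the only points to state carefully are the zero case and the inequality $a^{1/\alpha}\ge a$ for $a\in[0,1]$.
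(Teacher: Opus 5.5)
Your proof is correct and complete: the rescaling $a=x/(x+y)$, the bound $a^{1/\alpha}\ge a$ on $[0,1]$, and the separate zero case are all sound, and the argument in fact works for every $\alpha\ge 1$. The paper states this lemma without proof, so there is nothing to compare against.

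Where the paper uses it (Lemma~\ref{lemma:bound_S_k}), it needs the many-term form $\bigl(\sum_s c_s\bigr)^{1/\alpha}\le\sum_s c_s^{1/\alpha}$ for $c_s\ge 0$. Your normalization argument gives this directly: divide each $c_s$ by $\sum_s c_s$ so the weights sum to one.
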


\begin{lemma}
For $\mM \in \R^{m \times n}$ and $\vg \sim \gN_m(\bm{0}, \mI)$, we have	\begin{equation}\label{eq:normal_frob_lowerbound}
\Exp{ \left\| \vg^\top \mM \right\|} \geq \sqrt{\frac{2}{\pi}} \| \mM \|_F.
\end{equation}	
\end{lemma}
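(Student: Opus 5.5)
Since $\vg^\top \mM$ is a row vector in $\R^{n}$, I will compute its Euclidean norm. The plan is to identify the distribution of this vector exactly and then lower-bound the expected Euclidean norm of a Gaussian vector by combining a first-moment computation with a convexity argument.

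First, $\vg^\top \mM = (\mM^\top \vg)^\top$ and $\mM^\top \vg \sim \gN_n(\bm{0}, \mM^\top \mM)$. Writing the compact SVD $\mM = \mU \bm{\Sigma} \mV^\top$ with singular values $\sigma_1,\dots,\sigma_r$, rotation invariance of the standard Gaussian gives that $\mU^\top \vg$ restricted to the first $r$ coordinates is $\gN_r(\bm{0},\mI)$. Hence
\begin{equation*}
\|\vg^\top \mM\| = \|\bm{\Sigma}\mU^\top \vg\| \overset{d}{=} \Big(\sum_{i=1}^r \sigma_i^2 z_i^2\Big)^{1/2}, \qquad z_i \overset{\text{iid}}{\sim} \gN(0,1).
\end{equation*}

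Second, I need $\Exp{(\sum_i \sigma_i^2 z_i^2)^{1/2}} \ge \sqrt{2/\pi}\,(\sum_i \sigma_i^2)^{1/2} = \sqrt{2/\pi}\|\mM\|_F$. To show this, introduce an independent Rademacher or uniform direction trick. The cleanest route is to observe that for any unit vector $\bm{u}\in\R^r$,
\begin{equation*}
\Big(\sum_i \sigma_i^2 z_i^2\Big)^{1/2} \ge \Big|\sum_i u_i \sigma_i z_i\Big|
\end{equation*}
by Cauchy--Schwarz, which holds for every fixed $\bm u$. Choose $u_i = \sigma_i/\|\bm\sigma\|$. Then $\sum_i u_i\sigma_i z_i = \|\bm\sigma\|^{-1}\sum_i \sigma_i^2 z_i$ is Gaussian with standard deviation $\|\bm\sigma\|^{-1}(\sum_i\sigma_i^4)^{1/2}$. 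This only gives $\sqrt{2/\pi}\,\|\bm\sigma\|_4^2/\|\bm\sigma\|_2$, which is too weak, so a deterministic $\bm u$ is not enough.

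Instead, I will use the fact that $\bm{x}\mapsto(\sum_i \sigma_i^2 x_i^2)^{1/2}$ is concave in $(x_1^2,\dots,x_r^2)$ through the weights. Set $w_i = \sigma_i^2/\|\mM\|_F^2$, so that $\sum_i w_i = 1$. Then
\begin{equation*}
\Big(\sum_i \sigma_i^2 z_i^2\Big)^{1/2} = \|\mM\|_F \Big(\sum_i w_i z_i^2\Big)^{1/2} \ge \|\mM\|_F \sum_i w_i |z_i|,
\end{equation*}
where the inequality is Jensen's inequality \eqref{eq:jensen_concave} applied to the concave function $\sqrt{\cdot}$ with the probability weights $w_i$ (pointwise in $z$), since $\sqrt{\sum_i w_i z_i^2} \ge \sum_i w_i \sqrt{z_i^2}$. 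Taking expectations and using $\Exp{|z_i|} = \sqrt{2/\pi}$ gives exactly $\sqrt{2/\pi}\,\|\mM\|_F$.

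The case $\mM=\bm 0$ is trivial. The only step needing care is choosing this pointwise weighted Jensen bound instead of the naive Jensen bound in expectation, which goes the wrong direction; with that choice no step is a real obstacle.
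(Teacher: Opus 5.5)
Your proof is correct and follows the paper's argument: reduce via the compact SVD to $\mathbb{E}\bigl[(\sum_i \sigma_i^2 z_i^2)^{1/2}\bigr]$, apply concavity of $\sqrt{\cdot}$ pointwise with weights $\sigma_i^2/\|\mM\|_F^2$, and use $\mathbb{E}|z_i| = \sqrt{2/\pi}$. The deterministic-direction detour you tried and abandoned is unnecessary but does no harm.
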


\begin{proof}
If $\mM=\bm{0}$, the result is immediate. Otherwise, let $\mM = \mU \bm{\Sigma} \mV^\top$ be the compact singular value decomposition of $\mM$, with rank $r$ and singular values $\sigma_1,\ldots,\sigma_r>0$. Define $\widehat{\vg} \eqdef \mU^\top \vg$. Since $\mU$ has orthonormal columns, $\widehat{\vg} \sim \gN_r(\bm{0}, \mI)$. Hence
\begin{align*}
\Exp{\| \vg^\top \mM \|}
&= \Exp{ \sqrt{ \sum_{i = 1}^r \sigma_i^2 \widehat{\vg}_i^2}} = \sqrt{\sum_{j = 1}^r \sigma_j^2}
\Exp{ \sqrt{ \sum_{i = 1}^r \frac{\sigma_i^2}{\sum_{j = 1}^r \sigma_j^2} \widehat{\vg}_i^2}} \\
&\geq \sqrt{\sum_{j = 1}^r \sigma_j^2}
\sum_{i = 1}^r \frac{\sigma_i^2}{\sum_{j = 1}^r \sigma_j^2}
\Exp{ \left| \widehat{\vg}_i \right| } = \sqrt{\frac{2}{\pi}} \sqrt{\sum_{j = 1}^r \sigma_j^2} = \sqrt{\frac{2}{\pi}} \| \mM \|_F,	
\end{align*}
where the inequality uses the concavity of $\sqrt{\cdot}$ with weights $\sigma_i^2/\sum_{j=1}^r \sigma_j^2$.	
\end{proof}

\begin{lemma}\label{lemma:NSpoly_prop}
For $\varphi(x) \eqdef \frac{3}{2}x - \frac{1}{2}x^3$, we have, for every $x \in [0,1]$,
\begin{equation*}
0\le \varphi(x)\le 1, \qquad \varphi(x)\ge x, \qquad \varphi'(x)\ge 0.
\end{equation*}
\end{lemma}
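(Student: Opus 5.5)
Since $\varphi(x)=\tfrac{1}{2}x(3-x^2)$ is a cubic polynomial, all three claims can be checked directly on $[0,1]$. There is no need for earlier results; the argument is elementary calculus.

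\emph{Derivative.} We compute $\varphi'(x)=\tfrac{3}{2}(1-x^2)$. This is nonnegative for $|x|\le 1$, so in particular $\varphi'(x)\ge 0$ on $[0,1]$. Hence $\varphi$ is nondecreasing on $[0,1]$.

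\emph{Range.} Monotonicity gives $\varphi(0)\le\varphi(x)\le\varphi(1)$ for $x\in[0,1]$. Since $\varphi(0)=0$ and $\varphi(1)=\tfrac{3}{2}-\tfrac{1}{2}=1$, we get $0\le\varphi(x)\le 1$.

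\emph{Comparison with the identity.} We write $\varphi(x)-x=\tfrac{1}{2}x-\tfrac{1}{2}x^3=\tfrac{1}{2}x(1-x)(1+x)$. Each factor is nonnegative on $[0,1]$, so $\varphi(x)\ge x$.

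There is no real obstacle in this argument. The only point to watch is that the bounds rely on the domain restriction $x\in[0,1]$: we have $\varphi'<0$ and $\varphi(x)<x$ for $x>1$. The restriction is exactly why later results normalize by $\delta\ge\|\mM\|_{\mathrm{op}}$, which places the scaled singular values in $[0,1]$ so that the lemma applies to them.
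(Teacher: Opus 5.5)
Your proof is correct and follows essentially the same elementary verification as the paper: the same derivative $\varphi'(x)=\tfrac32(1-x^2)$ and the same factorization of $\varphi(x)-x$. The only difference is that you obtain $0\le\varphi\le1$ from monotonicity and the endpoint values $\varphi(0)=0$, $\varphi(1)=1$. The paper instead factors $1-\varphi(x)=\tfrac12(1-x)^2(2+x)$ and gets $\varphi\ge0$ from $\varphi(x)\ge x\ge0$.
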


\begin{proof}
Note that for $x \in [0,1]$,
\begin{equation*}
\varphi(x)-x = \frac{1}{2}x(1-x^2)\ge 0,
\end{equation*}
so $\varphi(x)\ge x$. Also,
\begin{equation*}
1-\varphi(x) = 1-\frac{3}{2}x+\frac{1}{2}x^3 = \frac{1}{2}(1-x)^2(2+x)\ge 0,
\end{equation*}
using $x \in [0,1]$. Therefore $\varphi(x) \leq 1$. Finally,
\begin{equation*}
\varphi'(x)=\frac{3}{2}(1-x^2)\ge 0 \qquad\text{for }x\in[0,1].
\end{equation*}
\end{proof}

\begin{lemma}\label{lemma:quintic_NSpoly_prop}
For $\varphi(x) \eqdef \frac{1}{8}\left(15x-10x^3+3x^5\right)$, we have, for every $x\in[0,1]$,
\begin{equation*}
0\le \varphi(x)\le 1, \qquad \varphi(x)\ge x, \qquad \varphi'(x)\ge 0.
\end{equation*}
\end{lemma}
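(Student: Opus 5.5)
The statement is the quintic analogue of Lemma~\ref{lemma:NSpoly_prop}. I would prove its three claims for $x\in[0,1]$ by direct polynomial factorizations, exactly as in the cubic case. No earlier results are needed beyond elementary algebra.

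\emph{Monotonicity.} First compute
\begin{equation*}
\varphi'(x)=\tfrac{1}{8}\left(15-30x^2+15x^4\right)=\tfrac{15}{8}\left(1-x^2\right)^2 .
\end{equation*}
This is a perfect square times a positive constant, so $\varphi'(x)\ge 0$ on $[0,1]$, and in fact on all of $\mathbb{R}$.

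\emph{Upper and lower bounds.} Since $\varphi(0)=0$ and $\varphi'\ge 0$, we get $\varphi(x)\ge 0$ on $[0,1]$. Since $\varphi(1)=\tfrac{1}{8}(15-10+3)=1$ and $\varphi$ is nondecreasing, we get $\varphi(x)\le 1$ on $[0,1]$. As an explicit check, $1-\varphi(x)$ vanishes to third order at $x=1$, because $\varphi'(1)=\varphi''(1)=0$. Hence
\begin{equation*}
8\left(1-\varphi(x)\right)=(1-x)^3\left(3x^2+9x+8\right),
\end{equation*}
which can be verified by expanding. The right-hand side is nonnegative for $x\in[0,1]$.

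\emph{Comparison with the identity.} Write
\begin{equation*}
\varphi(x)-x=\tfrac{1}{8}\left(7x-10x^3+3x^5\right)=\tfrac{x}{8}\left(3x^4-10x^2+7\right)=\tfrac{x}{8}\left(1-x^2\right)\left(7-3x^2\right).
\end{equation*}
Each factor is nonnegative on $[0,1]$, since $7-3x^2\ge 4$ there. Hence $\varphi(x)\ge x$.

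The only step requiring any care is getting the factorizations right, namely the cubic factor at $x=1$ in $1-\varphi$. Even that can be bypassed using the monotonicity argument combined with $\varphi(1)=1$. There is no real obstacle.
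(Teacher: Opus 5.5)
Your proposal is correct and follows essentially the same route as the paper. You use the identical factorizations $\varphi(x)-x=\tfrac{x}{8}(1-x^2)(7-3x^2)$, $8(1-\varphi(x))=(1-x)^3(3x^2+9x+8)$, and $\varphi'(x)=\tfrac{15}{8}(1-x^2)^2$; the only difference is that you obtain $\varphi\ge 0$ from monotonicity and $\varphi(0)=0$, whereas the paper deduces it from $\varphi(x)\ge x\ge 0$.
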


\begin{proof}
First, we show that $\varphi(x)\ge x$. Indeed,
\begin{equation*}
\varphi(x)-x = \frac{1}{8}\left(15x-10x^3+3x^5\right)-x  = \frac{1}{8}x\left(7-10x^2+3x^4\right) = \frac{1}{8}x(1-x^2)(7-3x^2).
\end{equation*}
Since $x\in[0,1]$, we have $x\ge 0$, $1-x^2\ge 0$, and $7-3x^2\ge 4>0$. Hence $\varphi(x)\ge x$. In particular, since $x\ge 0$, this also implies $\varphi(x)\ge 0$.

Next,
\begin{equation*}
1-\varphi(x) = 1-\frac{1}{8}\left(15x-10x^3+3x^5\right) = \frac{1}{8}\left(8-15x+10x^3-3x^5\right) = \frac{1}{8}(1-x)^3(8+9x+3x^2).
\end{equation*}
Since $x\in[0,1]$, we have $(1-x)^3\ge 0$ and $8+9x+3x^2>0$. Thus $\varphi(x)\le 1$. Finally,
\begin{align*}
\varphi'(x) = \frac{1}{8}\left(15-30x^2+15x^4\right) = \frac{15}{8}(1-x^2)^2.
\end{align*}
Hence $\varphi'(x)\ge 0$ for every $x\in[0,1]$.
\end{proof}

\begin{lemma}\label{lemma:bound_martingle}
Let $p \in (1,2]$, and let $\mW_t \in \R^{m \times n}$ be a matrix martingale difference sequence with finite $p$-th moments with respect to the natural filtration
$\mathcal F_t=\sigma(\mW_1,\ldots,\mW_t)$, meaning
\begin{equation*}
\Exp{\mW_t \mid \mathcal F_{t-1}} = \bm{0}.
\end{equation*}
Then, for any $T\ge 1$,
\begin{equation}\label{eq:buckholder_extension}
\Exp{ \left\| \sum_{t = 1}^T \mW_t \right\|_F } \leq 2 \sqrt{\pi}\Exp{ \left( \sum_{t = 1}^T \| \mW_t \|_F^p \right)^{\nicefrac{1}{p}}}.
\end{equation}	
\end{lemma}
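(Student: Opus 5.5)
The plan is to reduce the matrix martingale to a family of scalar (or vector) martingales by Gaussian randomization, apply a one-dimensional Burkholder/von Bahr–Esseen-type bound, and then undo the randomization. The constant $2\sqrt{\pi}$ suggests exactly this: a factor $\sqrt{\pi/2}$ comes from the Gaussian lower bound \eqref{eq:normal_frob_lowerbound}, and a factor $2\sqrt{2}$ comes from a scalar martingale inequality, giving $\sqrt{\pi/2}\cdot 2\sqrt2 = 2\sqrt\pi$.

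\textbf{Step 1 (Gaussian linearization).} Vectorize: identify $\mW_t$ with a vector $w_t\in\R^{mn}$ so that $\|\mW_t\|_F=\|w_t\|$. Draw $\vg\sim\gN_{mn}(\bm 0,\mI)$ independent of everything. Applying \eqref{eq:normal_frob_lowerbound} with $\mM$ the $mn\times1$ matrix $\sum_t w_t$ gives, conditionally on the $\mW$'s,
\[
\Big\|\sum_t \mW_t\Big\|_F\le \sqrt{\tfrac{\pi}{2}}\,\mathbb{E}_{\vg}\Big|\sum_t \langle \vg, w_t\rangle\Big|.
\]
Taking expectations and using Fubini, it suffices to bound $\mathbb{E}\,|\sum_t Y_t|$ with $Y_t:=\langle \vg,w_t\rangle$. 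Conditionally on $\vg$, $(Y_t)$ is a scalar martingale difference sequence with respect to $\sigma(\vg,\mF_t)$, since $\vg$ is independent of the $\mW$'s.

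\textbf{Step 2 (scalar martingale bound).} For a real martingale difference sequence, I would use the Burkholder–Davis–Gundy inequality at exponent $1$, $\mathbb{E}|\sum_t Y_t|\le c\,\mathbb{E}(\sum_t Y_t^2)^{1/2}$, aiming for $c=2\sqrt2$. This constant can be obtained, for instance, via Davis' decomposition or a symmetrization/decoupling argument, or one can quote a known sharp constant (Burkholder's constant at $p=1$ is bounded by a small absolute constant). Since $p\le 2$, we then have $(\sum_t Y_t^2)^{1/2}\le(\sum_t|Y_t|^p)^{1/p}$.

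\textbf{Step 3 (remove the Gaussian).} Next we need $\mathbb{E}_{\vg}(\sum_t |\langle \vg,w_t\rangle|^p)^{1/p}\lesssim(\sum_t\|w_t\|^p)^{1/p}$. For fixed $w_t$, Jensen with the concave map $x\mapsto x^{1/p}$ gives
\[
\mathbb{E}_{\vg}\Big(\sum_t|\langle \vg,w_t\rangle|^p\Big)^{1/p}\le\Big(\sum_t\|w_t\|^p\,\mathbb{E}|Z|^p\Big)^{1/p},
\]
with $Z\sim\gN(0,1)$ and $(\mathbb{E}|Z|^p)^{1/p}\le(\mathbb{E}Z^2)^{1/2}=1$. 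Here is the one subtlety: the $w_t$ are random and $\vg$ sits inside the martingale. Conditioning on $\mF_T$ (as $\vg$ is independent of $\mF_T$) makes this rigorous, because after Step 2 the outer expression is a function of $\vg$ and the $w_t$ only. Combining the three steps yields \eqref{eq:buckholder_extension} with constant $\sqrt{\pi/2}\cdot c$.

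\textbf{Main obstacle.} The hard part is getting the exact constant $2\sqrt{2}$ in the scalar $L^1$ BDG step with respect to the enlarged filtration. I would first try to cite a known bound such as Burkholder's $\mathbb{E}|\sum Y_t|\le 2\sqrt2\,\mathbb{E}(\sum Y_t^2)^{1/2}$ (valid for the martingale-transform/conditionally symmetric case). If that is unavailable in general, I would use symmetrization with an independent tangent sequence, which yields a factor of $2$ in front of a Rademacher sum, combined with Khintchine at $L^1$ with constant $\sqrt2$. That route also gives $2\sqrt2$, and I expect it to be the intended one.
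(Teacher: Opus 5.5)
Your architecture is the paper's. An independent Gaussian reduces the dimension, a martingale inequality with constant $2\sqrt{2}$ is applied to the projected sequence, and conditional Jensen with $\mathbb{E}|Z|^p\le 1$ removes the Gaussian, giving $\sqrt{\pi/2}\cdot 2\sqrt{2}=2\sqrt{\pi}$. The paper projects only to row vectors $\vg^\top\mW_t\in\R^{1\times n}$ with $\vg\in\R^m$, whereas you vectorize and project to scalars. Your Steps~1 and~3 are fine, including the conditioning on $\mathcal F_T$.

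The gap is Step~2, which carries all the nontrivial content. You need $\mathbb{E}|\sum_t Y_t|\le 2\sqrt{2}\,\mathbb{E}(\sum_t Y_t^2)^{1/2}$ for an \emph{arbitrary} real martingale difference sequence, and neither justification you give delivers it.
\begin{itemize}
\item A bound for conditionally symmetric sequences or martingale transforms does not apply. For fixed $\vg$, the conditional law of $Y_t=\langle \vg,w_t\rangle$ given the past is an arbitrary mean-zero law. The symmetry $\vg\mapsto-\vg$ flips all $Y_t$ at once, not individually.
\item The symmetrization route fails as described. The ``factor $2$ in front of a Rademacher sum'' rests on the identity in law $(X_i-X_i')_i\overset{d}{=}(\epsilon_i(X_i-X_i'))_i$, which requires independence across $i$. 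With a decoupled tangent sequence $e_t$ you do get $\mathbb{E}|\sum_t d_t|\le\mathbb{E}|\sum_t(d_t-e_t)|$ with conditionally symmetric increments. The joint law is still not sign-invariant, though, because later increments may depend on the signs of earlier ones. You would also have to compare the square function of $d-e$ with that of $d$.
\item The constant bookkeeping is off. $\sqrt{2}$ is the constant in the \emph{lower} $L^1$ Khintchine inequality; the upper bound you would use holds with constant $1$. So the factor $2\cdot\sqrt{2}$ does not come from where you claim.
\end{itemize}

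The paper closes exactly this step by citing \cite[Lemma~4.3]{liu2024nonconvex}. That is a dimension-free bound, $\mathbb{E}\|\sum_t v_t\|\le 2\sqrt{2}\,\mathbb{E}(\sum_t\|v_t\|^p)^{1/p}$, for Euclidean-vector martingale difference sequences with respect to their natural filtration.

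Its scalar case is precisely what you need. For fixed $\vg$, $(Y_t)$ is a martingale difference sequence for $\sigma(\mW_1,\dots,\mW_t)$, hence for its own natural filtration by the tower property. It also makes your $\ell_2\le\ell_p$ step unnecessary.

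Note also that, taking the lemma as the paper uses it, you could apply it directly to $\operatorname{vec}(\mW_t)$, whose Euclidean norm is $\|\mW_t\|_F$. That already yields the statement with the sharper constant $2\sqrt{2}$ and no Gaussian step. The randomization only matters if you want to prove a scalar $L^1$ square-function inequality yourself, and then that inequality, with an explicit constant, is what remains to be proved.
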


\begin{proof}
Following \cite{kornilov2023accelerated}, let $\vg \sim \gN_m(\bm{0},\mI)$ be independent of $\mW_1,\ldots,\mW_T$, and define $\vv_t \eqdef \vg^\top \mW_t$. Since $\sigma(\vv_1,\ldots,\vv_{t-1}) \subseteq \sigma(\mW_1,\ldots,\mW_{t-1},\vg)$, the tower property and the independence of $\vg$ give
\begin{align*}
\Exp{\vv_t \mid \vv_1,\ldots,\vv_{t-1}} = \Exp{ \Exp{\vg^\top \mW_t \mid \mW_1,\ldots,\mW_{t-1},\vg} \mid \vv_1,\ldots,\vv_{t-1}} = \bm{0}.
\end{align*}
Thus, by \cite[Lemma 4.3]{liu2024nonconvex},
\begin{equation*}
\Exp{ \left\| \sum_{t = 1}^T \vv_t \right\|} \leq 2 \sqrt{2}\Exp{ \left( \sum_{t = 1}^T \| \vv_t\|^p \right)^{\nicefrac{1}{p}}}.
\end{equation*}
On the other hand, using \eqref{eq:normal_frob_lowerbound} conditionally on $\sum_{t=1}^T \mW_t$,
\begin{align*}
\Exp{ \left\| \sum_{t = 1}^T \vv_t \right\|} = \Exp{ \left\| \vg^\top \left(\sum_{t = 1}^T \mW_t\right) \right\| } \geq \sqrt{\frac{2}{\pi}} \Exp{ \left\| \sum_{t = 1}^T \mW_t \right\|_F }.
\end{align*}
It remains to bound the right-hand side. Let $\mathcal H_T=\sigma(\mW_1,\ldots,\mW_T)$. Since $(z_1+\cdots+z_T)^{1/p}$ is concave on $\mathbb R_+^T$, Jensen's inequality conditional on $\mathcal H_T$ gives
\begin{equation*}
\Exp{ \left( \sum_{t = 1}^T \| \vv_t\|^p \right)^{\nicefrac{1}{p}}} \leq
\Exp{ \left( \sum_{t = 1}^T \Exp{ \left\| \vg^\top \mW_t \right\|^p \mid \mathcal H_T} \right)^{\nicefrac{1}{p}}} =
\Exp{ \left( \sum_{t = 1}^T \Exp{ \left\| \vg^\top \mW_t \right\|^p \mid \mW_t} \right)^{\nicefrac{1}{p}}}
\end{equation*}
where the last equality uses the independence of $\vg$ from $\mathcal H_T$. For each $t$, an SVD of $\mW_t$ and the concavity of $x \mapsto x^{p/2}$ for $p \leq 2$ imply
\begin{equation*}
	\Exp{ \left\| \vg^\top \mW_t \right\|^p \mid \mW_t }
	\leq
	\| \mW_t \|_F^p.
\end{equation*}
Therefore,
\begin{equation*}
\Exp{ \left( \sum_{t = 1}^T \| \vv_t\|^p \right)^{\nicefrac{1}{p}}} \leq \Exp{ \left( \sum_{t = 1}^T \| \mW_t \|_F^p \right)^{\nicefrac{1}{p}}}.
\end{equation*}
Combining the previous displays yields
\begin{equation*}
\Exp{ \left\| \sum_{t = 1}^T \mW_t \right\|_F } \leq 2 \sqrt{\pi}\Exp{ \left( \sum_{t = 1}^T \| \mW_t \|_F^p \right)^{\nicefrac{1}{p}}}.
\end{equation*}
This completes the proof.
\end{proof}

\begin{lemma}\label{lem:det_range_error_powered_sketch}
Let $\mM \in \R^{m \times n}$ have rank $r$ and compact singular value decomposition
\[
\mM
=
\begin{bmatrix} \mU_1 & \mU_2 \end{bmatrix}
\begin{bmatrix}
\bm\Sigma_1 & \bm0\\
\bm0 & \bm\Sigma_2
\end{bmatrix}
\begin{bmatrix} \mV_1 & \mV_2 \end{bmatrix}^{\top},
\]
where $\sigma_1 \ge \cdots \ge \sigma_r>0$ and
\[
\begin{aligned}
\bm\Sigma_1=\operatorname{diag}(\sigma_1,\dots,\sigma_s),\qquad
\bm\Sigma_2=\operatorname{diag}(\sigma_{s+1},\dots,\sigma_r),
\end{aligned}
\]
for some $1\leq s<r$. Define
\[
\varrho_s \eqdef \frac{\sigma_{s+1}}{\sigma_s}.
\]
Let $p\ge 0$ be an integer, set $\ell=s+p$, and let $\bm\Omega\in\R^{n\times \ell}$. Define
\[
\begin{aligned}
\bm\Omega_1 := \mV_1^\top \bm\Omega \in \R^{s\times \ell},\qquad
\bm\Omega_2 := \mV_2^\top \bm\Omega \in \R^{(r-s)\times \ell}.
\end{aligned}
\]
Assume that $\bm\Omega_1$ has full row rank. For an integer $h\geq 0$, form
\[
\begin{aligned}
\mY := (\mM\mM^\top)^h \mM \bm\Omega,\qquad
\mQ := \operatorname{orth}(\mY),\qquad
\mP_{\mQ} := \mQ\mQ^\top,
\end{aligned}
\]
where $\operatorname{orth}(\mY)$ denotes any matrix whose columns form an orthonormal basis for $\operatorname{range}(\mY)$. Then
\begin{equation}
\label{eq:det_range_error_powered_sketch}
\|(\mI-\mP_{\mQ})\mM\|_F^2
\le
\|\bm\Sigma_2\|_F^2
+
\varrho_s^{4h}
\|\bm\Sigma_2\bm\Omega_2\bm\Omega_1^\dagger\|_F^2 .
\end{equation}
Consequently,
\begin{equation}
\label{eq:det_projected_energy_powered_sketch}
\|\mP_{\mQ}\mM\|_F^2
\ge
\|\bm\Sigma_1\|_F^2
-
\varrho_s^{4h}
\|\bm\Sigma_2\bm\Omega_2\bm\Omega_1^\dagger\|_F^2 .
\end{equation}
Moreover, if $\bm\Omega\sim \mathcal{N}(0,1)^{n\times \ell}$ with $\ell=s+p$ and $p\geq 2$, then $\bm\Omega_1$ has full row rank almost surely and
\begin{equation}
\label{eq:expected_projected_energy_powered_sketch}
\mathbb{E}_{\bm\Omega}\|\mP_{\mQ}\mM\|_F^2
\ge
\left[
\sum_{j=1}^{s}\sigma_j^2
-
\frac{s}{p-1}\varrho_s^{4h}\sum_{j>s}\sigma_j^2
\right]_+ .
\end{equation}
\end{lemma}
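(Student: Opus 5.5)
This is the Halko--Martinsson--Tropp argument with power iterations, and I would follow their structure closely. First reduce to the compact coordinates. Write $\mY = \mU \bm\Sigma^{2h+1}\mV^\top\bm\Omega = \mU_1\bm\Sigma_1^{2h+1}\bm\Omega_1 + \mU_2\bm\Sigma_2^{2h+1}\bm\Omega_2$. Since $\bm\Omega_1$ has full row rank, $\bm\Omega_1\bm\Omega_1^\dagger = \mI_s$. Consider the test matrix
\[
\mZ := \mY\bm\Omega_1^\dagger\bm\Sigma_1^{-(2h+1)} = \mU_1 + \mU_2 \mF, \qquad \mF := \bm\Sigma_2^{2h+1}\bm\Omega_2\bm\Omega_1^\dagger\bm\Sigma_1^{-(2h+1)}.
\]
Because $\operatorname{range}(\mZ)\subseteq\operatorname{range}(\mY)$, we have $\|(\mI-\mP_{\mQ})\mM\|_F \le \|(\mI-\mP_{\mZ})\mM\|_F$. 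It therefore suffices to bound the residual for the projector onto $\operatorname{range}(\mZ)$.

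Next I bound that residual blockwise. Split $(\mI-\mP_{\mZ})\mM = (\mI-\mP_{\mZ})\mU_1\bm\Sigma_1\mV_1^\top + (\mI-\mP_{\mZ})\mU_2\bm\Sigma_2\mV_2^\top$. Since $\mV_1,\mV_2$ have mutually orthogonal ranges, the squared Frobenius norm is the sum of the squared block norms. The second block is at most $\|\bm\Sigma_2\|_F^2$ because $\mI-\mP_{\mZ}$ is a contraction. For the first block, use $\mU_1 = \mZ - \mU_2\mF$, so that $(\mI-\mP_{\mZ})\mU_1\bm\Sigma_1 = -(\mI-\mP_{\mZ})\mU_2\mF\bm\Sigma_1$. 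Its Frobenius norm is then at most
\[
\|\mF\bm\Sigma_1\|_F = \|\bm\Sigma_2^{2h}\cdot\bm\Sigma_2\bm\Omega_2\bm\Omega_1^\dagger\cdot\bm\Sigma_1^{-2h}\|_F \le \sigma_{s+1}^{2h}\sigma_s^{-2h}\|\bm\Sigma_2\bm\Omega_2\bm\Omega_1^\dagger\|_F.
\]
Here I use $\|\mA\mB\mC\|_F\le\|\mA\|_{\mathrm{op}}\|\mB\|_F\|\mC\|_{\mathrm{op}}$ and factor $\bm\Sigma_2^{2h+1} = \bm\Sigma_2^{2h}\bm\Sigma_2$. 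Squaring gives $\varrho_s^{4h}$ and hence \eqref{eq:det_range_error_powered_sketch}. This step is loose (the HMT sharper bound goes through $\mI-\mP_{\mZ}\preceq \mF^\top\mF$-type arguments), but the crude version already yields exactly the stated inequality. Then \eqref{eq:det_projected_energy_powered_sketch} follows from Pythagoras: $\|\mP_{\mQ}\mM\|_F^2 = \|\mM\|_F^2 - \|(\mI-\mP_{\mQ})\mM\|_F^2$ and $\|\mM\|_F^2 = \|\bm\Sigma_1\|_F^2 + \|\bm\Sigma_2\|_F^2$.

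For the Gaussian statement, I use that $[\mV_1\ \mV_2]$ has orthonormal columns, so by rotational invariance $\bm\Omega_1$ and $\bm\Omega_2$ are independent standard Gaussian matrices. An $s\times(s+p)$ Gaussian matrix has full row rank almost surely. Conditioning on $\bm\Omega_1$ and applying the standard identity $\mathbb{E}\|\mS\mG\mT\|_F^2 = \|\mS\|_F^2\|\mT\|_F^2$ for Gaussian $\mG$ gives
\[
\mathbb{E}\|\bm\Sigma_2\bm\Omega_2\bm\Omega_1^\dagger\|_F^2 = \|\bm\Sigma_2\|_F^2\,\mathbb{E}\|\bm\Omega_1^\dagger\|_F^2.
\]
The inverse-Wishart trace formula gives $\mathbb{E}\|\bm\Omega_1^\dagger\|_F^2 = s/(p-1)$ when $p\ge2$. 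Taking expectations in \eqref{eq:det_projected_energy_powered_sketch} and noting that $\|\mP_{\mQ}\mM\|_F^2\ge0$ (so the bound may be replaced by its positive part) yields \eqref{eq:expected_projected_energy_powered_sketch}.

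The main obstacle is the first-block bound: making sure the range-inclusion and substitution trick is valid, in particular that $\bm\Sigma_1$ is invertible because $s<r$. The Gaussian moment facts I will simply cite from \cite{halko2011finding} (their Propositions 10.1--10.2).
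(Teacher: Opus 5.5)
Your proposal is correct and is essentially the paper's proof: it uses the same range-inclusion device $\bm\Omega_1^\dagger\bm\Sigma_1^{-(2h+1)}$ producing $\mF$, reaches the same intermediate bound $\|\bm\Sigma_2\|_F^2+\|\mF\bm\Sigma_1\|_F^2$, obtains $\varrho_s^{4h}$ from the same operator-norm sandwich, and finishes with Pythagoras and the same rotational-invariance and inverse-Wishart computation. The only difference is cosmetic: the paper works in reduced coordinates $\mP_{\mQ}=\mU_r\mP_{\mG}\mU_r^\top$ and uses the least-squares characterization with the explicit approximant $[\mI_s;\mF][\bm\Sigma_1\ \bm0]$, whereas you stay in ambient coordinates and bound the two blocks via $(\mI-\mP_{\mZ})\mZ=\bm0$ and contractivity (and in the Frobenius norm this blockwise bound is not actually looser than HMT's).
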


\begin{proof}
Set
\begin{equation*}
\mU_r := \begin{bmatrix} \mU_1 & \mU_2 \end{bmatrix},\quad\quad \mV_r := \begin{bmatrix} \mV_1 & \mV_2 \end{bmatrix},\quad\quad
\bm\Sigma :=
\begin{bmatrix}
\bm\Sigma_1 & \bm0\\
\bm0 & \bm\Sigma_2
\end{bmatrix}.
\end{equation*}
Then $\mM=\mU_r\bm\Sigma\mV_r^\top$. Since
\[
(\mM\mM^\top)^h\mM
=
\mU_r \bm\Sigma^{2h+1}\mV_r^\top,
\]
we have
\[
\mY
=
\mU_r
\begin{bmatrix}
\bm\Sigma_1^{2h+1}\bm\Omega_1\\
\bm\Sigma_2^{2h+1}\bm\Omega_2
\end{bmatrix}.
\]
Define
\[
\mG :=
\begin{bmatrix}
\bm\Sigma_1^{2h+1}\bm\Omega_1\\
\bm\Sigma_2^{2h+1}\bm\Omega_2
\end{bmatrix}
\in \R^{r\times \ell}.
\]
Then $\mY=\mU_r\mG$. Let $\mP_{\mG}$ denote the orthogonal projector onto
$\operatorname{range}(\mG)$. Since $\mU_r$ has orthonormal columns, the orthogonal projector onto
$\operatorname{range}(\mY)=\mU_r\operatorname{range}(\mG)$ is
\[
\mP_{\mQ}=\mU_r\mP_{\mG}\mU_r^\top.
\]
Therefore,
\begin{multline}\label{eq:reduce_to_singular_coordinates}
\|(\mI-\mP_{\mQ})\mM\|_F =
\|(\mI-\mU_r\mP_{\mG}\mU_r^\top)\mU_r\bm\Sigma\mV_r^\top\|_F \\ =
\|\mU_r(\mI-\mP_{\mG})\bm\Sigma\mV_r^\top\|_F =
\|(\mI-\mP_{\mG})\bm\Sigma\|_F .
\end{multline}
Since $\bm\Omega_1$ has full row rank, $\bm\Omega_1\bm\Omega_1^\dagger=\mI_s$. Define
\[
\mF
:=
\bm\Sigma_2^{2h+1}
\bm\Omega_2
\bm\Omega_1^\dagger
\bm\Sigma_1^{-(2h+1)}
\in \R^{(r-s)\times s}.
\]
Then
\begin{equation}\label{eq:range_embedding}
\mG\bm\Omega_1^\dagger \bm\Sigma_1^{-(2h+1)} =
\begin{bmatrix}
\bm\Sigma_1^{2h+1}\bm\Omega_1\\
\bm\Sigma_2^{2h+1}\bm\Omega_2
\end{bmatrix}
\bm\Omega_1^\dagger \bm\Sigma_1^{-(2h+1)} =
\begin{bmatrix}
\mI_s\\
\mF
\end{bmatrix}.
\end{equation}
Thus
\[
\operatorname{range}
\begin{bmatrix}
\mI_s\\
\mF
\end{bmatrix}
\subseteq
\operatorname{range}(\mG).
\]
By the least-squares characterization of the orthogonal projector,
\[
\|(\mI-\mP_{\mG})\bm\Sigma\|_F
=
\min_{\mX\in\R^{\ell\times r}}\|\bm\Sigma-\mG\mX\|_F.
\]
Using \eqref{eq:range_embedding}, for any $\mC\in\R^{s\times r}$,
\[
\begin{bmatrix}
\mI_s\\
\mF
\end{bmatrix}\mC
=
\mG\bm\Omega_1^\dagger \bm\Sigma_1^{-(2h+1)}\mC
\]
lies in the class of admissible approximants $\mG\mX$. Hence
\[
\|(\mI-\mP_{\mG})\bm\Sigma\|_F
\le
\left\|
\bm\Sigma
-
\begin{bmatrix}
\mI_s\\
\mF
\end{bmatrix}\mC
\right\|_F
\]
for every $\mC\in\R^{s\times r}$. Taking
\[
\mC :=
\begin{bmatrix}
\bm\Sigma_1 & \bm0
\end{bmatrix},
\]
we obtain
\begin{multline}\label{eq:range_error_before_power_bound}
\|(\mI-\mP_{\mG})\bm\Sigma\|_F^2
\le
\left\|
\begin{bmatrix}
\bm\Sigma_1 & \bm0\\
\bm0 & \bm\Sigma_2
\end{bmatrix}
-
\begin{bmatrix}
\mI_s\\
\mF
\end{bmatrix}
\begin{bmatrix}
\bm\Sigma_1 & \bm0
\end{bmatrix}
\right\|_F^2 \\=
\left\|
\begin{bmatrix}
\bm0 & \bm0\\
-\mF\bm\Sigma_1 & \bm\Sigma_2
\end{bmatrix}
\right\|_F^2 =
\|\bm\Sigma_2\|_F^2+\|\mF\bm\Sigma_1\|_F^2 .
\end{multline}
It remains to bound $\|\mF\bm\Sigma_1\|_F$. By the definition of $\mF$,
\[
\mF\bm\Sigma_1
=
\bm\Sigma_2^{2h}
\left(
\bm\Sigma_2
\bm\Omega_2
\bm\Omega_1^\dagger
\right)
\bm\Sigma_1^{-2h}.
\]
Using $\|\mA\mX\mB\|_F\leq \|\mA\|_{\mathrm{op}}\|\mX\|_F\|\mB\|_{\mathrm{op}}$, we get
\begin{equation}\label{eq:F_bound}
\|\mF\bm\Sigma_1\|_F
\le
\|\bm\Sigma_2^{2h}\|_{\mathrm{op}}
\|\bm\Sigma_2\bm\Omega_2\bm\Omega_1^\dagger\|_F
\|\bm\Sigma_1^{-2h}\|_{\mathrm{op}} =
\varrho_s^{2h}
\|\bm\Sigma_2\bm\Omega_2\bm\Omega_1^\dagger\|_F .
\end{equation}
Squaring \eqref{eq:F_bound} and substituting into
\eqref{eq:range_error_before_power_bound} gives
\[
\|(\mI-\mP_{\mG})\bm\Sigma\|_F^2
\le
\|\bm\Sigma_2\|_F^2
+
\varrho_s^{4h}
\|\bm\Sigma_2\bm\Omega_2\bm\Omega_1^\dagger\|_F^2.
\]
Together with \eqref{eq:reduce_to_singular_coordinates}, this proves
\eqref{eq:det_range_error_powered_sketch}.

Next, since $\mP_{\mQ}$ is an orthogonal projector,
\[
\|\mM\|_F^2
=
\|\mP_{\mQ}\mM\|_F^2
+
\|(\mI-\mP_{\mQ})\mM\|_F^2 .
\]
Also,
\[
\|\mM\|_F^2
=
\|\bm\Sigma_1\|_F^2+\|\bm\Sigma_2\|_F^2.
\]
Combining these identities with \eqref{eq:det_range_error_powered_sketch} yields
\[
\|\mP_{\mQ}\mM\|_F^2
\ge
\|\bm\Sigma_1\|_F^2
-
\varrho_s^{4h}
\|\bm\Sigma_2\bm\Omega_2\bm\Omega_1^\dagger\|_F^2,
\]
which proves \eqref{eq:det_projected_energy_powered_sketch}.

It remains to prove the Gaussian expectation bound. Suppose
$\bm\Omega\sim \mathcal{N}(0,1)^{n\times \ell}$ with $\ell=s+p$ and $p\geq 2$.
By rotational invariance of the Gaussian distribution, after extending
$\begin{bmatrix}\mV_1 & \mV_2\end{bmatrix}$ to an orthogonal matrix, the blocks
$\bm\Omega_1=\mV_1^\top\bm\Omega$ and
$\bm\Omega_2=\mV_2^\top\bm\Omega$ are independent standard Gaussian matrices.
Moreover, $\bm\Omega_1\in\R^{s\times (s+p)}$ has full row rank almost surely.

Conditioning on $\bm\Omega_1$, we have
\begin{align}
\mathbb{E}_{\bm\Omega_2}
\left[
\|\bm\Sigma_2\bm\Omega_2\bm\Omega_1^\dagger\|_F^2
\mid
\bm\Omega_1
\right]
&=
\operatorname{tr}
\left(
(\bm\Omega_1^\dagger)^\top
\mathbb{E}_{\bm\Omega_2}
\left[
\bm\Omega_2^\top
\bm\Sigma_2^2
\bm\Omega_2
\right]
\bm\Omega_1^\dagger
\right).
\end{align}
Since $\bm\Omega_2$ has independent standard Gaussian entries,
\[
\mathbb{E}_{\bm\Omega_2}
\left[
\bm\Omega_2^\top
\bm\Sigma_2^2
\bm\Omega_2
\right]
=
\operatorname{tr}(\bm\Sigma_2^2)\mI_{\ell}
=
\|\bm\Sigma_2\|_F^2\mI_{\ell}.
\]
Therefore,
\[
\mathbb{E}_{\bm\Omega_2}
\left[
\|\bm\Sigma_2\bm\Omega_2\bm\Omega_1^\dagger\|_F^2
\mid
\bm\Omega_1
\right]
=
\|\bm\Sigma_2\|_F^2
\|\bm\Omega_1^\dagger\|_F^2.
\]
Taking expectation over $\bm\Omega_1$ gives
\[
\mathbb{E}_{\bm\Omega}
\|\bm\Sigma_2\bm\Omega_2\bm\Omega_1^\dagger\|_F^2
=
\|\bm\Sigma_2\|_F^2
\mathbb{E}_{\bm\Omega_1}\|\bm\Omega_1^\dagger\|_F^2.
\]
Since $\bm\Omega_1$ has full row rank almost surely,
\[
\bm\Omega_1^\dagger
=
\bm\Omega_1^\top(\bm\Omega_1\bm\Omega_1^\top)^{-1},
\]
and hence
\[
\|\bm\Omega_1^\dagger\|_F^2
=
\operatorname{tr}\left((\bm\Omega_1\bm\Omega_1^\top)^{-1}\right).
\]
The matrix $\bm\Omega_1\bm\Omega_1^\top$ has the Wishart distribution
$W_s(\ell,\mI_s)$. Since $\ell=s+p$ and $p\geq 2$, the inverse moment exists and
\[
\mathbb{E}
\left[
(\bm\Omega_1\bm\Omega_1^\top)^{-1}
\right]
=
\frac{1}{\ell-s-1}\mI_s
=
\frac{1}{p-1}\mI_s.
\]
Thus
\[
\mathbb{E}_{\bm\Omega_1}\|\bm\Omega_1^\dagger\|_F^2
=
\frac{s}{p-1},
\]
and therefore
\[
\mathbb{E}_{\bm\Omega}
\|\bm\Sigma_2\bm\Omega_2\bm\Omega_1^\dagger\|_F^2
=
\frac{s}{p-1}\|\bm\Sigma_2\|_F^2.
\]
Taking expectations in \eqref{eq:det_projected_energy_powered_sketch}, we get
\[
\mathbb{E}_{\bm\Omega}\|\mP_{\mQ}\mM\|_F^2
\ge
\|\bm\Sigma_1\|_F^2
-
\frac{s}{p-1}\varrho_s^{4h}\|\bm\Sigma_2\|_F^2.
\]
Finally, using
\[
\|\bm\Sigma_1\|_F^2=\sum_{j=1}^s\sigma_j^2,
\qquad
\|\bm\Sigma_2\|_F^2=\sum_{j>s}\sigma_j^2,
\]
and the trivial bound $\|\mP_{\mQ}\mM\|_F^2\geq 0$, we obtain
\[
\mathbb{E}_{\bm\Omega}\|\mP_{\mQ}\mM\|_F^2
\ge
\left[
\sum_{j=1}^{s}\sigma_j^2
-
\frac{s}{p-1}\varrho_s^{4h}\sum_{j>s}\sigma_j^2
\right]_+ .
\]
This proves \eqref{eq:expected_projected_energy_powered_sketch}.
\end{proof}

\begin{lemma}
\label{lemma:batch_variance}
Under Assumption \ref{assume:general_bounded_moment}, we have
\begin{equation}\label{eq:general_bounded_moment_Gk}
\textstyle
\Exp{\left\| \mG_k - \nabla f(\mX_k) \right\|_F^\alpha \mid \mathcal{F}_k}
\le
\frac{\gC_\alpha}{B^{\alpha - 1}}
\left(
\sigma_0^\alpha
+
\sigma_1^\alpha \| \nabla f(\mX_k)\|_F^\alpha
\right)
\end{equation}
for some $0 < \gC_\alpha \leq 2$ depending only on $\alpha$ and independent of $B$.
Consequently, the same bound holds with conditioning on $\mX_k$ in place of $\mathcal F_k$.
\end{lemma}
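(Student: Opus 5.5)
The plan is to write $\mG_k-\nabla f(\mX_k)=\frac{1}{B}\sum_{i=1}^B \mW_i$ with $\mW_i\eqdef \mG_k^i-\nabla f(\mX_k)$. We then treat $(\mW_i)_{i=1}^B$, conditionally on $\mathcal{F}_k$, as a martingale difference sequence. Indeed, the $\mW_i$ are conditionally independent and have conditional mean zero by Assumption~\ref{assume:general_bounded_moment}. Hence $\Exp{\mW_i\mid \mathcal{F}_k,\mW_1,\dots,\mW_{i-1}}=\bm 0$. The goal is a von Bahr--Esseen type bound:
\begin{equation*}
\Exp{\Big\|\sum_{i=1}^B \mW_i\Big\|_F^\alpha \,\Big|\, \mathcal{F}_k}\le \gC_\alpha \sum_{i=1}^B \Exp{\|\mW_i\|_F^\alpha\mid\mathcal{F}_k}.
\end{equation*}
Once this holds, dividing by $B^\alpha$ and inserting the per-sample moment bound \eqref{eq:general_bounded_moment} gives
\begin{equation*}
\frac{\gC_\alpha}{B^{\alpha}}\cdot B\,(\sigma_0^\alpha+\sigma_1^\alpha\|\nabla f(\mX_k)\|_F^\alpha),
\end{equation*}
which is exactly \eqref{eq:general_bounded_moment_Gk}.

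To prove the inequality with a constant at most $2$, we would use the Hilbert-space version of the von Bahr--Esseen inequality. The space $\R^{m\times n}$ with the Frobenius inner product is a Hilbert space. For $\alpha\in(1,2]$, the scalar function $\psi(x)=\|x\|^\alpha$ satisfies the one-step estimate
\begin{equation*}
\|x+y\|^\alpha\le \|x\|^\alpha+\alpha\|x\|^{\alpha-2}\langle x,y\rangle+2^{2-\alpha}\|y\|^\alpha.
\end{equation*}
This follows from the $(\alpha-1)$-Hölder continuity of the gradient $\nabla\psi(x)=\alpha\|x\|^{\alpha-2}x$, with constant $\alpha 2^{2-\alpha}$ after integration; one could also cite the inequality from \cite{liu2024nonconvex}. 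We apply the estimate with $x=\sum_{j<i}\mW_j$ and $y=\mW_i$, and take conditional expectations. The cross term vanishes because $\mW_i$ has zero mean given $\mW_1,\dots,\mW_{i-1}$ and $\mathcal{F}_k$, while $x$ is measurable with respect to that information. Telescoping over $i=1,\dots,B$ then gives the desired bound with $\gC_\alpha=2^{2-\alpha}\in[1,2)$. This constant depends only on $\alpha$ and not on $B$, and $2^{2-\alpha}\le 2$ because $\alpha>1$.

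The main obstacle is verifying the one-step inequality with an explicit constant of at most $2$. The delicate case is when $x$ is near zero, where $\|x\|^{\alpha-2}$ blows up. We would handle it by reducing to the two-dimensional plane spanned by $x$ and $y$, or by using the standard bound $\|\nabla\psi(a)-\nabla\psi(b)\|\le \alpha 2^{2-\alpha}\|a-b\|^{\alpha-1}$ and integrating along the segment from $x$ to $x+y$. That integration gives the coefficient $\alpha2^{2-\alpha}/\alpha=2^{2-\alpha}$.

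A fallback is also available. Lemma~\ref{lemma:bound_martingle}, with an $\alpha$-th moment version obtained via Gaussian projection plus the scalar von Bahr--Esseen inequality, yields a constant still depending only on $\alpha$. In that case we would simply redefine $\gC_\alpha$ and check that it is at most $2$.

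Finally, the version conditional on $\mX_k$ follows from the tower property. The right-hand side of \eqref{eq:general_bounded_moment_Gk} is a function of $\mX_k$ alone, and $\sigma(\mX_k)\subseteq\mathcal{F}_k$, so conditioning both sides on $\mX_k$ preserves the inequality.
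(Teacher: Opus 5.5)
Your proposal is correct and follows the same route as the paper. You write $\mG_k-\nabla f(\mX_k)$ as $B^{-1}$ times a sum of conditionally independent mean-zero matrices, apply the Hilbert-space (Frobenius) von Bahr--Esseen inequality conditionally on $\mathcal{F}_k$, insert Assumption~\ref{assume:general_bounded_moment}, and pass to conditioning on $\mX_k$ by the tower property. The only difference is that the paper simply cites von Bahr--Esseen with $\gC_\alpha\le 2$, whereas you derive it from the $(\alpha-1)$-H\"older smoothness of $\|\cdot\|_F^\alpha$. That derivation is valid, gives the explicit constant $\gC_\alpha=2^{2-\alpha}$, and needs only the martingale-difference property, so your fallback via Lemma~\ref{lemma:bound_martingle} is unnecessary.
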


\begin{proof}
From the definition $\mG_k \eqdef B^{-1}\sum_{i = 1}^B \mG_k^i$, conditional on $\mathcal F_k$ we have
\begin{align*}
\Exp{\left\| \mG_k - \nabla f(\mX_k) \right\|_F^\alpha \mid \mathcal F_k}
&=
\frac{1}{B^\alpha}
\Exp{\left\| \sum_{i = 1}^B
\left( \mG_k^i - \nabla f(\mX_k) \right)
\right\|_F^\alpha \mid \mathcal F_k} \\
&\leq
\frac{\gC_\alpha}{B^\alpha}
\sum_{i = 1}^B
\Exp{
\left\| \mG_k^i - \nabla f(\mX_k) \right\|_F^\alpha
\mid \mathcal F_k
},
\end{align*}
where the inequality follows from the von Bahr-Esseen inequality, applied conditionally on $\mathcal F_k$ to the independent mean-zero random matrices
$\mG_k^i-\nabla f(\mX_k)$, viewed as elements of the Hilbert space equipped with the Frobenius norm. For any $\alpha \in (1,2]$, one may take $\gC_\alpha \leq 2$. Using Assumption \ref{assume:general_bounded_moment}, we obtain
\begin{align*}
\Exp{\left\| \mG_k - \nabla f(\mX_k) \right\|_F^\alpha \mid \mathcal F_k}
&\leq
\frac{\gC_\alpha}{B^\alpha}
\sum_{i = 1}^B
\left(
\sigma_0^\alpha
+
\sigma_1^\alpha \| \nabla f(\mX_k)\|_F^\alpha
\right) \\
&=
\frac{\gC_\alpha}{B^{\alpha - 1}}
\left(
\sigma_0^\alpha
+
\sigma_1^\alpha \| \nabla f(\mX_k)\|_F^\alpha
\right).
\end{align*}
Since $\mX_k$ is $\mathcal F_k$-measurable, taking conditional expectation of the preceding bound with respect to $\mX_k$ yields the version conditioned on $\mX_k$.
\end{proof}

\section{Missing Proofs and Discussions of Section \ref{section:motivation}}

\subsection{Proof of Proposition \ref{prop:ns}}

\propns*

\begin{proof}
We prove the result for $\varphi (x) = \frac{3}{2} x - \frac{1}{2}x^3$. From the Newton-Schulz iterations, we have $\mZ_0 = \nicefrac{\mM}{\delta}$ and $\mZ_{t+1} = \varphi \left( \mZ_t \right)$ for $t = 0, 1, \cdots, q-1$. Define $s_i \eqdef \frac{\sigma_i}{\delta}$ for $i = 1, \cdots r$. Since $\delta \geq \|\mM\|_{\rm op}$, we have $s_i \in (0,1]$. First, we show for every $t\ge 0$,
\begin{equation*}
\mZ_t =  \mU \operatorname{diag}\bigl(p_t(s_1),\dots,p_t(s_r)\bigr) \mV^\top.
\end{equation*}
We prove this by induction on $t$. For $t=0$, this is immediate from the SVD of $\mM$:
\begin{equation*}
\mZ_0  =  \frac{\mM}{\delta}  =  \mU \operatorname{diag} \left(\frac{\sigma_1}{\delta}, \dots,\frac{\sigma_r}{\delta} \right) \mV^\top  =   \mU \operatorname{diag}(s_1,\dots,s_r) \mV^\top.        
\end{equation*}
Assume the representation holds at time $t$, i.e., $\mZ_t = \mU \mD_t \mV^\top$, where
\begin{equation*}
\mD_t:=\operatorname{diag}\bigl(p_t(s_1),\dots,p_t(s_r)\bigr).
\end{equation*}
Then $\mZ_t \mZ_t^\top \mZ_t =  (\mU \mD_t \mV^\top)(\mV \mD_t \mU^\top)(\mU \mD_t \mV^\top) =  \mU \mD_t^3 \mV^\top$. Therefore
\begin{equation*}
\mZ_{t+1} = \frac32 \mU \mD_t \mV^\top - \frac12 \mU \mD_t^3 \mV^\top =  \mU\left(\frac32 \mD_t - \frac12 \mD_t^3\right) \mV^\top.    
\end{equation*}
Since $\mD_t$ is diagonal,
\begin{equation*}
\frac32 \mD_t - \frac12 \mD_t^3 = \operatorname{diag}\bigl(\varphi(p_t(s_1)),\dots, \varphi(p_t(s_r))\bigr) = \operatorname{diag}\bigl(p_{t+1}(s_1),\dots,p_{t+1}(s_r)\bigr).    
\end{equation*}
The last line follows from the definition $p_{t+1} (x) = \varphi (p_t(x))$. Thus, the claim follows by induction.

By induction and Lemma \ref{lemma:NSpoly_prop}, we have $0\le p_q(x)\le 1$ and $p_q(x)\ge x$ for all $x\in[0,1]$. Hence, for $s_i=\sigma_i/\delta \in (0,1]$, we obtain $0<p_q(s_i)\le 1$. Therefore,
\begin{equation*}
    \|\gT (\mM)\|_{\mathrm{op}} = \left\| p_q \left( \nicefrac{\mM}{\delta} \right) \right\|_{\rm op} = \max_{1\le i\le r} p_q(s_i) \leq 1.
\end{equation*}
Thus, the second condition of Assumption~\ref{assume:general_inexact} holds with $\nu = 0$. For the first condition, we have
\begin{align*}
\langle \mM, \gT(\mM) \rangle & = \operatorname{tr}( \mM^\top \gT(\mM)) =  \operatorname{tr} \left(( \mV \bm\Sigma \mU^\top)(\mU \mD_q \mV^\top)\right)  =  \operatorname{tr}( \mV \bm\Sigma \mD_q \mV^\top) \\
& =  \operatorname{tr}(\bm\Sigma \mD_q) =  \sum_{i=1}^r \sigma_i p_q(s_i) =  \left(\frac{\sum_{i=1}^r \sigma_i p_q(s_i)}{\sum_{i=1}^r \sigma_i} \right)\| \mM \|_*.    
\end{align*}
Since $0<p_q(s_i)\le 1$, the stated value of $\gamma$ belongs to $[0,1)$. Because $\gT(\mM)$ is deterministic in this proposition, the same bounds hold for the conditional expectations in Assumption~\ref{assume:general_inexact}. This concludes the proof for the cubic polynomial. The proof for $\varphi(x) = \frac{1}{8}(15x-10x^3+3x^5)$ follows similarly using Lemma \ref{lemma:quintic_NSpoly_prop}.
\end{proof}

\subsection{Proof of Proposition \ref{prop:randomized_polar}}

\proprandomizedpolar*

\begin{proof}
Let $p_0(x)=x$ and $p_{t+1}(x)=\varphi(p_t(x))$. Fix a realization of
$\bm\Omega$ and write
$\mB=\mQ^\top\mM=\widehat \mU \widehat{\bm\Sigma}\widehat \mV^\top$,
where $\widehat r=\operatorname{rank}(\mB)$ and
$\widehat{\bm\Sigma}=\operatorname{diag}(\widehat\sigma_1,\ldots,\widehat\sigma_{\widehat r})$.
Since $\mQ$ has orthonormal columns, $\|\mB\|_{\mathrm{op}}\le \|\mM\|_{\mathrm{op}}$,
and hence $\delta\ge \|\mB\|_{\mathrm{op}}$. The scalar bounds in
Lemmas \ref{lemma:NSpoly_prop} and \ref{lemma:quintic_NSpoly_prop} imply
$0\le p_q(x)\le 1$ for all $x\in[0,1]$. Therefore,
\begin{align*}
\left\|\mZ_q\right\|_{\mathrm{op}}
&=
\left\|p_q\left(\nicefrac{\mB}{\delta}\right)\right\|_{\mathrm{op}}
=
\max_{1\le i\le \widehat r}
p_q\left(\nicefrac{\widehat\sigma_i}{\delta}\right)
\le 1.
\end{align*}
Since $\gT_{h,q}(\mM;\bm\Omega)=\mQ\mZ_q$ and $\mQ^\top\mQ=\mI$, this gives
$\|\gT_{h,q}(\mM;\bm\Omega)\|_{\mathrm{op}}\le 1$.

For the alignment term, we have
\begin{align*}
\langle \mM,\gT_{h,q}(\mM;\bm\Omega)\rangle
&=
\langle \mM,\mQ\mZ_q\rangle
=
\langle \mQ^\top\mM,\mZ_q\rangle
=
\langle \mB,\mZ_q\rangle \\
&=
\sum_{i=1}^{\widehat r}
\widehat\sigma_i
p_q\left(\nicefrac{\widehat\sigma_i}{\delta}\right)
\ge
\sum_{i=1}^{\widehat r}
\nicefrac{\widehat\sigma_i^2}{\delta}
=
\nicefrac{\|\mB\|_F^2}{\delta},
\end{align*}
where the inequality uses $p_q(x)\ge x$ on $[0,1]$. Let
$\mP_{\mQ}\eqdef\mQ\mQ^\top$. Since $\mQ^\top\mQ=\mI$,
\[
\|\mB\|_F^2
=
\|\mQ^\top\mM\|_F^2
=
\|\mP_{\mQ}\mM\|_F^2.
\]
Thus,
\[
\langle \mM,\gT_{h,q}(\mM;\bm\Omega)\rangle
\ge
\frac{\|\mP_{\mQ}\mM\|_F^2}{\delta}.
\]
Taking expectation with respect to $\bm\Omega$ and applying
Lemma \ref{lem:det_range_error_powered_sketch} gives
\[
\mathbb{E}_{\bm\Omega}
\left[\langle \mM,\gT_{h,q}(\mM;\bm\Omega)\rangle\right]
\ge
\frac{1}{\delta}
\left[
\sum_{j=1}^s \sigma_j^2
-
\frac{s}{p-1}\varrho_s^{4h}
\sum_{j>s}\sigma_j^2
\right]_+ .
\]
This completes the proof.
\end{proof}

\subsection{Discussion on Proposition \ref{prop:randomized_polar}}
\label{appendix:discuss_prop}

Define
\[
A_{s,h}
\eqdef
\sum_{j=1}^s \sigma_j^2
-
\frac{s}{p-1}\varrho_s^{4h}\sum_{j>s}\sigma_j^2,
\qquad
\theta_{s,h}
\eqdef
\frac{[A_{s,h}]_+}{\delta\|\mM\|_*}.
\]
Then Proposition \ref{prop:randomized_polar} gives
\[
\mathbb{E}_{\bm\Omega}
\left[
\left\langle \mM,\gT_{h,q}(\mM;\bm\Omega)\right\rangle
\right]
\ge
\theta_{s,h}\|\mM\|_*,
\qquad
\left\|\gT_{h,q}(\mM;\bm\Omega)\right\|_{\mathrm{op}}\le 1.
\]
Thus, whenever $\theta_{s,h}>0$, the first condition of Assumption
\ref{assume:general_inexact} holds with
\[
\gamma=1-\theta_{s,h}
=
1-\frac{A_{s,h}}{\delta\|\mM\|_*},
\]
where the second equality uses $A_{s,h}>0$. The second condition holds with
$\nu=0$. It remains to verify that $\theta_{s,h}\le 1$ and to discuss when
$\theta_{s,h}>0$. For applying Theorem \ref{theorem:inexact}, one may use any
deterministic upper bound on the resulting values of $\gamma$ over the relevant
iterations.

\paragraph{Part I.}
Using $\delta \geq \|\mM\|_{\mathrm{op}}=\sigma_1$ from Algorithm
\ref{alg:rand_ns_update}, we have
\[
\delta\|\mM\|_*
=
\delta\sum_{j=1}^r\sigma_j
\geq
\sigma_1\sum_{j=1}^r\sigma_j
\geq
\sum_{j=1}^r\sigma_j^2
\geq
\sum_{j=1}^s\sigma_j^2
\geq
[A_{s,h}]_+.
\]
This proves $\theta_{s,h}\le 1$.

\paragraph{Part II.}
We now characterize when the quantity $A_{s,h}$ is strictly positive. For a
fixed target rank $s$, define the head and tail sums of squared singular values as
\[
H_s \eqdef \sum_{j=1}^s\sigma_j^2,
\qquad
T_s \eqdef \sum_{j>s}\sigma_j^2.
\]
Then
\[
[A_{s,h}]_+
=
\left[
H_s-\frac{s}{p-1}\varrho_s^{4h}T_s
\right]_+.
\]
Since $s<r$, we have $T_s>0$. Hence $A_{s,h}>0$ if and only if
\[
H_s>\frac{s}{p-1}\varrho_s^{4h}T_s,
\]
or equivalently,
\begin{equation}\label{eq:strict_positive_condition_equiv}
\varrho_s^{4h}
<
\frac{(p-1)H_s}{sT_s}.
\end{equation}
This condition shows two mechanisms that make the bound positive. The
oversampling parameter $p$ reduces the penalty through the factor $(p-1)^{-1}$,
while power iteration reduces it through the factor $\varrho_s^{4h}$, which is
small whenever there is a spectral gap at index $s$, namely
\[
\varrho_s=\frac{\sigma_{s+1}}{\sigma_s}<1.
\]

Since $h$ must be an integer, one may take
\begin{equation}
\label{eq:h_integer_choice}
h
=
\begin{cases}
0,
&
\text{if } H_s>\dfrac{s}{p-1}T_s,
\\[1.2em]
1+\left\lfloor
\dfrac{
\log\left( \dfrac{sT_s}{(p-1)H_s} \right)
}{
4\log(1/\varrho_s)
}
\right\rfloor,
&
\text{if } H_s\leq \dfrac{s}{p-1}T_s
\text{ and } \varrho_s<1.
\end{cases}
\end{equation}
With this choice, $A_{s,h}=H_s-\frac{s}{p-1}\varrho_s^{4h}T_s$ is strictly
positive. Finally, if $\varrho_s=1$, then power iteration does not improve the
positivity condition at that value of $s$, since $\varrho_s^{4h}=1$ for all
$h$. In this case, one must either choose a different target rank $s$ at which a
spectral gap is present, increase the oversampling parameter $p$, or rely on the
no-power condition $H_s>\frac{s}{p-1}T_s$.

\section{Auxiliary Lemmas}

\subsection{Lemmas for Analysis of Exact Polar Decomposition}

Throughout this subsection, let $d_0 \eqdef \min\{m,n\}$. Let $\gP(\mM)$ denote an exact polar factor satisfying $\langle \mM,\gP(\mM)\rangle=\|\mM\|_*$ and $\|\gP(\mM)\|_{\mathrm{op}}\le 1$.

\begin{lemma}\label{lemma:exact_descent}
Define
\begin{equation*}
\mS_k  \eqdef  \widetilde{\mM}_k-\nabla f(\mX_k).
\end{equation*}
Then, under Assumption \ref{assume:lipschitz}, \algname{Muon} with exact polar decomposition satisfies
\begin{equation}\label{eq:basic_descent_sum}
\eta\sum_{k=0}^{K-1}\Exp{\|\nabla f(\mX_k)\|_F}
\le 
f(\mX_0)-f_\star
+ \frac{Ld_0}{2}K\eta^2
+ \eta \left( 1+\sqrt{d_0} \right)
\sum_{k=0}^{K-1}\Exp{\left\| \mS_k \right\|_F}.	
\end{equation}	
\end{lemma}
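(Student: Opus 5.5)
We will prove the descent inequality directly from $L$-smoothness (Assumption~\ref{assume:lipschitz}) applied to the update $\mX_{k+1}=\mX_k-\eta\,\gP(\mM_k)$, using $\gP(\mM_k)=\gP(\widetilde{\mM}_k)$. With $\mY=\mX_{k+1}$ and $\mX=\mX_k$, smoothness gives
\begin{equation*}
f(\mX_{k+1})\le f(\mX_k)-\eta\langle \nabla f(\mX_k),\gP(\widetilde{\mM}_k)\rangle+\frac{L\eta^2}{2}\|\gP(\widetilde{\mM}_k)\|_F^2 .
\end{equation*}
The quadratic term is handled by $\|\gP(\widetilde{\mM}_k)\|_F^2\le d_0\|\gP(\widetilde{\mM}_k)\|_{\mathrm{op}}^2\le d_0$, which yields the $\frac{Ld_0}{2}\eta^2$ term per iteration.

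For the linear term, we substitute $\nabla f(\mX_k)=\widetilde{\mM}_k-\mS_k$. This gives
\begin{equation*}
-\langle \nabla f(\mX_k),\gP(\widetilde{\mM}_k)\rangle=-\|\widetilde{\mM}_k\|_*+\langle \mS_k,\gP(\widetilde{\mM}_k)\rangle\le -\|\widetilde{\mM}_k\|_*+\|\mS_k\|_*\|\gP(\widetilde{\mM}_k)\|_{\mathrm{op}} .
\end{equation*}
The last step uses the nuclear/operator duality. We then bound $\|\mS_k\|_*\le\sqrt{d_0}\|\mS_k\|_F$, since $\operatorname{rank}\le d_0$. Next, the triangle inequality for the nuclear norm together with $\|\cdot\|_*\ge\|\cdot\|_F$ gives
\begin{equation*}
\|\widetilde{\mM}_k\|_*\ge \|\widetilde{\mM}_k\|_F\ge \|\nabla f(\mX_k)\|_F-\|\mS_k\|_F .
\end{equation*}
Combining these bounds, the linear term is at most $-\eta\|\nabla f(\mX_k)\|_F+\eta(1+\sqrt{d_0})\|\mS_k\|_F$. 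This is exactly where the constant $1+\sqrt{d_0}$ comes from.

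Finally, we rearrange to isolate $\eta\|\nabla f(\mX_k)\|_F$ and take total expectations. We then sum over $k=0,\dots,K-1$ and telescope $f(\mX_0)-f(\mX_K)\le f(\mX_0)-f_\star$, which gives \eqref{eq:basic_descent_sum}. The main point needing care is the first step, namely that the polar factor is invariant under positive rescaling, so that the alignment $\langle\widetilde{\mM}_k,\gP\rangle=\|\widetilde{\mM}_k\|_*$ holds with the rescaled matrix. We also need to check the degenerate case $\widetilde{\mM}_k=\bm 0$, where $\gP$ is any matrix with operator norm at most $1$. In that case the alignment identity still holds trivially, since both sides vanish. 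We do not expect any other step to pose a real obstacle.
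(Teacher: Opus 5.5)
Your proof is correct and follows the paper's argument essentially step for step. Both use smoothness with $\|\gP(\widetilde{\mM}_k)\|_F^2\le d_0$, the split $\nabla f(\mX_k)=\widetilde{\mM}_k-\mS_k$ with the alignment identity $\langle\widetilde{\mM}_k,\gP(\widetilde{\mM}_k)\rangle=\|\widetilde{\mM}_k\|_*$, the chain $\|\widetilde{\mM}_k\|_*\ge\|\widetilde{\mM}_k\|_F\ge\|\nabla f(\mX_k)\|_F-\|\mS_k\|_F$, and then total expectation and telescoping. The only cosmetic difference is that you bound $\langle\mS_k,\gP(\widetilde{\mM}_k)\rangle$ via nuclear/operator duality and $\|\mS_k\|_*\le\sqrt{d_0}\|\mS_k\|_F$, whereas the paper effectively uses Cauchy--Schwarz with $\|\gP(\widetilde{\mM}_k)\|_F\le\sqrt{d_0}$; both give the same $\sqrt{d_0}$ factor.
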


\begin{proof}
Since exact polar factors are invariant under multiplication by a positive scalar,
the update can be written as
$\mX_{k+1}=\mX_k-\eta \gP(\widetilde{\mM}_k)$.
Moreover, $\|\gP(\widetilde{\mM}_k)\|_F\le \sqrt{d_0}$, and hence
\[
\left\| \mX_{k+1}-\mX_k \right\|_F^2
\le \eta^2 d_0.
\]
By $L$-smoothness,	
\begin{align*}
f(\mX_{k+1}) & \le  f(\mX_k) - \eta \left\langle \nabla f(\mX_k), \gP(\widetilde{\mM}_k) \right\rangle
+ \frac{Ld_0}{2}\eta^2 \\
& = 
f(\mX_k)
- \eta \left\langle \widetilde{\mM}_k, \gP(\widetilde{\mM}_k) \right\rangle
+ \eta \left\langle \mS_k, \gP(\widetilde{\mM}_k) \right\rangle
+ \frac{Ld_0}{2}\eta^2 \\
& = 
f(\mX_k)
- \eta \left\| \widetilde{\mM}_k \right\|_*
+ \eta \left\langle \mS_k, \gP(\widetilde{\mM}_k) \right\rangle
+ \frac{Ld_0}{2}\eta^2 \\
& \leq 
f(\mX_k)
- \eta \left\| \widetilde{\mM}_k \right\|_F
+ \eta \sqrt{d_0}\left\| \mS_k \right\|_F
+ \frac{Ld_0}{2}\eta^2.		
\end{align*}
Since $\| \widetilde{\mM}_k \|_F =\left\| \nabla f(\mX_k)+\mS_k \right\|_F
\ge \|\nabla f(\mX_k)\|_F-\left\| \mS_k \right\|_F$, we obtain
\[
f(\mX_{k+1})
\leq
f(\mX_k)
- \eta \left\|\nabla f(\mX_k) \right\|_F
+ \eta(1+\sqrt{d_0})\left\| \mS_k \right\|_F
+ \frac{Ld_0}{2}\eta^2.
\]
Taking total expectation and summing from $k=0$ to $K-1$ yields the claim.	
\end{proof}

\begin{lemma}\label{lemma:eps_unroll}
Define
\begin{equation*}
\mD_k \eqdef \nabla f(\mX_{k-1})-\nabla f(\mX_k) \quad\quad \text{and}\quad\quad \bm\xi_k \eqdef \mG_k-\nabla f(\mX_k).
\end{equation*}
Then, for every $k\ge 1$,
\begin{equation}\label{eq:eps_unroll}
\mS_k  = \beta^k \mS_0
+ \beta^2\sum_{s=1}^k \beta^{k-s}\mD_s
+ (1-\beta)\Big((1+\beta)\bm\xi_k
+ \sum_{s=1}^{k-1}\beta^{k+1-s}\bm\xi_s
- \beta^k\bm\xi_0\Big).
\end{equation}	
\end{lemma}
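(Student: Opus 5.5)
The plan is to first turn the recursion for $\widetilde{\mM}_k$ from Lemma~\ref{lemma:scaled_recurrence} into a one-step recursion for $\mS_k$, and then unroll it by induction, collecting the coefficients of the noise terms.

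\textbf{One-step recursion.} For $k\ge 1$, start from
\[
\widetilde{\mM}_k=\beta\widetilde{\mM}_{k-1}+(1-\beta)\big((1+\beta)\mG_k-\beta\mG_{k-1}\big).
\]
Substitute $\mG_j=\nabla f(\mX_j)+\bm\xi_j$ for $j=k,k-1$, and $\widetilde{\mM}_{k-1}=\mS_{k-1}+\nabla f(\mX_{k-1})$. Subtracting $\nabla f(\mX_k)$ from both sides then gives the following.
\begin{itemize}
\item The coefficient of $\nabla f(\mX_k)$ is $(1-\beta)(1+\beta)-1=-\beta^2$.
\item The coefficient of $\nabla f(\mX_{k-1})$ is $\beta-(1-\beta)\beta=\beta^2$.
\end{itemize}
Together these form $\beta^2\mD_k$, so
\[
\mS_k=\beta\mS_{k-1}+\beta^2\mD_k+(1-\beta)\big((1+\beta)\bm\xi_k-\beta\bm\xi_{k-1}\big).
\]

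\textbf{Unrolling.} Iterating this relation down to $\mS_0$ (a straightforward induction on $k$) yields
\[
\mS_k=\beta^k\mS_0+\beta^2\sum_{s=1}^k\beta^{k-s}\mD_s+(1-\beta)\sum_{s=1}^k\beta^{k-s}\big((1+\beta)\bm\xi_s-\beta\bm\xi_{s-1}\big).
\]

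\textbf{Reindexing the noise sum.} Collect the coefficient of each $\bm\xi_j$ in the last sum.
\begin{itemize}
\item $\bm\xi_k$ appears only from $s=k$, with coefficient $(1+\beta)$.
\item For $1\le j\le k-1$, $\bm\xi_j$ appears from $s=j$ with coefficient $\beta^{k-j}(1+\beta)$ and from $s=j+1$ with coefficient $-\beta^{k-j-1}\beta$. These sum to $\beta^{k+1-j}$.
\item $\bm\xi_0$ appears only from $s=1$, with coefficient $-\beta^{k-1}\beta=-\beta^k$.
\end{itemize}
This reproduces \eqref{eq:eps_unroll} exactly. Note that the sum over $1\le j\le k-1$ is empty when $k=1$, so that case is covered too.

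\textbf{Main obstacle.} The only delicate point is bookkeeping, not analysis. One must check the base case $k=1$, which requires that the recurrence of Lemma~\ref{lemma:scaled_recurrence} holds with $\mG_0$ and $\widetilde{\mM}_0$ defined consistently. One must also avoid an off-by-one error when telescoping the $\bm\xi_{s-1}$ terms. I would verify the case $k=1$ directly, $\mS_1=\beta\mS_0+\beta^2\mD_1+(1-\beta)((1+\beta)\bm\xi_1-\beta\bm\xi_0)$, and then run the induction.
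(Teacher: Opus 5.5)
Your proposal is correct and takes the same approach as the paper: derive the one-step recursion $\mS_k=\beta\mS_{k-1}+\beta^2\mD_k+(1-\beta)\big((1+\beta)\bm\xi_k-\beta\bm\xi_{k-1}\big)$ from Lemma~\ref{lemma:scaled_recurrence}, then unroll it. The paper stops at ``unrolling this recursion completes the proof''; your coefficient collection (giving $(1-\beta)\beta^{k+1-j}$ for $1\le j\le k-1$ and $-(1-\beta)\beta^k$ for $\bm\xi_0$) supplies the bookkeeping it omits.
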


\begin{proof}
Note that
\begin{align*}
\mS_k & = \widetilde{\mM}_k-\nabla f(\mX_k) \\
& =  \beta\widetilde{\mM}_{k-1}
+ (1-\beta)\big((1+\beta)\mG_k-\beta\mG_{k-1}\big)
- \nabla f(\mX_k) \\
& =  \beta\left(\mS_{k-1}+\nabla f(\mX_{k-1})\right)
+ (1-\beta)\big((1+\beta)\mG_k-\beta\mG_{k-1}\big)
- \nabla f(\mX_k).	
\end{align*}
Using $\mG_k=\nabla f(\mX_k)+\bm\xi_k$ gives
\begin{equation*}
\mS_k = 
\beta \mS_{k-1}
+ (1-\beta)\big((1+\beta)\bm\xi_k-\beta\bm\xi_{k-1}\big)
+ \beta^2\left(\nabla f(\mX_{k-1})-\nabla f(\mX_k)\right).	
\end{equation*}
Thus,
\[
\mS_k
=
\beta \mS_{k-1}
+ \beta^2 \mD_k
+ (1-\beta)\big((1+\beta)\bm\xi_k-\beta\bm\xi_{k-1}\big).
\]
Unrolling this recursion completes the proof.    
\end{proof}

\begin{lemma}\label{lemma:bound_S_k}
Under Assumptions \ref{assume:lipschitz} and \ref{assume:general_bounded_moment}, for every $0\le k\le K-1$,
\begin{align}\label{eq:bound_S_k}
\Exp{\| \mS_k \|_F}
& \leq\beta^k \Exp{\left\| \mS_0 \right\|_F}
+ \frac{\beta^2}{1-\beta}L\eta\sqrt {d_0} \notag\\
&\quad + \frac{2 \sqrt{\pi} \gC_\alpha^{\nicefrac{1}{\alpha}}}{B^{\frac{\alpha-1}{\alpha}}} \Bigg[
\Gamma_\alpha \sigma_0 (1-\beta)^{\frac{\alpha-1}{\alpha}}
+ \sigma_1 (1-\beta)\bigg((1+\beta)\Exp{\|\nabla f(\mX_k)\|_F} \notag\\
&\quad +\sum_{s=1}^{k-1}\beta^{k+1-s}\Exp{\|\nabla f(\mX_s)\|_F}
+ \beta^k\|\nabla f(\mX_0)\|_F\bigg)
\Bigg],
\end{align}
where $\Gamma_\alpha \eqdef (2^\alpha+1)^{\nicefrac{1}{\alpha}}$.	
\end{lemma}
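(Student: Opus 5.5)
We start from the unrolled representation of $\mS_k$ in Lemma~\ref{lemma:eps_unroll}, take Frobenius norms, apply the triangle inequality to split into three parts, and then take expectations:
\[
\Exp{\|\mS_k\|_F}\le \beta^k\Exp{\|\mS_0\|_F}+\beta^2\sum_{s=1}^k\beta^{k-s}\Exp{\|\mD_s\|_F}+(1-\beta)\,\Exp{\Big\|(1+\beta)\bm\xi_k+\sum_{s=1}^{k-1}\beta^{k+1-s}\bm\xi_s-\beta^k\bm\xi_0\Big\|_F}.
\]
The first term already appears in \eqref{eq:bound_S_k}.

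\textbf{Drift term.} By Assumption~\ref{assume:lipschitz}, $\|\mD_s\|_F\le L\|\mX_{s-1}-\mX_s\|_F$. For the exact polar step, $\|\mX_s-\mX_{s-1}\|_F=\eta\|\gP(\mM_{s-1})\|_F\le\eta\sqrt{d_0}$, since the polar factor has at most $d_0$ unit singular values. Summing the geometric series $\sum_{s=1}^k\beta^{k-s}\le 1/(1-\beta)$ gives the term $\frac{\beta^2}{1-\beta}L\eta\sqrt{d_0}$.

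\textbf{Noise term.} The sequence $\bm\xi_0,\dots,\bm\xi_k$ is a martingale difference sequence with respect to $(\mathcal F_s)$. Indeed, $\mX_s$ is $\mathcal F_s$-measurable and $\Exp{\bm\xi_s\mid\mathcal F_s}=\bm 0$. The weights $w_k=1+\beta$, $w_s=\beta^{k+1-s}$ and $w_0=-\beta^k$ are deterministic, so $\mW_s=w_s\bm\xi_s$ is also a martingale difference sequence. (The filtration in Lemma~\ref{lemma:bound_martingle} is the natural one, but the proof only uses the tower property, so $\mathcal F_s$ works as well.)

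Apply Lemma~\ref{lemma:bound_martingle} with $p=\alpha$, then Jensen's inequality \eqref{eq:jensen_concave} for the concave map $x\mapsto x^{1/\alpha}$, and then condition inside on $\mathcal F_s$ using Lemma~\ref{lemma:batch_variance}. This gives
\[
\Exp{\Big\|\sum_s\mW_s\Big\|_F}\le 2\sqrt\pi\Big(\sum_s|w_s|^\alpha\tfrac{\gC_\alpha}{B^{\alpha-1}}\big(\sigma_0^\alpha+\sigma_1^\alpha\Exp{\|\nabla f(\mX_s)\|_F^\alpha}\big)\Big)^{1/\alpha}.
\]
Using subadditivity $(x+y)^{1/\alpha}\le x^{1/\alpha}+y^{1/\alpha}$, split this into a $\sigma_0$ part and a $\sigma_1$ part.

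For the $\sigma_0$ part, $\sum_s|w_s|^\alpha\le(1+\beta)^\alpha+\sum_{j\ge1}\beta^{j\alpha}\le 2^\alpha+\frac{1}{1-\beta^\alpha}\le 2^\alpha+\frac{1}{1-\beta}$. Multiplying by $(1-\beta)$ yields at most $(1-\beta)^{(\alpha-1)/\alpha}\big((2^\alpha+1)\big)^{1/\alpha}$, using $(1-\beta)^\alpha 2^\alpha\le(1-\beta)2^\alpha$. This produces $\Gamma_\alpha\sigma_0(1-\beta)^{(\alpha-1)/\alpha}$.

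For the $\sigma_1$ part, apply subadditivity again term by term to get $\sum_s|w_s|\,(\Exp{\|\nabla f(\mX_s)\|_F^\alpha})^{1/\alpha}$.

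\textbf{Main obstacle.} The target bound involves $\Exp{\|\nabla f(\mX_s)\|_F}$, not the $\alpha$-th moment raised to $1/\alpha$, and Jensen's inequality points the wrong way here. My first attempt will avoid moving the expectation outside. Instead, keep the conditional structure: apply the $1/\alpha$-subadditivity pathwise, splitting $\big(\sum_s\|\mW_s\|_F^\alpha\big)^{1/\alpha}\le\sum_s|w_s|\,\|\bm\xi_s\|_F$ only for the $\sigma_1$-contributions, and then use $\Exp{\|\bm\xi_s\|_F\mid\mathcal F_s}\le(\Exp{\|\bm\xi_s\|_F^\alpha\mid\mathcal F_s})^{1/\alpha}\le \gC_\alpha^{1/\alpha}B^{-(\alpha-1)/\alpha}(\sigma_0+\sigma_1\|\nabla f(\mX_s)\|_F)$. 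Here conditional Jensen is applied before the outer expectation, and $\|\nabla f(\mX_s)\|_F$ is $\mathcal F_s$-measurable, so the outer expectation gives exactly $\Exp{\|\nabla f(\mX_s)\|_F}$.

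Concretely, bound $\|\bm\xi_s\|_F$'s $\alpha$-moment conditionally by $a_s^\alpha+b_s^\alpha$, where $a_s$ carries the $\sigma_0$ part and $b_s=\sigma_1\|\nabla f(\mX_s)\|_F$ scaled. Carry this through Lemma~\ref{lemma:bound_martingle} by conditioning inside its right-hand side. Since $x\mapsto x^{1/\alpha}$ is concave and the $\mathcal F_s$-conditioning is nested, it is cleanest to bound $\Exp{(\sum_s|w_s|^\alpha\|\bm\xi_s\|^\alpha)^{1/\alpha}}$ by the same pathwise and conditional argument. Collecting the coefficients $(1+\beta)$, $\beta^{k+1-s}$ and $\beta^k$, each multiplied by $(1-\beta)$, gives the stated $\sigma_1$ bracket. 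Since $\bm\xi_0$ is conditioned on the deterministic $\mX_0$, its term carries $\|\nabla f(\mX_0)\|_F$ without an expectation.
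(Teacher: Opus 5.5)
\textbf{Gap: the step that puts conditional moments inside the $1/\alpha$-power is not justified.}

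Your drift bound and your $\sigma_0$ computation are correct. The route you settle on is the paper's route: put the conditional moments $\Exp{\|\bm\xi_s\|_F^\alpha\mid\mathcal F_s}$ inside the $1/\alpha$-power on the right of Lemma~\ref{lemma:bound_martingle}, then split by subadditivity. The paper simply invokes a ``predictable conditional version'' of that lemma without proof. The gap is your justification of this step.

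With $Y_s=|w_s|^\alpha\|\bm\xi_s\|_F^\alpha$, the inequality $\Exp{(\sum_s Y_s)^{1/\alpha}}\le\Exp{(\sum_s\Exp{Y_s\mid\mathcal F_s})^{1/\alpha}}$ does not follow from concavity and nested conditioning.
\begin{itemize}
\item Conditional Jensen can replace only the last summand. For $t>s$, the compensator $\Exp{Y_t\mid\mathcal F_t}$ depends on $\bm\xi_s$ through $\mX_t$, so it is not fixed given $\mathcal F_s$.
\item Iterating the tower property therefore collapses everything to unconditional moments. That is exactly the $(\Exp{\|\nabla f(\mX_s)\|_F^\alpha})^{1/\alpha}$ bound you rejected.
\item With constant $1$ the inequality is in fact false. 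Let $Y_0\in\{0,2\}$ with equal probability and let $Y_1=2-Y_0$ be $\mathcal F_1$-measurable. The left side is $2^{1/\alpha}$ and the right side is $\tfrac12(3^{1/\alpha}+1)<2^{1/\alpha}$.
\end{itemize}
A predictable version can be obtained from Lenglart's domination inequality. That route costs an extra $\alpha$-dependent factor, such as $(2\alpha-1)/(\alpha-1)$, which would change $\rho$ and the batch-size threshold.

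Your other suggestion, the pathwise bound $(\sum_s\|\mW_s\|_F^\alpha)^{1/\alpha}\le\sum_s|w_s|\|\bm\xi_s\|_F$ applied ``only for the $\sigma_1$-contributions'', is not well defined. The random variable $\|\bm\xi_s\|_F$ has no pathwise $\sigma_0$ and $\sigma_1$ parts. If you apply the bound to all of $\bm\xi_s$, the $\sigma_0$ term becomes $\sigma_0(1-\beta)\sum_s|w_s|=\Theta(\sigma_0)$. That destroys the $(1-\beta)^{(\alpha-1)/\alpha}$ averaging the lemma needs.

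\textbf{How to make your splitting idea rigorous.} Split the random matrix itself, using an $\mathcal F_s$-measurable weight. Put $g_s=\|\nabla f(\mX_s)\|_F$ and $\lambda_s=\sigma_0/(\sigma_0+\sigma_1g_s)$. If the denominator vanishes, take $\lambda_s=1$; then $\bm\xi_s=\bm 0$ a.s.
\begin{itemize}
\item \emph{The $\sigma_0$ piece.} The matrix $\lambda_s\bm\xi_s$ is still a martingale difference. Lemma~\ref{lemma:batch_variance} and $\sigma_0^\alpha+\sigma_1^\alpha g_s^\alpha\le(\sigma_0+\sigma_1g_s)^\alpha$ give the deterministic bound $\Exp{\|\lambda_s\bm\xi_s\|_F^\alpha\mid\mathcal F_s}\le\gC_\alpha B^{1-\alpha}\sigma_0^\alpha$. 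Lemma~\ref{lemma:bound_martingle} followed by unconditional Jensen then yields $\Gamma_\alpha\sigma_0(1-\beta)^{(\alpha-1)/\alpha}$, exactly as in your computation.
\item \emph{The $\sigma_1$ piece.} The remainder needs no martingale structure. By the triangle inequality and conditional Jensen, $\Exp{\|(1-\lambda_s)\bm\xi_s\|_F\mid\mathcal F_s}\le(1-\lambda_s)\gC_\alpha^{1/\alpha}B^{-(\alpha-1)/\alpha}(\sigma_0+\sigma_1g_s)=\gC_\alpha^{1/\alpha}B^{-(\alpha-1)/\alpha}\sigma_1g_s$. Summing against $(1-\beta)|w_s|$ gives the $\sigma_1$ bracket of \eqref{eq:bound_S_k}, even without the factor $2\sqrt\pi$.
\end{itemize}
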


\begin{proof}
The case $k=0$ is immediate. Let $k\ge 1$. By $L$-smoothness,
\begin{equation*}
\| \mD_s\|_F
= 
\left\| \nabla f(\mX_{s-1})-\nabla f(\mX_s) \right\|_F  \le
L \left\| \mX_s-\mX_{s-1} \right\|_F \\
\le
L\eta\sqrt{d_0}.
\end{equation*}
Therefore,
\begin{equation*}
\beta^2\sum_{s=1}^k \beta^{k-s}\Exp{\left\| \mD_s \right\|_F}
\leq
\frac{\beta^2}{1-\beta}L\eta\sqrt{d_0}.
\end{equation*}
Let
\begin{align*}
a_0 & :=  -(1-\beta)\beta^k, \\
a_s & :=  (1-\beta)\beta^{k+1-s} \quad \text{for } 1\le s\le k-1, \\
a_k & :=  (1-\beta)(1+\beta).
\end{align*}
Applying the predictable conditional version of Lemma \ref{lemma:bound_martingle} to the martingale differences $\{a_s\bm\xi_s\}_{s=0}^k$ gives
\begin{equation*}
\Exp{\Bigg\|\sum_{s=0}^k a_s\bm\xi_s\Bigg\|_F}
 \leq 
2\sqrt{\pi}
\Exp{\left(
\sum_{s=0}^k |a_s|^\alpha
\Exp{\|\bm\xi_s\|_F^\alpha\mid \mathcal F_s}
\right)^{1/\alpha}}.
\end{equation*}
By Lemma \ref{lemma:batch_variance},
\begin{align*}	
\Exp{\Bigg\|\sum_{s=0}^k a_s\bm\xi_s\Bigg\|_F}
& \leq 
\frac{2\sqrt{\pi}\gC_\alpha^{\nicefrac{1}{\alpha}}}{B^{\frac{\alpha-1}{\alpha}}}
\Exp{\left(
\sum_{s=0}^k |a_s|^\alpha
\left(\sigma_0^\alpha+\sigma_1^\alpha\|\nabla f(\mX_s)\|_F^\alpha\right)
\right)^{1/\alpha}} \\	
& \leq 
\frac{2\sqrt{\pi}\gC_\alpha^{\nicefrac{1}{\alpha}}}{B^{\frac{\alpha-1}{\alpha}}}
\left[
\sigma_0\left(\sum_{s=0}^k |a_s|^\alpha\right)^{1/\alpha}
+ \sigma_1\sum_{s=0}^k |a_s|\Exp{\|\nabla f(\mX_s)\|_F}
\right].
\end{align*}
Moreover,
\begin{align*}
\left(\sum_{s=0}^k |a_s|^\alpha\right)^{1/\alpha}
& =
(1-\beta)
\left(
\beta^{\alpha k}
+ \sum_{s=1}^{k-1}\beta^{\alpha(k+1-s)}
+ (1+\beta)^\alpha
\right)^{1/\alpha} \\
& \leq
(1-\beta)
\left(
\sum_{j=1}^{k}\beta^j+2^\alpha
\right)^{1/\alpha}  \leq
(1-\beta)
\left(
\frac{1}{1-\beta}+2^\alpha
\right)^{1/\alpha} \\
& \leq
\Gamma_\alpha(1-\beta)^{\frac{\alpha-1}{\alpha}}.
\end{align*}
Thus,
\begin{align*}
\Exp{\Bigg\|\sum_{s=0}^k a_s\bm\xi_s\Bigg\|_F}
& \leq 
\frac{2 \sqrt{\pi} \gC_\alpha^{\nicefrac{1}{\alpha}}}{B^{\frac{\alpha-1}{\alpha}}}
\Bigg[
\Gamma_\alpha \sigma_0 (1-\beta)^{\frac{\alpha-1}{\alpha}}
+ \sigma_1 (1-\beta)\bigg((1+\beta)\Exp{\|\nabla f(\mX_k)\|_F} \\
&\quad +
\sum_{s=1}^{k-1}\beta^{k+1-s}\Exp{\|\nabla f(\mX_s)\|_F}
+ \beta^k\|\nabla f(\mX_0)\|_F\bigg)
\Bigg].
\end{align*}
Combining this bound with Lemma \ref{lemma:eps_unroll} and the drift bound above yields \eqref{eq:bound_S_k}.        
\end{proof}

\begin{lemma}\label{lemma:bound_sum_S_k}
Under the conditions of Lemma \ref{lemma:bound_S_k}, we have
\begin{align}\label{eq:bound_sum_S_k}
\sum_{k=0}^{K-1}\Exp{\left\| \mS_k \right\|_F}
& \leq
\frac{\Exp{\left\| \mS_0 \right\|_F}}{1-\beta}
+ K\frac{\beta^2}{1-\beta}L\eta\sqrt{d_0}
+ K\frac{2\sqrt{\pi}\gC_\alpha^{\nicefrac{1}{\alpha}}}{B^{\frac{\alpha-1}{\alpha}}}
\Gamma_\alpha\sigma_0(1-\beta)^{\frac{\alpha-1}{\alpha}} \notag\\
&\quad +
\frac{2\sqrt{\pi}\gC_\alpha^{\nicefrac{1}{\alpha}}}{B^{\frac{\alpha-1}{\alpha}}}
\sigma_1
\left(
\|\nabla f(\mX_0)\|_F
+ \sum_{k=0}^{K-1}\Exp{\left\| \nabla f(\mX_k) \right\|_F}
\right).	
\end{align}   
\end{lemma}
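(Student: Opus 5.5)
The plan is to sum the per-iterate bound of Lemma \ref{lemma:bound_S_k} over $k=0,\dots,K-1$ and control each of its four groups of terms separately, using only geometric-series bounds and an exchange of the order of summation.

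\textbf{Initial-bias, drift and $\sigma_0$ terms.} First, $\sum_{k=0}^{K-1}\beta^k\Exp{\|\mS_0\|_F}\le \Exp{\|\mS_0\|_F}/(1-\beta)$, which gives the first term of \eqref{eq:bound_sum_S_k}. The drift contribution $\frac{\beta^2}{1-\beta}L\eta\sqrt{d_0}$ and the $\sigma_0$ contribution $\frac{2\sqrt{\pi}\gC_\alpha^{1/\alpha}}{B^{(\alpha-1)/\alpha}}\Gamma_\alpha\sigma_0(1-\beta)^{(\alpha-1)/\alpha}$ do not depend on $k$. Summing them therefore just multiplies each by $K$.

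\textbf{$\sigma_1$ terms.} Here I would rewrite the bracket in terms of nonnegative weights on $\Exp{\|\nabla f(\mX_s)\|_F}$ and swap the order of summation. The $\|\nabla f(\mX_0)\|_F$ term carries the factor $(1-\beta)\beta^k$. Summing over $k$ gives at most $(1-\beta)\cdot\frac{1}{1-\beta}\,\|\nabla f(\mX_0)\|_F=\|\nabla f(\mX_0)\|_F$.

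For a fixed $s\ge1$, the coefficient of $\Exp{\|\nabla f(\mX_s)\|_F}$ collects two contributions:
\begin{equation*}
(1-\beta)(1+\beta)\quad\text{from } k=s, \qquad (1-\beta)\sum_{k\ge s+1}\beta^{k+1-s}\le (1-\beta)\frac{\beta^2}{1-\beta}=\beta^2 \quad\text{from } k>s.
\end{equation*}
Their total is $1-\beta^2+\beta^2=1$. The index $s=0$ is covered by the diagonal term at $k=0$. Although Lemma \ref{lemma:bound_S_k} treats $k=0$ as trivial, I would use the trivial bound $\Exp{\|\mS_0\|_F}$ there, so at most a coefficient $1$ appears for $s=0$. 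Consequently the whole $\sigma_1$ part is bounded by
\begin{equation*}
\frac{2\sqrt{\pi}\gC_\alpha^{1/\alpha}}{B^{(\alpha-1)/\alpha}}\,\sigma_1\Big(\|\nabla f(\mX_0)\|_F+\sum_{k=0}^{K-1}\Exp{\|\nabla f(\mX_k)\|_F}\Big).
\end{equation*}

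\textbf{Main obstacle.} The only delicate step is this coefficient bookkeeping: checking that the diagonal weight $(1-\beta)(1+\beta)$ together with the off-diagonal tail $\beta^2$ sums to exactly at most $1$, so that no extra $(1-\beta)^{-1}$ factor appears. All other steps are direct. Adding the four pieces gives \eqref{eq:bound_sum_S_k}.
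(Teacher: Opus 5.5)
Your proposal is correct and follows the paper's proof essentially step for step. You sum the per-iterate bound of Lemma \ref{lemma:bound_S_k}, use geometric series for the $\mS_0$ and $\|\nabla f(\mX_0)\|_F$ terms, multiply the $k$-independent drift and $\sigma_0$ terms by $K$, and bound the $\sigma_1$ weights by $(1-\beta^2)+\beta^2=1$. Your separate handling of $k=0$ is harmless: the paper simply sums the full bound at $k=0$ as well, which gives weight $1-\beta^2\le 1$ there.
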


\begin{proof}
We now sum each term of \eqref{eq:bound_S_k} for $k=0,\ldots,K-1$. Note that
\begin{align*}
\sum_{k=0}^{K-1}\beta^k \Exp{\left\| \mS_0 \right\|_F} & \leq \frac{\Exp{\left\| \mS_0 \right\|_F}}{1-\beta}, \\
\sum_{k=0}^{K-1}\frac{\beta^2}{1-\beta}L\eta\sqrt{d_0} & = K\frac{\beta^2}{1-\beta}L\eta\sqrt{d_0}, \\
\sum_{k=0}^{K-1} \frac{2\sqrt{\pi}\gC_\alpha^{\nicefrac{1}{\alpha}}}{B^{\frac{\alpha-1}{\alpha}}}
\Gamma_\alpha\sigma_0(1-\beta)^{\frac{\alpha-1}{\alpha}}
& = 
K\frac{2\sqrt{\pi}\gC_\alpha^{\nicefrac{1}{\alpha}}}{B^{\frac{\alpha-1}{\alpha}}}
\Gamma_\alpha\sigma_0(1-\beta)^{\frac{\alpha-1}{\alpha}}.	
\end{align*}
Similarly, we have
\begin{align*}
&(1-\beta)\sum_{k=0}^{K-1}
\left(
(1+\beta)\Exp{\left\|\nabla f(\mX_k)\right\|_F}
+ \sum_{s=1}^{k-1}\beta^{k+1-s}\Exp{\left\|\nabla f(\mX_s)\right\|_F}
\right) \\
&\le
\sum_{k=0}^{K-1}(1-\beta^2)\Exp{\left\|\nabla f(\mX_k)\right\|_F}
+ \beta^2\sum_{k=0}^{K-1}\Exp{\left\|\nabla f(\mX_k)\right\|_F} \\
&=
\sum_{k=0}^{K-1}\Exp{\left\|\nabla f(\mX_k)\right\|_F},	
\end{align*}
and
\[
\sum_{k=0}^{K-1}(1-\beta)\beta^k\|\nabla f(\mX_0)\|_F
\le
\|\nabla f(\mX_0)\|_F.
\]
Combining the above inequalities and using \eqref{eq:bound_S_k} gives \eqref{eq:bound_sum_S_k}.    
\end{proof}

\begin{theorem}\label{theorem:exact}
Suppose Assumptions \ref{assume:lipschitz} and \ref{assume:general_bounded_moment} hold. Then, for any $\eta>0$ and $\beta\in(0,1)$, \algname{Muon}(\ref{eq:muon_inexact}) with exact polar decomposition and
batch size
\[
B >
\left(
2\sqrt{\pi}(1+\sqrt{d_0})\gC_\alpha^{\nicefrac{1}{\alpha}}\sigma_1
\right)^{\frac{\alpha}{\alpha-1}}
\]
satisfies
\begin{align*}
\min_{0 \leq k \leq K-1}\Exp{\|\nabla f(\mX_k)\|_F}
& \leq 
\frac{f(\mX_0)-f_\star}{\eta \rho K}
+ \frac{Ld_0 \eta}{2 \rho}
+ (1+\sqrt{d_0})
\frac{\Exp{\left\| \mS_0 \right\|_F}}{(1-\beta)K\rho} \\
&\quad +
\frac{\beta^2}{(1-\beta)\rho}L\eta\sqrt{d_0}(1+\sqrt{d_0}) \\
&\quad +
(1+\sqrt{d_0})
\frac{2\sqrt{\pi}\gC_\alpha^{\nicefrac{1}{\alpha}}}{\rho B^{\frac{\alpha-1}{\alpha}}}
\Gamma_\alpha\sigma_0(1-\beta)^{\frac{\alpha-1}{\alpha}} \\
&\quad +
(1+\sqrt{d_0})
\frac{2\sqrt{\pi}\gC_\alpha^{\nicefrac{1}{\alpha}}\sigma_1\|\nabla f(\mX_0)\|_F}
{\rho B^{\frac{\alpha-1}{\alpha}}K},	
\end{align*}
\end{theorem}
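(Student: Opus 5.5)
The argument combines the summed descent inequality of Lemma~\ref{lemma:exact_descent} with the summed momentum-error bound of Lemma~\ref{lemma:bound_sum_S_k}. The one non-routine step is absorbing the $\sigma_1$ term, which feeds $\sum_k\Exp{\|\nabla f(\mX_k)\|_F}$ back into the right-hand side. Throughout, write $G \eqdef \sum_{k=0}^{K-1}\Exp{\|\nabla f(\mX_k)\|_F}$ and $c \eqdef 2\sqrt{\pi}\gC_\alpha^{\nicefrac{1}{\alpha}}/B^{\nicefrac{(\alpha-1)}{\alpha}}$.

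First, substitute \eqref{eq:bound_sum_S_k} into \eqref{eq:basic_descent_sum}. This gives
\begin{align*}
\eta G &\le f(\mX_0)-f_\star + \frac{Ld_0}{2}K\eta^2 + \eta(1+\sqrt{d_0})\Bigg[\frac{\Exp{\|\mS_0\|_F}}{1-\beta} + K\frac{\beta^2}{1-\beta}L\eta\sqrt{d_0} \\
&\quad + Kc\,\Gamma_\alpha\sigma_0(1-\beta)^{\frac{\alpha-1}{\alpha}} + c\,\sigma_1\big(\|\nabla f(\mX_0)\|_F + G\big)\Bigg].
\end{align*}

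Second, move the term $\eta(1+\sqrt{d_0})c\,\sigma_1 G$ to the left-hand side. The left side becomes $\eta G\,[1-(1+\sqrt{d_0})c\,\sigma_1] = \eta\rho G$, where $\rho = 1-(1+\sqrt{d_0})2\sqrt{\pi}\gC_\alpha^{\nicefrac{1}{\alpha}}\sigma_1/B^{\nicefrac{(\alpha-1)}{\alpha}}$. This step is where the batch-size condition enters, and it is the only real obstacle. The condition $B > \bigl(2\sqrt{\pi}(1+\sqrt{d_0})\gC_\alpha^{\nicefrac{1}{\alpha}}\sigma_1\bigr)^{\nicefrac{\alpha}{(\alpha-1)}}$ is equivalent to $B^{\nicefrac{(\alpha-1)}{\alpha}} > 2\sqrt{\pi}(1+\sqrt{d_0})\gC_\alpha^{\nicefrac{1}{\alpha}}\sigma_1$, which is exactly $\rho>0$. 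So the rearrangement preserves the inequality direction.

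The absorption also needs $G<\infty$, so that we never subtract infinities. This holds under the assumptions. The updates have bounded Frobenius norm $\eta\sqrt{d_0}$, so $\|\nabla f(\mX_k)\|_F \le \|\nabla f(\mX_0)\|_F + kL\eta\sqrt{d_0}$ deterministically. Since $\mX_0$ is fixed, each expectation is therefore finite. If $\sigma_1=0$, no absorption is needed and $\rho=1$.

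Finally, divide by $\eta\rho K$ and use $\min_{0\le k\le K-1}\Exp{\|\nabla f(\mX_k)\|_F} \le G/K$. Reading off the six terms gives exactly the stated bound:
\begin{itemize}
\item $(f(\mX_0)-f_\star)/(\eta\rho K)$ from the initial suboptimality;
\item $Ld_0\eta/(2\rho)$ from the smoothness term;
\item $(1+\sqrt{d_0})\Exp{\|\mS_0\|_F}/((1-\beta)K\rho)$ from the initial momentum error;
\item the drift term $\beta^2L\eta\sqrt{d_0}(1+\sqrt{d_0})/((1-\beta)\rho)$;
\item the $\sigma_0$ noise term with factor $(1-\beta)^{\nicefrac{(\alpha-1)}{\alpha}}$;
\item the $\sigma_1\|\nabla f(\mX_0)\|_F/(\rho B^{\nicefrac{(\alpha-1)}{\alpha}}K)$ term, with its constant $(1+\sqrt{d_0})2\sqrt{\pi}\gC_\alpha^{\nicefrac{1}{\alpha}}$.
\end{itemize}
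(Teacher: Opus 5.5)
Your proposal is correct and follows the paper's proof essentially line by line: substitute \eqref{eq:bound_sum_S_k} into \eqref{eq:basic_descent_sum}, absorb the $\sigma_1$ feedback term into the left-hand side using the batch-size condition (which is equivalent to $\rho>0$), divide by $\eta\rho K$, and bound the minimum by the average. Your extra check that $\sum_{k=0}^{K-1}\Exp{\|\nabla f(\mX_k)\|_F}<\infty$ ensures the absorption step is legitimate, and is a small point of rigor the paper leaves implicit.
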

\begin{shadedbox}
where
\[
\rho
\eqdef
1-(1+\sqrt{d_0})
\frac{2\sqrt{\pi}\gC_\alpha^{\nicefrac{1}{\alpha}}\sigma_1}
{B^{\frac{\alpha-1}{\alpha}}}
>0.
\]    
\end{shadedbox}

\begin{proof}
From \eqref{eq:basic_descent_sum} and \eqref{eq:bound_sum_S_k},
\begin{align*}
\eta\sum_{k=0}^{K-1}\Exp{\|\nabla f(\mX_k)\|_F}
& \leq 
f(\mX_0)-f_\star
+ \frac{Ld_0}{2}K\eta^2
+ \eta(1+\sqrt{d_0})
\sum_{k=0}^{K-1}\Exp{\left\|\mS_k\right\|_F} \\
& \leq 
f(\mX_0)-f_\star
+ \frac{Ld_0}{2}K\eta^2
+ \eta(1+\sqrt{d_0})
\frac{\Exp{\left\|\mS_0\right\|_F}}{1-\beta} \\
& +
K\frac{\beta^2}{1-\beta}L\eta^2\sqrt{d_0}(1+\sqrt{d_0})  +
\eta(1+\sqrt{d_0})K
\frac{2\sqrt{\pi}\gC_\alpha^{\nicefrac{1}{\alpha}}}{B^{\frac{\alpha-1}{\alpha}}}
\Gamma_\alpha\sigma_0(1-\beta)^{\frac{\alpha-1}{\alpha}} \\
& +
\eta(1+\sqrt{d_0})
\frac{2\sqrt{\pi}\gC_\alpha^{\nicefrac{1}{\alpha}}}{B^{\frac{\alpha-1}{\alpha}}}
\sigma_1
\left(
\|\nabla f(\mX_0)\|_F
+ \sum_{k=0}^{K-1}\Exp{\left\|\nabla f(\mX_k)\right\|_F}
\right).
\end{align*}
Rearranging gives
\begin{align*}
\eta\rho\sum_{k=0}^{K-1}\Exp{\|\nabla f(\mX_k)\|_F}
& \leq
f(\mX_0)-f_\star
+ \frac{Ld_0}{2}K\eta^2
+ \eta(1+\sqrt{d_0})
\frac{\Exp{\left\|\mS_0\right\|_F}}{1-\beta} \\
&+K\frac{\beta^2}{1-\beta}L\eta^2\sqrt{d_0}(1+\sqrt{d_0})  +
\eta(1+\sqrt{d_0})K
\frac{2\sqrt{\pi}\gC_\alpha^{\nicefrac{1}{\alpha}}}{B^{\frac{\alpha-1}{\alpha}}}
\Gamma_\alpha\sigma_0(1-\beta)^{\frac{\alpha-1}{\alpha}} \\
&+\eta(1+\sqrt{d_0})
\frac{2\sqrt{\pi}\gC_\alpha^{\nicefrac{1}{\alpha}}\sigma_1\|\nabla f(\mX_0)\|_F}
{B^{\frac{\alpha-1}{\alpha}}}.
\end{align*}
Dividing by $\eta\rho K$ and using
\[
\sum_{k=0}^{K-1}\Exp{\|\nabla f(\mX_k)\|_F}
\ge
K\min_{0\le k\le K-1}\Exp{\|\nabla f(\mX_k)\|_F}
\]
completes the proof.        
\end{proof}

\subsection{Lemmas for Analysis of Inexact Polar Decomposition}

Throughout this subsection, let $d_0=\min\{m,n\}$,
\begin{equation*}
\Bar{\gamma} := \max_{0\le k\le K-1}\gamma_k \qquad\text{and}\qquad
\Bar{\nu} := \max_{0\le k\le K-1}\nu_k,
\end{equation*}
and $\mS_k \eqdef \widetilde{\mM}_k-\nabla f(\mX_k)$.

\begin{lemma}
\label{lem:basic_descent_inexact}
Under Assumptions \ref{assume:lipschitz} and \ref{assume:general_inexact},
\begin{align}\label{eq:basic_descent_inexact}
\eta \left( 1 - \Bar{\gamma} \right) \sum_{k = 0}^{K-1} \Exp{\left\| \nabla f(\mX_k) \right\|_F}
& \leq 
f(\mX_0) - f_\star
+ \eta \left( 1 + \sqrt{d_0} \left( 1 + \Bar{\nu} \right) \right)
\sum_{k = 0}^{K-1} \Exp{\left\| \mS_k \right\|_F} \notag \\
&\quad
+ \frac{Ld_0}{2} \eta^2 \left( 1 + \Bar{\nu} \right)^2 K.    
\end{align}
\end{lemma}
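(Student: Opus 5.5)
The plan is to mirror the proof of Lemma~\ref{lemma:exact_descent}, replacing the exact polar factor by $\gT(\mM_k)$ and using Assumption~\ref{assume:general_inexact} in conditional expectation. The update is $\mX_{k+1}=\mX_k-\eta\gT(\mM_k)$. Since $\|\gT(\mM_k)\|_F\le\sqrt{d_0}\|\gT(\mM_k)\|_{\mathrm{op}}$, Assumption~\ref{assume:general_inexact} gives
\[
\Exp{\|\mX_{k+1}-\mX_k\|_F^2\mid\mM_k}\le \eta^2 d_0(1+\nu_k)^2\le \eta^2 d_0(1+\Bar{\nu})^2 .
\]
By $L$-smoothness (Assumption~\ref{assume:lipschitz}),
\[
f(\mX_{k+1})\le f(\mX_k)-\eta\langle\nabla f(\mX_k),\gT(\mM_k)\rangle+\tfrac{L}{2}\eta^2\|\gT(\mM_k)\|_F^2 .
\]
Taking total expectation handles the quadratic term and produces $\frac{Ld_0}{2}\eta^2(1+\Bar{\nu})^2$ per step.

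For the inner product, write $\nabla f(\mX_k)=\widetilde{\mM}_k-\mS_k$. This gives
\[
-\langle\nabla f(\mX_k),\gT(\mM_k)\rangle=-\langle\widetilde{\mM}_k,\gT(\mM_k)\rangle+\langle\mS_k,\gT(\mM_k)\rangle .
\]
Since $\widetilde{\mM}_k=(1-\beta)\mM_k$ is a positive multiple of $\mM_k$, conditioning on $\mM_k$ (and the history, which determines $\mX_k$ and $\mS_k$) turns Assumption~\ref{assume:general_inexact} into
\[
\Exp{\langle\widetilde{\mM}_k,\gT(\mM_k)\rangle\mid\mM_k}\ge(1-\gamma_k)\|\widetilde{\mM}_k\|_*\ge(1-\Bar{\gamma})\|\widetilde{\mM}_k\|_F .
\]
For the error term, duality of the nuclear and operator norms gives
\[
\langle\mS_k,\gT(\mM_k)\rangle\le\|\mS_k\|_*\|\gT(\mM_k)\|_{\mathrm{op}}\le\sqrt{d_0}\,\|\mS_k\|_F\|\gT(\mM_k)\|_{\mathrm{op}} .
\]
We then use Jensen, $\Exp{\|\gT(\mM_k)\|_{\mathrm{op}}\mid\cdot}\le(1+\nu_k)$. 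Because $\mS_k$ is measurable with respect to the conditioning, this yields $\sqrt{d_0}(1+\Bar{\nu})\|\mS_k\|_F$.

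Next, apply the reverse triangle inequality $\|\widetilde{\mM}_k\|_F\ge\|\nabla f(\mX_k)\|_F-\|\mS_k\|_F$. Multiplying by $(1-\Bar{\gamma})\le1$ gives
\[
-(1-\Bar{\gamma})\|\widetilde{\mM}_k\|_F\le-(1-\Bar{\gamma})\|\nabla f(\mX_k)\|_F+\|\mS_k\|_F .
\]
The $\|\mS_k\|_F$ coefficient is therefore at most $1+\sqrt{d_0}(1+\Bar{\nu})$. Finally, take total expectations and telescope over $k=0,\dots,K-1$ using $f(\mX_K)\ge f_\star$. This gives \eqref{eq:basic_descent_inexact}.

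The main obstacle is the conditioning. The randomness of $\gT$ (e.g.\ the sketch $\bm\Omega$) must be independent given $\mM_k$ and the past, so that conditioning on $\mM_k$ is compatible with $\mS_k$ and $\nabla f(\mX_k)$ being treated as fixed. I will interpret the conditional expectations in Assumption~\ref{assume:general_inexact} as conditional on $\sigma(\mathcal{F}_k,\mM_k)$, which is how the assumption is used for Algorithm~\ref{alg:rand_ns_update}, where fresh $\bm\Omega$ is drawn each step. The remaining subtlety is the sign handling of $(1-\Bar{\gamma})$ versus $(1-\gamma_k)$, which is harmless because the norms are nonnegative.
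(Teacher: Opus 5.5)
Your proposal is correct and follows the paper's proof essentially step for step. The paper likewise conditions on a $\sigma$-field $\mathcal{G}_k$ that contains the history up to the formation of $\mM_k$ but not the internal randomness of $\gT$. It then splits $\nabla f(\mX_k)=\widetilde{\mM}_k-\mS_k$, uses $\|\widetilde{\mM}_k\|_*\ge\|\widetilde{\mM}_k\|_F$ together with the reverse triangle inequality, and telescopes.

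The only cosmetic difference is the cross term. The paper bounds it by Frobenius Cauchy--Schwarz plus Jensen on $\|\gT(\mM_k)\|_F$. You use nuclear/operator duality with $\|\mS_k\|_*\le\sqrt{d_0}\|\mS_k\|_F$, which gives the identical factor $\sqrt{d_0}(1+\nu_k)$.
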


\begin{proof}
Let $\mathcal G_k$ denote the sigma-field generated by the algorithmic history up to the formation of $\mM_k$, but before any internal randomness used by $\gT(\mM_k)$. Then $\mX_k$, $\mM_k$, $\widetilde{\mM}_k$, and $\mS_k$ are $\mathcal G_k$-measurable. Since $\gT(\mM_k)$ is generated from $\mM_k$ and possible fresh internal randomness, Assumption \ref{assume:general_inexact} applies conditionally on $\mathcal G_k$.

By $L$-smoothness and the update rule,
\begin{align*}
\Exp{f(\mX_{k+1}) \mid \mathcal G_k}
&\le
f(\mX_k)
-\eta \Exp{\left\langle \nabla f(\mX_k), \gT(\mM_k) \right\rangle \mid \mathcal G_k}
+\frac{L\eta^2}{2}\Exp{\left\| \gT(\mM_k) \right\|_F^2 \mid \mathcal G_k} \\
&=
f(\mX_k)
-\eta \Exp{\left\langle \widetilde{\mM}_k, \gT(\mM_k) \right\rangle \mid \mathcal G_k}
+\eta \Exp{\left\langle \mS_k, \gT(\mM_k) \right\rangle \mid \mathcal G_k} \\
&\hspace{2em}
+\frac{L\eta^2}{2}\Exp{\left\| \gT(\mM_k) \right\|_F^2 \mid \mathcal G_k}.
\end{align*}
For the alignment term, using $\widetilde{\mM}_k=(1-\beta)\mM_k$ gives
\begin{align*}
\Exp{\left\langle \widetilde{\mM}_k, \gT(\mM_k) \right\rangle \mid \mathcal G_k}
&=
(1-\beta)\Exp{\left\langle \mM_k, \gT(\mM_k) \right\rangle \mid \mathcal G_k} \\
&\ge
(1-\beta)(1-\gamma_k)\|\mM_k\|_* =
(1-\gamma_k)\|\widetilde{\mM}_k\|_*.
\end{align*}
For the error term, Cauchy-Schwarz, Jensen's inequality, and Assumption \ref{assume:general_inexact} imply
\begin{align*}
\Exp{\left\langle \mS_k, \gT(\mM_k) \right\rangle \mid \mathcal G_k}
&\le
\|\mS_k\|_F
\Exp{\left\| \gT(\mM_k) \right\|_F \mid \mathcal G_k} \\
&\le
\|\mS_k\|_F
\Exp{\left\| \gT(\mM_k) \right\|_F^2 \mid \mathcal G_k}^{1/2} \le
\sqrt{d_0}(1+\nu_k)\|\mS_k\|_F.
\end{align*}
Similarly,
\[
\Exp{\left\| \gT(\mM_k) \right\|_F^2 \mid \mathcal G_k}
\le
d_0\Exp{\left\| \gT(\mM_k) \right\|_{\rm op}^2 \mid \mathcal G_k}
\le
d_0(1+\nu_k)^2.
\]
Combining these bounds,
\begin{align*}
\Exp{f(\mX_{k+1}) \mid \mathcal G_k}
&\le
f(\mX_k)
-\eta(1-\gamma_k)\|\widetilde{\mM}_k\|_*
+\eta\sqrt{d_0}(1+\nu_k)\|\mS_k\|_F
+\frac{Ld_0}{2}\eta^2(1+\nu_k)^2 \\
&\le
f(\mX_k)
-\eta(1-\gamma_k)\|\widetilde{\mM}_k\|_F
+\eta\sqrt{d_0}(1+\nu_k)\|\mS_k\|_F
+\frac{Ld_0}{2}\eta^2(1+\nu_k)^2 \\
&\le
f(\mX_k)
-\eta(1-\gamma_k)\|\nabla f(\mX_k)\|_F
+\eta\left((1-\gamma_k)+\sqrt{d_0}(1+\nu_k)\right)\|\mS_k\|_F \\
&\hspace{2em}
+\frac{Ld_0}{2}\eta^2(1+\nu_k)^2 \\
&\le
f(\mX_k)
-\eta(1-\Bar{\gamma})\|\nabla f(\mX_k)\|_F
+\eta\left(1+\sqrt{d_0}(1+\Bar{\nu})\right)\|\mS_k\|_F \\
&\hspace{2em}
+\frac{Ld_0}{2}\eta^2(1+\Bar{\nu})^2.
\end{align*}
Taking total expectation, summing over $k=0,\ldots,K-1$, and using $f(\mX_K)\ge f_\star$ gives \eqref{eq:basic_descent_inexact}.
\end{proof}

\begin{lemma}\label{lem:inexact_sum_S_k}
Under Assumptions \ref{assume:lipschitz}, \ref{assume:general_inexact}, and \ref{assume:general_bounded_moment},
\begin{align}\label{eq:inexact_sum_S_k}
\sum_{k = 0}^{K-1} \Exp{\left\| \mS_k \right\|_F}
& \leq
\frac{\Exp{\left\| \mS_0 \right\|_F}}{1 - \beta}
+ K \frac{\beta^2}{1-\beta}L\eta\sqrt{d_0}(1 + \Bar{\nu})
+ K \frac{2 \sqrt{\pi} \gC_\alpha^{1/\alpha}}{B^{(\alpha-1)/\alpha}}
\Gamma_\alpha \sigma_0(1-\beta)^{(\alpha-1)/\alpha} \notag \\
&\quad
+ \frac{2 \sqrt{\pi} \gC_\alpha^{1/\alpha}}{B^{(\alpha-1)/\alpha}}
\sigma_1
\left(
\|\nabla f(\mX_0)\|_F
+ \sum_{k = 0}^{K-1} \Exp{\left\| \nabla f(\mX_k) \right\|_F}
\right),
\end{align}
where $\Gamma_\alpha \eqdef (2^\alpha+1)^{1/\alpha}$.
\end{lemma}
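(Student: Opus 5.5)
The plan is to repeat the argument of Lemmas~\ref{lemma:bound_S_k} and~\ref{lemma:bound_sum_S_k} almost verbatim. The only change is the drift term, where the exact bound $\|\mX_s-\mX_{s-1}\|_F\le \eta\sqrt{d_0}$ must be replaced by a bound that uses the second condition of Assumption~\ref{assume:general_inexact}.

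First, the unrolling of Lemma~\ref{lemma:eps_unroll} is purely algebraic. It uses only the recursion of Lemma~\ref{lemma:scaled_recurrence} and the definitions of $\mD_k,\bm\xi_k$, so it holds unchanged for the inexact update. Thus
\[
\mS_k=\beta^k\mS_0+\beta^2\sum_{s=1}^k\beta^{k-s}\mD_s+\sum_{s=0}^k a_s\bm\xi_s ,
\]
with the same coefficients $a_s$ as before.

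Second, I would bound the drift in expectation. By $L$-smoothness, $\|\mD_s\|_F\le L\eta\|\gT(\mM_{s-1})\|_F\le L\eta\sqrt{d_0}\|\gT(\mM_{s-1})\|_{\mathrm{op}}$. Conditioning on $\mathcal G_{s-1}$ (the $\sigma$-field from the proof of Lemma~\ref{lem:basic_descent_inexact}) and applying Jensen gives
\[
\Exp{\|\gT(\mM_{s-1})\|_{\mathrm{op}}\mid\mathcal G_{s-1}}\le(1+\nu_{s-1})\le 1+\Bar\nu .
\]
Hence $\Exp{\|\mD_s\|_F}\le L\eta\sqrt{d_0}(1+\Bar\nu)$, and summing the geometric weights yields $\frac{\beta^2}{1-\beta}L\eta\sqrt{d_0}(1+\Bar\nu)$. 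Some care is needed with indices: $\mD_s$ for $s\le K-1$ involves only $\nu_0,\dots,\nu_{K-2}$, so the maximum $\Bar\nu$ covers them.

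Third, I would handle the noise term, which I expect to be the main obstacle. The sequence $\{a_s\bm\xi_s\}$ must still be a martingale difference sequence now that $\gT$ may inject its own randomness (e.g.\ the sketch $\bm\Omega$ in Algorithm~\ref{alg:rand_ns_update}). This requires choosing the filtration carefully: $\mathcal F_s$ includes all sketch randomness used up to and including iteration $s-1$, so $\mX_s$ is $\mathcal F_s$-measurable. Assumption~\ref{assume:general_bounded_moment} then gives $\Exp{\bm\xi_s\mid\mathcal F_s}=\bm 0$, and $\bm\xi_s$ is $\mathcal F_{s+1}$-measurable. Lemma~\ref{lemma:bound_martingle}, in its predictable conditional form, together with Lemma~\ref{lemma:batch_variance}, then applies exactly as in Lemma~\ref{lemma:bound_S_k}. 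Using $(x+y)^{1/\alpha}\le x^{1/\alpha}+y^{1/\alpha}$, Jensen, and $\big(\sum_s|a_s|^\alpha\big)^{1/\alpha}\le\Gamma_\alpha(1-\beta)^{(\alpha-1)/\alpha}$, this yields the same per-$k$ bound as \eqref{eq:bound_S_k}, with the drift term replaced by the one from the second step.

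Finally, I would sum over $k=0,\dots,K-1$ exactly as in Lemma~\ref{lemma:bound_sum_S_k}. The $\mS_0$ terms give $\Exp{\|\mS_0\|_F}/(1-\beta)$. The drift and $\sigma_0$ terms contribute a factor $K$. The $\sigma_1$ terms telescope via $(1-\beta)(1+\beta)+\beta^2=1$ and $\sum_k(1-\beta)\beta^k\le1$, which gives the last term of \eqref{eq:inexact_sum_S_k}.
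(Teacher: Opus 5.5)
Your proposal is correct and follows essentially the same route as the paper's proof. You reuse the unrolling of Lemma~\ref{lemma:eps_unroll}, replace the exact drift bound by $\Exp{\|\mD_s\|_F}\le L\eta\sqrt{d_0}(1+\Bar{\nu})$ via Jensen and the second condition of Assumption~\ref{assume:general_inexact}, and then apply the same martingale-noise estimate and geometric-series summation as in Lemmas~\ref{lemma:bound_S_k} and~\ref{lemma:bound_sum_S_k}; the only differences are cosmetic (you pass through $\|\cdot\|_{\mathrm{op}}$ before applying Jensen, and you state explicitly that the filtration must contain the internal randomness of $\gT$, which the paper leaves implicit).
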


\begin{proof}
Let
\[
\mD_s \eqdef \nabla f(\mX_{s-1})-\nabla f(\mX_s),
\qquad
\bm\xi_s \eqdef \mG_s-\nabla f(\mX_s).
\]
By Lemma \ref{lemma:eps_unroll}, for every $k\ge 1$,
\[
\mS_k
=
\beta^k \mS_0
+\beta^2\sum_{s=1}^k \beta^{k-s}\mD_s
+(1-\beta)\left((1+\beta)\bm\xi_k
+\sum_{s=1}^{k-1}\beta^{k+1-s}\bm\xi_s
-\beta^k\bm\xi_0\right).
\]
For $s\ge 1$, $L$-smoothness and the update $\mX_s=\mX_{s-1}-\eta\gT(\mM_{s-1})$ give
\begin{equation*}
\Exp{\left\| \mD_s \right\|_F} \le
L\Exp{\left\| \mX_s-\mX_{s-1} \right\|_F} =
L\eta\Exp{\left\| \gT(\mM_{s-1}) \right\|_F} \le
L\eta\sqrt{d_0}(1+\Bar{\nu}),
\end{equation*}
where the last step uses Jensen's inequality and Assumption \ref{assume:general_inexact}. Hence
\[
\beta^2\sum_{s=1}^k \beta^{k-s}\Exp{\left\| \mD_s \right\|_F}
\le
\frac{\beta^2}{1-\beta}L\eta\sqrt{d_0}(1+\Bar{\nu}).
\]
Combining this drift bound with the same martingale-noise estimate used in Lemma \ref{lemma:bound_S_k} yields
\begin{align*}
\Exp{\left\| \mS_k \right\|_F}
&\le
\beta^k \Exp{\left\| \mS_0 \right\|_F}
+\frac{\beta^2}{1-\beta}L\eta\sqrt{d_0}(1+\Bar{\nu}) 
+\frac{2 \sqrt{\pi} \gC_\alpha^{1/\alpha}}{B^{(\alpha-1)/\alpha}}
\Bigg[
\Gamma_\alpha \sigma_0(1-\beta)^{(\alpha-1)/\alpha} \\
&\hspace{-2.5em}
+\sigma_1(1-\beta)
\left(
(1+\beta)\Exp{\left\|\nabla f(\mX_k)\right\|_F}
+\sum_{s=1}^{k-1}\beta^{k+1-s}\Exp{\left\|\nabla f(\mX_s)\right\|_F}
+\beta^k\|\nabla f(\mX_0)\|_F
\right)
\Bigg].
\end{align*}
Summing this inequality over $k=0,\ldots,K-1$ and using the same geometric-series bounds as in Lemma \ref{lemma:bound_sum_S_k} proves \eqref{eq:inexact_sum_S_k}.
\end{proof}

\section{Missing Proofs of Section \ref{section:main_results}}

\subsection{Proof of Lemma \ref{lemma:scaled_recurrence}}

\lemmascaledrecurrence*

\begin{proof}
From $\widetilde{\mC}_k \eqdef (1-\beta) \mC_k$, we obtain
\begin{equation*}
\widetilde{\mC}_k = (1-\beta)\mC_k = (1-\beta)(\beta\mC_{k-1}+\mG_k) = \beta\widetilde{\mC}_{k-1} + (1-\beta)\mG_k,
\end{equation*}
and from $\widetilde{\mM}_k \eqdef (1-\beta) \mM_k$, we get
\begin{equation*}
\widetilde{\mM}_k = (1-\beta)\mM_k = (1-\beta)(\beta\mC_k+\mG_k) = \beta\widetilde{\mC}_k + (1-\beta)\mG_k.
\end{equation*}
For $k\ge 1$, we want to express $\widetilde{\mM}_k$ in terms of $\widetilde{\mM}_{k-1}$. Note that
\begin{align*}
\widetilde{\mM}_k &= \beta\widetilde{\mC}_k + (1-\beta)\mG_k = \beta(\beta\widetilde{\mC}_{k-1} + (1-\beta)\mG_k) + (1-\beta)\mG_k\\
&= \beta^2\widetilde{\mC}_{k-1} + (1-\beta)(1+\beta)\mG_k.
\end{align*}
Also $\widetilde{\mM}_{k-1}=\beta\widetilde{\mC}_{k-1}+(1-\beta)\mG_{k-1}$, so
\[
\beta^2\widetilde{\mC}_{k-1}
=
\beta\widetilde{\mM}_{k-1}-\beta(1-\beta)\mG_{k-1}.
\]
Therefore
\begin{equation*}
\widetilde{\mM}_k = \beta\widetilde{\mM}_{k-1} + (1-\beta)\big((1+\beta)\mG_k-\beta\mG_{k-1}\big).	
\end{equation*}	
This completes the proof.
\end{proof}

\subsection{Proof of Theorem \ref{corollary:exact_with_alpha}}

\corollaryexactwithalpha*

\begin{proof}
Set $\hat{\beta} \eqdef 1 - \beta$. Then $\hat{\beta} = K^{-\nicefrac{\alpha}{3\alpha-2}}$, and Theorem \ref{theorem:exact}, with $\rho=\rho_{\rm ex}$, gives
\begin{align*}
\min_{0 \leq k \leq K-1} \Exp{\|\nabla f(\mX_k)\|_F}
& \leq 
\frac{f(\mX_0)-f_\star}{\eta \rho_{\rm ex} K}
+ \frac{Ld_0 \eta}{2 \rho_{\rm ex}}
+ \left( 1+\sqrt{d_0} \right)
\frac{\Exp{\left\| \mS_0 \right\|_F}}{\hat{\beta} K \rho_{\rm ex}} \\
&\quad
+ \frac{L \eta \sqrt{d_0} (1 + \sqrt{d_0})}{\hat{\beta} \rho_{\rm ex}} 
+ \left( 1+\sqrt{d_0} \right)
\frac{2 \sqrt{\pi} \gC_\alpha^{\nicefrac{1}{\alpha}}}{\rho_{\rm ex} B^{\frac{\alpha-1}{\alpha}}}
\Gamma_\alpha \sigma_0 \hat{\beta}^{\frac{\alpha-1}{\alpha}} \\
&\quad
+ \left( 1+\sqrt{d_0} \right)
\frac{2 \sqrt{\pi} \gC_\alpha^{\nicefrac{1}{\alpha}} \sigma_1 \|\nabla f(\mX_0)\|_F}
{\rho_{\rm ex} B^{\frac{\alpha-1}{\alpha}} K}.
\end{align*}
Substituting $\eta = K^{-\nicefrac{2\alpha-1}{3\alpha-2}}$ and $\hat{\beta}=K^{-\nicefrac{\alpha}{3\alpha-2}}$, and using the convention that $\gO(\cdot)$ hides constants depending only on $\alpha$ and $d_0$, yields
\begin{align*}
\min_{0 \leq k \leq K-1} \Exp{\|\nabla f(\mX_k)\|_F}
& \leq \gO \left(
\frac{f(\mX_0)-f_\star}{\rho_{\rm ex} K^{\nicefrac{\alpha - 1}{3\alpha - 2}}}
+ \frac{L}{\rho_{\rm ex} K^{\nicefrac{2\alpha - 1}{3\alpha - 2}}}
+ \frac{\Exp{\| \mS_0 \|_F}}{\rho_{\rm ex} K^{\nicefrac{2\alpha - 2}{3\alpha - 2}}} \right. \\
&\quad \left.
+ \frac{L}{\rho_{\rm ex} K^{\nicefrac{\alpha - 1}{3 \alpha - 2}}}
+ \frac{\sigma_0}{\rho_{\rm ex} B^{\frac{\alpha - 1}{\alpha}} K^{\nicefrac{\alpha - 1}{3 \alpha - 2}}}
+ \frac{\sigma_1 \| \nabla f(\mX_0)\|_F}{\rho_{\rm ex} B^{\frac{\alpha - 1}{\alpha}} K}
\right).	
\end{align*}
This completes the proof.	
\end{proof}

\subsection{Proof of Theorem \ref{theorem:inexact}}

\theoreminexact*

\begin{proof}
By Lemma \ref{lem:basic_descent_inexact},
\begin{align*}
\eta \left( 1 - \Bar{\gamma} \right)
\sum_{k = 0}^{K-1} \Exp{\left\| \nabla f(\mX_k) \right\|_F}
& \leq
f(\mX_0) - f_\star
+ \eta \left( 1 + \sqrt{d_0} \left( 1 + \Bar{\nu} \right) \right)
\sum_{k = 0}^{K-1} \Exp{\left\| \mS_k \right\|_F} \\
&\quad
+ \frac{Ld_0}{2} \eta^2 \left( 1 + \Bar{\nu} \right)^2 K .
\end{align*}
Applying Lemma \ref{lem:inexact_sum_S_k} gives
\begin{align*}
&\hskip-1cm \eta \left( 1 - \Bar{\gamma} \right)
\sum_{k = 0}^{K-1} \Exp{\left\| \nabla f(\mX_k) \right\|_F} \\
& \leq
f(\mX_0) - f_\star
+ \frac{Ld_0}{2} \eta^2 \left( 1 + \Bar{\nu} \right)^2 K 
+ \eta \left( 1 + \sqrt{d_0} \left( 1 + \Bar{\nu} \right) \right)
\frac{\Exp{\left\| \mS_0 \right\|_F}}{1 - \beta} \\
&\quad
+ K \frac{\beta^2}{1-\beta} L \eta^2 \sqrt{d_0} (1 + \Bar{\nu})
\left( 1 + \sqrt{d_0} \left( 1 + \Bar{\nu} \right) \right) \\
&\quad
+ K
\frac{2 \sqrt{\pi} \gC_\alpha^{\nicefrac{1}{\alpha}}}{B^{\nicefrac{\alpha-1}{\alpha}}}
\Gamma_\alpha \sigma_0(1-\beta)^{\nicefrac{\alpha-1}{\alpha}}
\eta \left( 1 + \sqrt{d_0} \left( 1 + \Bar{\nu} \right) \right) \\
&\quad
+
\frac{2 \sqrt{\pi} \gC_\alpha^{\nicefrac{1}{\alpha}}}{B^{\nicefrac{\alpha-1}{\alpha}}}
\sigma_1 \eta
\left( 1 + \sqrt{d_0} \left( 1 + \Bar{\nu} \right) \right)
\left(
\|\nabla f(\mX_0)\|_F
+
\sum_{k = 0}^{K-1} \Exp{\left\| \nabla f(\mX_k) \right\|_F}
\right).
\end{align*}
Rearranging the last term yields
\begin{align*}
&\hskip-1cm\eta
\left(
1 - \Bar{\gamma}
-
\left( 1+\sqrt{d_0} \left( 1 + \Bar{\nu} \right) \right)
\frac{2 \sqrt{\pi} \gC_\alpha^{\nicefrac{1}{\alpha}} \sigma_1}
{B^{\nicefrac{\alpha-1}{\alpha}}}
\right)
\sum_{k=0}^{K-1}\Exp{\|\nabla f(\mX_k)\|_F} \\
&\leq
f(\mX_0)-f_\star
+ \frac{Ld_0}{2} \eta^2 \left( 1 + \Bar{\nu} \right)^2 K
+ \eta \left( 1 + \sqrt{d_0} \left( 1 + \Bar{\nu} \right) \right)
\frac{\Exp{\left\| \mS_0 \right\|_F}}{1 - \beta} \\
&\quad
+ K \frac{\beta^2}{1-\beta} L \eta^2 \sqrt{d_0} (1 + \Bar{\nu})
\left( 1 + \sqrt{d_0} \left( 1 + \Bar{\nu} \right) \right) \\
&\quad
+ K
\frac{2 \sqrt{\pi} \gC_\alpha^{\nicefrac{1}{\alpha}}}{B^{\nicefrac{\alpha-1}{\alpha}}}
\Gamma_\alpha \sigma_0(1-\beta)^{\nicefrac{\alpha-1}{\alpha}}
\eta \left( 1 + \sqrt{d_0} \left( 1 + \Bar{\nu} \right) \right) \\
&\quad
+
\frac{2 \sqrt{\pi} \gC_\alpha^{\nicefrac{1}{\alpha}}}{B^{\nicefrac{\alpha-1}{\alpha}}}
\sigma_1 \eta
\left( 1 + \sqrt{d_0} \left( 1 + \Bar{\nu} \right) \right)
\|\nabla f(\mX_0)\|_F .
\end{align*}
By the assumed lower bound on $B$, the coefficient in parentheses is
$\rho>0$. Dividing by $\eta\rho K$ and using
\[
\sum_{k=0}^{K-1}\Exp{\|\nabla f(\mX_k)\|_F}
\ge
K \min_{0 \leq k \leq K-1}\Exp{\|\nabla f(\mX_k)\|_F}
\]
gives
\begin{align*}
\min_{0 \leq k \leq K-1} \Exp{\|\nabla f(\mX_k)\|_F}
&\le
\frac{f(\mX_0)-f_\star}{\eta \rho K}
+ \frac{Ld_0}{2 \rho} \eta \left( 1 + \Bar{\nu} \right)^2 
+ \left( 1+\sqrt{d_0} (1 + \Bar{\nu}) \right)
\frac{\Exp{\left\| \mS_0 \right\|_F}}{(1 - \beta) K \rho} \\
&\quad
+ \frac{\beta^2}{(1 - \beta) \rho}
L \eta \sqrt{d_0} (1 + \Bar{\nu})
\left( 1 + \sqrt{d_0} (1 + \Bar{\nu}) \right) \\
&\quad
+ \left( 1+\sqrt{d_0} (1 + \Bar{\nu}) \right)
\frac{2 \sqrt{\pi} \gC_\alpha^{\nicefrac{1}{\alpha}}}{\rho B^{\nicefrac{\alpha-1}{\alpha}}}
\Gamma_\alpha \sigma_0(1-\beta)^{\nicefrac{\alpha-1}{\alpha}} \\
&\quad
+ \left( 1+\sqrt{d_0} (1 + \Bar{\nu}) \right)
\frac{2 \sqrt{\pi} \gC_\alpha^{\nicefrac{1}{\alpha}} \sigma_1 \|\nabla f(\mX_0)\|_F}
{\rho B^{\nicefrac{\alpha-1}{\alpha}} K}.
\end{align*}
The stated bound follows by absorbing constants depending only on $\alpha$ and
$d_0$ into $\gO(\cdot)$.
\end{proof}

\subsection{Proof of Corollary \ref{corollary:inexact_without_alpha}}

\corollaryinexactwithoutalpha*

\begin{proof}
Since $\sigma_1=0$, the batch-size condition in Theorem \ref{theorem:inexact} is vacuous for any $B\ge1$, and $\rho=1-\Bar{\gamma}=\rho_0$. Applying Theorem \ref{theorem:inexact} gives
\begin{align*}
\min_{0 \leq k \leq K-1} \Exp{\|\nabla f(\mX_k)\|_F}
\leq \gO \Bigg(
&\frac{f(\mX_0)-f_\star}{\rho_0 \eta K}
+ \frac{L\eta(1+\Bar{\nu})^2}{\rho_0}
+ \frac{(1+\Bar{\nu})\Exp{\| \mS_0 \|_F}}{\rho_0(1-\beta)K} \\
&+ \frac{\beta^2 L\eta(1+\Bar{\nu})^2}{\rho_0(1-\beta)}
+ \frac{(1+\Bar{\nu})\sigma_0(1-\beta)^{\frac{\alpha-1}{\alpha}}}
{\rho_0 B^{\frac{\alpha-1}{\alpha}}}
\Bigg).
\end{align*}
Substituting $\eta=K^{-3/4}$ and $1-\beta=K^{-1/2}$ yields
\begin{align*}
\min_{0 \leq k \leq K-1} \Exp{\|\nabla f(\mX_k)\|_F}
\leq \gO \Bigg(
&\frac{f(\mX_0)-f_\star}{\rho_0 K^{\frac{1}{4}}}
+ \frac{L(1+\Bar{\nu})^2}{\rho_0 K^{\frac{3}{4}}}
+ \frac{(1+\Bar{\nu})\Exp{\| \mS_0 \|_F}}{\rho_0\sqrt{K}} \\
&+ \frac{L(1+\Bar{\nu})^2}{\rho_0 K^{\frac{1}{4}}}
+ \frac{(1+\Bar{\nu})\sigma_0}
{\rho_0 B^{\frac{\alpha - 1}{\alpha}} K^{\frac{\alpha - 1}{2 \alpha}}}
\Bigg),
\end{align*}
where $\beta^2\le1$ is absorbed into the constant. This proves the claim.
\end{proof}

\section{Experiment Details and Ablation Study}\label{app:expdetails}

This appendix describes the implementation, optimizer hyperparameters, and training schedules used in the experiments of Section~\ref{subsec:nanogpt}~and~\ref{subsec:cifar10}. We use the \texttt{modded-nanogpt} speedrun codebase of~\cite{Jordan2024Modded} and the CIFAR-10 Airbench codebase of~\cite{Jordan2024Cifar94}. We modify the orthogonalization methods inside \algname{Muon} to expose the inexact solver and our randomized low-rank projection. All data pipelines, model architecture, and learning-rate schedules are preserved from the original code. All experiments and hyperparameter tuning were conducted using four H100 GPUs, with each trial taking approximately 15-20 minutes.

\subsection{Randomized Kaczmarz-Style Sparse Sketching}\label{app:kaczmarz}
The randomized Kaczmarz-style sparse sketch~\cite{Drineas2006Fast, Strohmer2008Randomized} provides a more computationally efficient alternative to the dense Gaussian sketch $\bm\Omega \sim \gN(0,1)^{n \times \ell}$ used in Algorithm~\ref{alg:rand_ns_update}. The resulting sketching procedure is summarized in Algorithm~\ref{alg:kaczmarz_rand_ns_update}, where Steps~1--3 replace the Gaussian sketch construction in Step~1 of Algorithm~\ref{alg:rand_ns_update}.
\begin{algorithm}[ht]
	\caption{Lifted Randomized Kaczmarz-Style Sparse Sketching Polar Decomposition}\label{alg:kaczmarz_rand_ns_update}
	\begin{algorithmic}[1]
		\REQUIRE Matrix $\mM \in \mathbb{R}^{m \times n}$ with rank $r$, target rank $1 \leq s < r$, oversampling parameter $p \ge 2$ with $\ell=s+p\le \min(m,n)$, power iteration $h \ge 0$, and Newton-Schulz steps $q \ge 0$.
		
		\begin{phasebox}{phasepink}{Project Down}	
			\STATE Set $\ell := s+p$ and compute column sampling probabilities
			\[
			\pi_j := \frac{\|\mM_{:,j}\|_2^2}{\|\mM\|_F^2},
			\qquad j=1,\ldots,n
			\]
			\STATE Sample $\ell$ indices $i_1,\ldots,i_\ell$ i.i.d.\ from $\bm\pi$ with replacement
			\STATE Set
			\[
			\bm\Omega_{:,k}
			:=
			\frac{e_{i_k}}{\sqrt{\ell\,\pi_{i_k}}},
			\qquad k=1,\ldots,\ell,
			\]
			where $e_j$ is the $j$-th canonical basis vector
			\STATE Form $\mY := (\mM \mM^\top)^h \mM \bm\Omega$, and compute orthonormal basis $\mQ := \operatorname{orth}(\mY)$ 
			\STATE Set $\mB := \mQ^\top \mM \in \mathbb{R}^{\ell \times n}$
		\end{phasebox}
		\begin{phasebox}{phasegreen}{Iterative \\ Approximation}
			\STATE Set $\mZ_0 := \nicefrac{\mB}{\| \mB\|_{\mathrm{op}}}$
			\FOR{$t = 0, 1,\dots,q-1$}
			\STATE $ \mZ_{t+1} := \varphi \left( \mZ_t \right)$
			\ENDFOR
		\end{phasebox}
		\begin{phasebox}{phasepink}{Project Up}
			\STATE \textbf{Return} $\gT_{h,q}( \mM; \bm\Omega) := \mQ \mZ_q$
		\end{phasebox}
	\end{algorithmic}			
\end{algorithm}

\subsection{Pretraining of nanoGPT on FineWeb}\label{app:expdetails_nanogpt}

\paragraph{Inexact solver and randomized projection hyperparameters.}
For randomized \algname{Muon}, after Bayesian hyperparameter tuning, we implement Algorithm~\ref{alg:rand_ns_update} with the quintic-theoretical Newton-Schulz polynomial, using $q = 7$ inner iterations together with a randomized projection of target rank $s = 200$, oversampling parameter $p = 10$, and $h = 1$ power iterations. These are then applied to different \algname{Muon} variants. 

\paragraph{Hyperparameters and parameter routing.}
Following~\cite{jordan2024muon,amsel2025polar}, all \algname{Muon} variants route the embedding and language-model-head layers through \algname{AdamW} and apply \algname{Muon} to the remaining matrix-shaped parameters. The \algname{AdamW} and \algname{SGD}-Nesterov baselines instead apply their respective optimizer to every parameter. The \algname{Muon} learning rate and momentum are tuned via Bayesian optimization, yielding learning rate around $0.0325$ and momentum around $\beta = 0.9665$; the \algname{SGD}-Nesterov baseline uses learning rate $10^{-4}$ with Nesterov momentum $0.9$, and the \algname{AdamW} baseline uses learning rate around $0.0018$. All remaining settings follow the \texttt{modded-nanogpt} speedrun configuration of~\cite{Jordan2024Modded}: training sequence length $2048$ tokens, the three-stage training schedule, and a total of $1490$ training iterations. Each configuration is averaged over $5$ random seeds, and validation perplexity is computed on a held-out shard of $10{,}485{,}760$ FineWeb tokens. For the inexact-solver ablation in Figure~\ref{fig:nanogpt_abl_e1_solvers}, the quintic-empirical solver uses coefficients $(a, b, c) = (3.4445, -4.7750, 2.0315)$ at every step, while the PolarExpress solver uses iteration-dependent coefficients $\{(a_t, b_t, c_t)\}_{t=1}^{9}$ given by
\[
\begin{aligned}
	&(8.1566, -22.4833, 15.8788),\quad (4.0429, -2.8089, 0.5000),\quad (3.8917, -2.7725, 0.5061),\\
	&(3.2858, -2.3681, 0.4645),\quad (2.3005, -1.6112, 0.3833),\quad (1.8631, -1.2042, 0.3422),\\
	&(1.8383, -1.1779, 0.3397),\quad (1.8382, -1.1779, 0.3396),\quad (1.8750, -1.2500, 0.3750).
\end{aligned}
\]

\subsection{CNN on CIFAR-10}\label{app:expdetails_cifar10}

\paragraph{Inexact solver and randomized projection hyperparameters.}
After hyperparameter tuning, we select the quintic-theoretical Newton-Schulz solver with $q = 7$ inner iterations and a randomized projection of target rank $s = 128$, oversampling parameter $p = 10$, and $h = 2$ power iterations. These are then applied to different \algname{Muon} variants. 

\paragraph{Hyperparameters and parameter routing.}
Following the parameter-routing convention of the Airbench codebase~\citep{Jordan2024Cifar94}, all \algname{Muon} variants apply \algname{Muon} to the 4D convolutional filter weights and route the remaining parameters through \algname{SGD} with Nesterov momentum, whereas the \algname{AdamW} and \algname{SGD} baselines apply their respective optimizer to every parameter. After hyperparameter tuning, we use a batch size of $500$ for the main comparison reported in Table~\ref{tab:cifar10_e5} and Figure~\ref{fig:cifar10_e5}. Each configuration is averaged over $50$ random seeds. For the inexact-solver ablation in Figure~\ref{fig:abl_e1_solvers}, the quintic-empirical solver uses coefficients $(a, b, c) = (3.4445, -4.7750, 2.0315)$ at every step, while the PolarExpress solver uses iteration-dependent coefficients $\{(a_t, b_t, c_t)\}_{t=1}^{9}$ given by
\[
\begin{aligned}
	&(8.2872, -23.5959, 17.3004),\quad (4.1071, -2.9478, 0.5448),\quad (3.9487, -2.9089, 0.5518),\\
	&(3.3184, -2.4885, 0.5100),\quad (2.3007, -1.6689, 0.4188),\quad (1.8913, -1.2680, 0.3768),\\
	&(1.8750, -1.2500, 0.3750),\quad (1.8750, -1.2500, 0.3750),\quad (1.8750, -1.2500, 0.3750).
\end{aligned}
\]

\subsection{nanoGPT Benchmark Ablation Study.}\label{app:nano_ablation}
To assess the sensitivity of randomized \algname{Muon} with Nesterov's momentum to its hyperparameters on the nanoGPT benchmark, we present further ablation results in Figure~\ref{fig:nanogpt_ablations}. As shown in Figure~\ref{fig:nanogpt_abl_e1_solvers}, three of the four inexact solvers (Quintic theoretical, PolarExpress, and Quintic empirical) trace essentially identical convergence curves, while Cubic lags slightly behind throughout training. Figure~\ref{fig:nanogpt_abl_e4_seqlen} compares three sequence lengths ($1024$, $2048$, $4096$): the two shorter sequences reach similar final perplexity while $4096$ ends visibly higher. Because longer sequences process proportionally more tokens per step, they stop in fewer total steps.

\begin{figure}[t]
	\centering
	\begin{subfigure}[t]{0.48\textwidth}
		\centering
		\includegraphics[width=\linewidth]{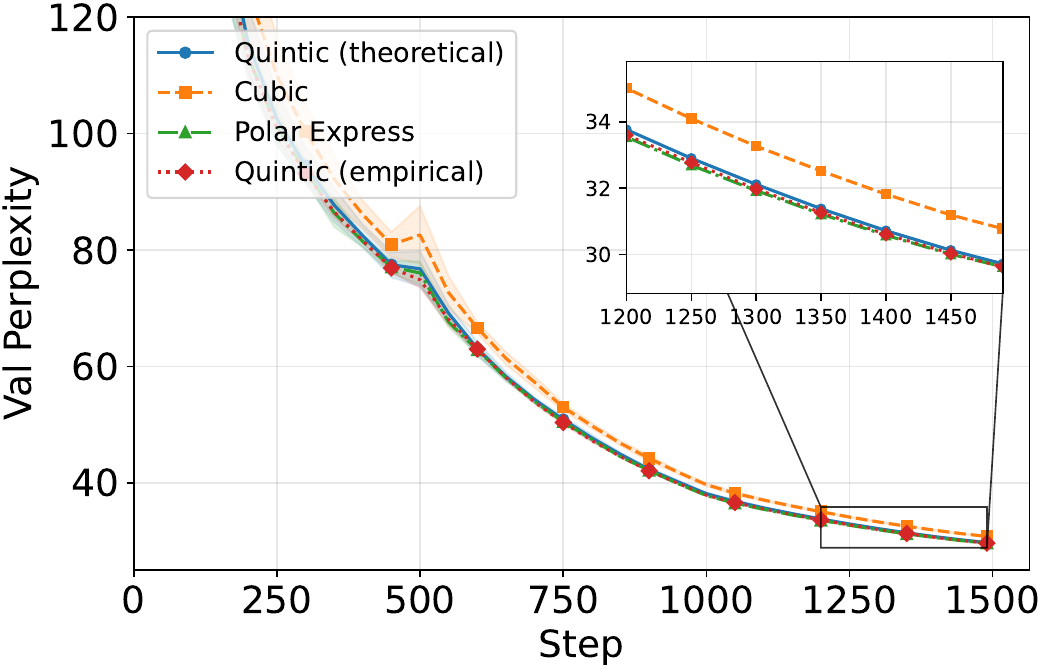}
		\caption{Inexact solver}\label{fig:nanogpt_abl_e1_solvers}
	\end{subfigure}\hfill
	\begin{subfigure}[t]{0.48\textwidth}
		\centering
		\includegraphics[width=\linewidth]{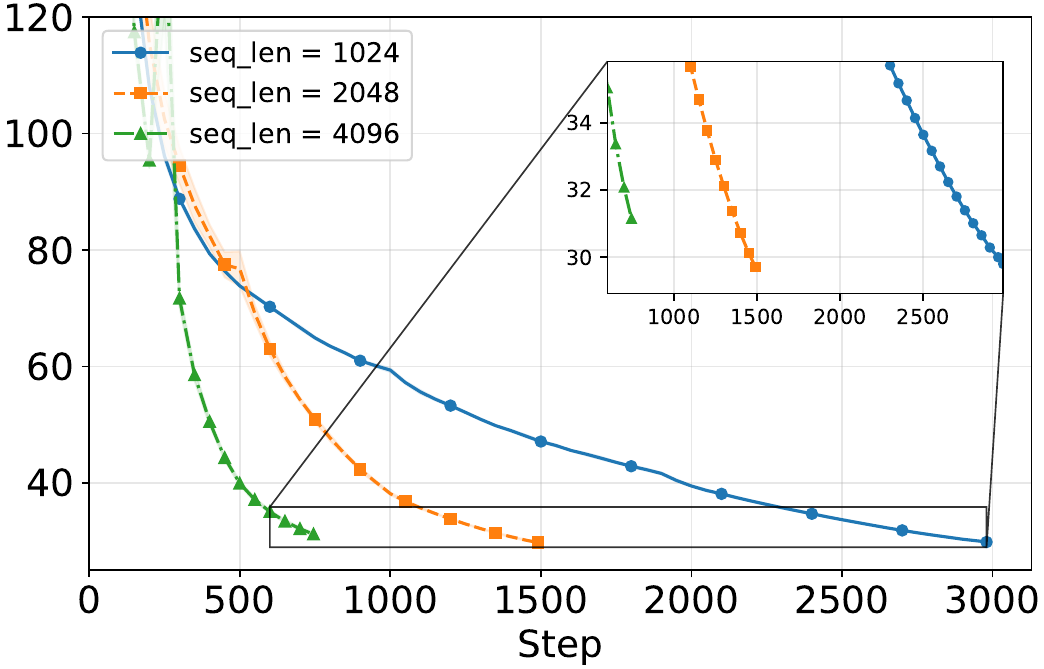}
		\caption{Sequence length}\label{fig:nanogpt_abl_e4_seqlen}
	\end{subfigure}
	\caption{nanoGPT ablations for randomized \algname{Muon} with Nesterov momentum: validation perplexity versus training steps when each hyperparameter is varied in turn and the others are held at the default configuration. Each curve is the mean over $5$ random seeds. The insets zoom into the final iterations for a clearer comparison.}
	\label{fig:nanogpt_ablations}
\end{figure}

\begin{figure}[t]
	\centering
	\begin{subfigure}[t]{0.48\textwidth}
		\centering
		\includegraphics[width=\linewidth]{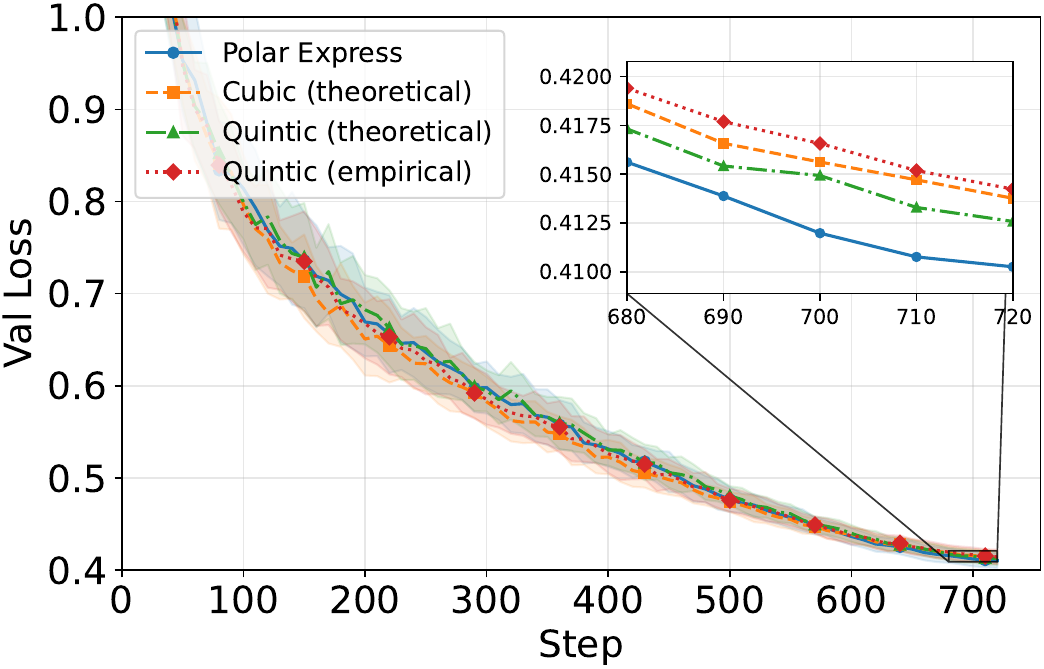}
		\caption{Inexact solver}\label{fig:abl_e1_solvers}
	\end{subfigure}\hfill
	\begin{subfigure}[t]{0.48\textwidth}
		\centering
		\includegraphics[width=\linewidth]{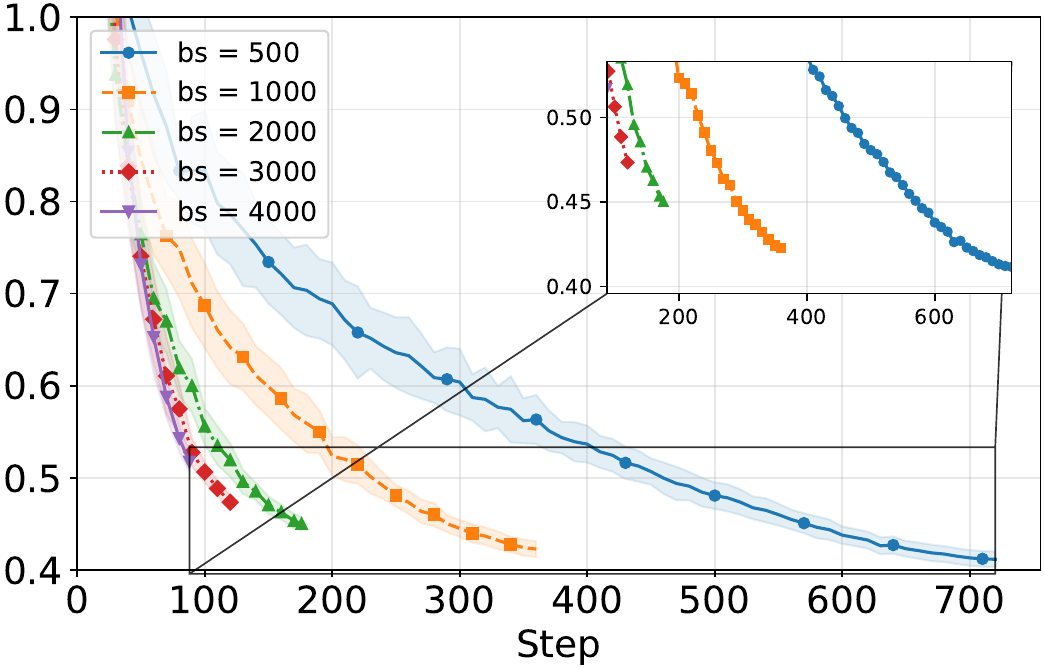}
		\caption{Batch size}\label{fig:abl_e4_bs}
	\end{subfigure}
	\caption{CIFAR-10 ablations for randomized \algname{Muon} with Nesterov momentum: validation loss versus training steps when each hyperparameter is varied in turn and the others are held at the default configuration. Each curve is the mean over $50$ random seeds. The insets zoom into the final iterations for a clearer comparison.}
	\label{fig:cifar10_ablations}
\end{figure}

\subsection{CIFAR-10 Benchmark Ablation Study.}\label{app:cifar_ablation}
We also investigate the sensitivity of randomized \algname{Muon} with Nesterov's momentum to its hyperparameters on the CIFAR-10 benchmark; we present further ablation results in Figure~\ref{fig:cifar10_ablations}. As shown in Figure~\ref{fig:abl_e1_solvers}, the four inexact solvers achieve comparable performance on this small-scale CNN: PolarExpress attains the lowest final validation loss with Quintic (theoretical) close behind, and their convergence trajectories are nearly indistinguishable. Figure~\ref{fig:abl_e4_bs} shows that smaller batch sizes reach a lower final validation loss but at a slower per-step rate, indicating that the additional update steps afforded by small batches help locate better minima at the cost of slower per-step progress.

\section{Limitations}\label{app:limitation}
The inexact-polar theory depends on the abstract quantities $\gamma_k$ and $\nu_k$, but we do not provide an adaptive rule for choosing the number of polar iterations $q$ during training. 
Moreover, our analysis of randomized polar decomposition is primarily developed for Gaussian sketches; extending the theory to sparse or Kaczmarz-style sketches remains an open problem.
The current results also do not optimize the choice of the rank $s$, the oversampling parameter $p$, or the number of power iterations $h$, all of which could be selected adaptively in practice.


\end{document}